\documentclass[letterpaper,12pt]{extarticle}
\usepackage[margin=1.4in,letterpaper]{geometry} 
\usepackage[dvips]{graphicx}
\usepackage{fancyhdr}
\usepackage{color}
\usepackage{theoremref, hyperref}
\usepackage{amsmath, amsthm}
\usepackage{amssymb}
\usepackage{tikz}
\usepackage{bm}
\usepackage{url}
\usepackage{latexsym}
\usepackage{arydshln}

 \numberwithin{equation}{section}

\definecolor{darkcyan}{rgb}{0.0, 0.55, 0.55}

\unitlength1mm


\newcommand{\Hil}[0]{
\mathcal{H}
}

\newcommand{\Rd}[0]{
{\mathbb{R}^d}
}

\newcommand{\B}[0]{{\mathcal{B}}}

\newcommand{\R}[0]{{\mathcal{R}}}
\newcommand{\G}[0]{{\mathcal{G}}}
\newcommand{\N}[0]{{\mathcal{N}}}
\newcommand{\A}[0]{{\mathcal{A}}}
\newcommand{\J}[0]{{\mathcal{J}}}
\newcommand{\MS}[0]{{\mathcal{S}}}
\newcommand{\C}[0]{{\mathcal{C}}}
\newcommand{\K}[0]{{\mathcal{K}}}

\newtheorem{theorem}{Theorem}[section]
\newtheorem{definition}[theorem]{Definition}
\newtheorem{proposition}[theorem]{Proposition}
\newtheorem{lemma}[theorem]{Lemma}
\newtheorem{corollary}[theorem]{Corollary}
\newtheorem{remark}[theorem]{Remark}
\newtheorem{ex}[theorem]{Example}
\newtheorem{conj.}[theorem]{Conjecture}
\newtheorem{Bsp.}[theorem]{Example}




\begin{document}
%
%
\title{\bf\vspace{-10
pt} 
Localization of operator-valued frames
}
%
%
\author{L. Köhldorfer and P. Balazs}

%
%
\date{}

%
%
\maketitle

\begin{abstract}
{We introduce a localization concept for operator-valued frames, where the quality of localization is measured by the associated operator-valued Gram matrix belonging to some suitable Banach algebra. We prove that intrinsic localization of an operator-valued frame is preserved by its canonical dual. Moreover, we show that the series associated to the perfect reconstruction of an operator-valued frame converges not only in the underlying Hilbert space, but also in a whole class of associated (quasi-)Banach spaces. Finally, we apply our results to irregular Gabor g-frames.}
\end{abstract}

\section{Introduction}

Understanding complicated objects using simpler and easier-to-handle building blocks is an old and established concept in mathematics. In Hilbert space the most common systems of building blocks are orthonormal bases, which allow a unique representation of any vector. However, both in theory and in practice, more flexible systems are often required to analyze vectors. One particularly well-established generalization of orthonormal bases are \emph{frames} \cite{duffschaef1,daubgromay86,ole1n}, which allow redundant and stable representations of vectors in a Hilbert space. The study of frames led to many deep theoretical results, such as versions of the \emph{Balian-Low Theorem} \cite{HeWe_96,BeHeWal_95}, or the \emph{Feichtinger Conjecture}, which was proven by Marcus, Spielmann and Srivastava \cite{ MarSpiSri_15} and is equivalent to many other deep theorems in functional analysis \cite{CasChrLinVer_05}. On the other hand, the abstract properties of frames are also desirable for a variety of applications, such as signal processing \cite{framepsycho16}, compressed sensing \cite{rascva06} and many more, see \cite{ole1n} and the numerous references therein. 

Here, we work in a setting which is an extension of the latter. Our work is on operator-valued frames, because, despite its great usefulness in many situations, frame theory also has its limitations. One can think of a frame as a weighted family of rank-one projections. However, in various applications, such as distributed- and parallel processing \cite{cakuli08,CK3}, wireless sensor networks \cite{IB}, packet encoding \cite{B,BKP,CKLR}, visual and hearing systems \cite{dist3}, or geophones in geophysics \cite{CG}, the linear reconstruction of vectors from \emph{higher-rank measurements} is required. This lead to the study of \emph{fusion frames} \cite{caskut04,cakuli08,koebacasheihomosha23} and further generalizations of frames such as \emph{operator-valued frames}. 

The theory of operator-valued frames (originally called \emph{generalized frames}, or simply \emph{g-frames}) was introduced by W. Sun in \cite{Sun2006437} and encompasses not only frames and fusion frames, but also other established mathematical notions such as \emph{bounded quasi-projectors} \cite{forn03,forn04}, or \emph{time-frequency localization operators} \cite{döfeigrö06}. A countable family $T=(T_k)_{k\in X}$ of bounded operators $T_k\in \B(\Hil)$ on some separable Hilbert space $\Hil$ is called an operator-valued frame, if there exist positive constants $0<A\leq B < \infty$, such that
\begin{equation}\label{gframeineq}
A \Vert f \Vert^2 \leq \sum_{k\in X} \Vert T_k f \Vert ^2 \leq B \Vert f \Vert^2 \qquad (\forall f\in \Hil).
\end{equation}
Abstract theory shows \cite{Sun2006437}, that every operator-valued frame $(T_k)_{k\in X}$ allows for a linear, bounded and stable reconstruction of any vector $f\in \Hil$ from the $\Hil$-valued sequence $(T_k f)_{k\in X} \in \ell^2(X;\Hil)$. 
In addition to the possibility of perfect reconstruction, 
operator-valued frames share also many other parallels to frames, and their properties are well-understood \cite{Sun2006437,SUN2007858,kutpatphi17}.

However, beside the latter mentioned features, operator-valued frames might admit other useful properties, which cannot be captured by the defining inequalities (\ref{gframeineq}) alone. For instance, a natural constraint for an operator-valued frame $(T_k)_{k\in X}$ would be the requirement, that it is \emph{well localized} in the sense that different elements $T_k$ essentially act on different parts of the space $\Hil$ and do not correlate much with each other. For example, we could consider an operator-valued frame $(T_k)_{k\in X}$, which additionally satisfies $\ker(T_l)^{\perp} \subseteq \ker(T_k)$ whenever $k\neq l$, or, equivalently, 
\begin{equation}\label{toostrict}
\Vert T_k T_l^*\Vert = 0 \qquad \text{whenever } k\neq l.  
\end{equation}
Although such operator-valued frames do exist (e.g. every orthonormal fusion basis \cite{cakuli08,koebacasheihomosha23} has this property), the latter condition will only be satisfied for a very small class of operator-valued frames. Hence, it seems more reasonable to study a broader class of operator-valued frames satisfying a relaxed version of (\ref{toostrict}), which will be done here.  

Our main motivation for writing this article comes from considering \emph{Gabor g-frames} for $L^2(\Rd)$ \cite{skret2020}. These are operator-valued frames $(T_k)_{k\in X}$ consisting of \emph{translations} of a fixed \emph{window operator} in the time-frequency plane, generalizing Gabor frames in $L^2(\Rd)$. We show in Theorem \ref{polynomiallylocalizedgframe} that if the window operator is suitably chosen and if the index set $X\subset\mathbb{R}^{2d}$ is sufficiently regular, then there exists some uniform constant $C>0$ such that 
\begin{equation}\label{polyintro}
    \Vert T_k T_l^*\Vert \leq C (1+\vert k-l\vert)^{-s} \qquad (\forall k,l\in X),
\end{equation}
where, similar to the standard Gabor setting,  the decay parameter $s>2d$ stems from the operator class to which $T$ belongs. The latter estimate can be interpreted to mean that $T_k$ and $T_l$ correlate very little, whenever $k$ and $l$ are far apart. In particular, since $T_k T_l^*$ is precisely the $(k,l)$-th entry of the $\B(L^2(\Rd))$-valued Gram matrix $G_T$ associated with the family $(T_k)_{k\in X}$, we may reinterpret (\ref{polyintro}) to saying that the Gram matrix $G_T$ has polynomial off-diagonal decay. In more abstract terms, this means that $G_T$ belongs to the \emph{Jaffard class} of operator-valued matrices, a matrix algebra with manifold convenient properties, that has recently been studied in 
\cite{koeba25,KoeBaSAMPTA25}.
This observation suggests to define localization of an operator-valued frame as proposed in \cite[Definition 5.7]{Krishtal2011} and analogously done for frames in \cite{forngroech1} by declaring an operator-valued frame $T=(T_k)_{k\in X}$ \emph{intrinsically $\A$-localized}, if its associated Gram matrix $G_T$ belongs to a given \emph{solid spectral matrix algebra} $\A$ (see Definition \ref{spectralalgebra}).

In this article, we provide a systematic study of localized operator-valued frames as defined above. We show that intrinsic localization is preserved under canonical duality. For every mutually localized pair of operator-valued dual frames we define a whole chain of associated co-orbit spaces. We prove that the localization assumption of an operator-valued frame guarantees that all of these spaces are well-defined (quasi-)Banach spaces and show that the series associated to perfect reconstruction of such operator-valued frames not only converge in the underlying Hilbert space, but also in each of these spaces. Finally, we apply our results to (possibly irregular) Gabor g-frames for $L^2(\mathbb{R^d})$.

\section{Preliminaries and Notation}\label{Preliminaries and Notation}

We denote the cardinality of a set $X$ by $\vert X \vert$. For $x\in \mathbb{R}^d$, $\vert x\vert$ denotes the Euclidean norm on $\mathbb{R}^d$. The symbol $\mathbb{N}$ denotes the set of positive integers $\lbrace 1, 2, 3, \dots \rbrace$. 
The kernel and range of an operator $T$ is denoted by $\mathcal{N}(T)$ and  $\mathcal{R}(T)$  respectively, and  $\mathcal{I}_X$ denotes the identity element on a given space $X$. The space of bounded operators between two (quasi-)normed spaces $X$ and $Y$ is denoted by $\mathcal{B}(X,Y)$\index{$\mathcal{B}(X,Y)$} and we set $\mathcal{B}(X) := \mathcal{B}(X,X)$. We write $X \simeq Y$ if two (quasi-)normed spaces $X$ and $Y$ are isomorphic and $X \cong Y$ if they are isometrically isomorphic. The topological dual space of a normed space $X$ is denoted by $X^{*}$. 

Throughout these notes, $\Hil$ is a separable Hilbert space and $\Vert \cdot \Vert$ (without an index) always denotes either the norm in $\Hil$ or the norm in $\B(\Hil)$. All other occurring norms will be labeled with an index. 

\subsection{The Moore-Penrose pseudo-inverse}\label{The Moore-Penrose pseudo-inverse}

Recall the construction of the (Moore-Penrose) pseudo-inverse of a bounded closed range operator between two Hilbert spaces \cite{ole1n}: For $U \in \mathcal{B}( \mathcal{H}_1 , \mathcal{H}_2 )$ having closed range, the restriction $U_0 := U\vert_{\mathcal{N}(U)^{\perp}} : \mathcal{N}(U)^{\perp} \longrightarrow \mathcal{H}_2$, is bounded and injective with $\mathcal{R}(U_0) = \mathcal{R}(U)$. Hence, $U_0^{-1} :\mathcal{R}(U) \longrightarrow \mathcal{N}(U)^{\perp}$ is well-defined and bounded, and the extension $U^{\dagger}$ of $U_0^{-1}$, defined by 
$$U^{\dagger} x = \begin{cases}
U_0^{-1} x & \text{  if } x\in \mathcal{R}(U)\\
0 & \text{  if } x\in \mathcal{R}(U)^{\perp}
\end{cases}$$
is called the \emph{(Moore-Penrose) pseudo inverse} of $U$. In case $U$ is surjective we have $U^{\dagger} = U^*(UU^*)^{-1}$, and if $U$ is even bijective then $U^{\dagger} = U^{-1}$.

The pseudo-inverse can be characterized as follows.

\begin{lemma}\label{pseudoinversechar} \cite{BEUTLER1976397}
    Let $U \in \mathcal{B}( \mathcal{H}_1 , \mathcal{H}_2 )$ be a bounded operator between Hilbert spaces and assume that $U$ has closed range. The pseudo-inverse of $U$ is the unique operator $U^{\dagger}: \mathcal{H}_2 \longrightarrow  \mathcal{H}_1$, satisfying the three relations 
\begin{equation}\label{relationspseudo}
\mathcal{N}(U^{\dagger}) = \mathcal{R}(U)^{\perp}, \qquad \mathcal{R}(U^{\dagger}) = \mathcal{N}(U)^{\perp}, \qquad UU^{\dagger}U = U.
\end{equation}
\end{lemma}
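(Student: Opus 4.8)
The plan is to verify that the Moore--Penrose pseudo-inverse $U^{\dagger}$ constructed above satisfies the three relations in \eqref{relationspseudo}, and then to prove uniqueness by showing that any operator $V \in \mathcal{B}(\mathcal{H}_2,\mathcal{H}_1)$ satisfying those three relations must coincide with $U^{\dagger}$. For the first part I would argue directly from the definition: by construction $U^{\dagger}$ vanishes on $\mathcal{R}(U)^{\perp}$ and acts as the injective map $U_0^{-1}$ on $\mathcal{R}(U)$, which is injective, so $\mathcal{N}(U^{\dagger}) = \mathcal{R}(U)^{\perp}$ is immediate. Likewise $\mathcal{R}(U^{\dagger}) = \mathcal{R}(U_0^{-1}) = \mathcal{N}(U)^{\perp}$ since $U_0^{-1}$ maps onto $\mathcal{N}(U)^{\perp}$ by definition of $U_0$. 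For the third relation, take $x \in \mathcal{H}_1$ and split $x = x_0 + x_1$ with $x_0 \in \mathcal{N}(U)^{\perp}$, $x_1 \in \mathcal{N}(U)$; then $Ux = Ux_0 \in \mathcal{R}(U)$, so $U^{\dagger}Ux = U_0^{-1}U_0 x_0 = x_0$, and hence $UU^{\dagger}Ux = Ux_0 = Ux$.

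For uniqueness, suppose $V \in \mathcal{B}(\mathcal{H}_2,\mathcal{H}_1)$ satisfies $\mathcal{N}(V) = \mathcal{R}(U)^{\perp}$, $\mathcal{R}(V) = \mathcal{N}(U)^{\perp}$ and $UVU = U$. From $UVU = U$ we get $UVUx = Ux$ for all $x$, i.e. $U(VUx - x) = 0$, so $VUx - x \in \mathcal{N}(U)$; on the other hand $VUx \in \mathcal{R}(V) = \mathcal{N}(U)^{\perp}$, so for $x \in \mathcal{N}(U)^{\perp}$ the difference $VUx - x$ lies in $\mathcal{N}(U) \cap \mathcal{N}(U)^{\perp} = \{0\}$, giving $VUx = x = U^{\dagger}Ux$ on $\mathcal{N}(U)^{\perp}$, and trivially $VUx = 0 = U^{\dagger}Ux$ on $\mathcal{N}(U)$. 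Thus $V$ and $U^{\dagger}$ agree on $\mathcal{R}(U)$ (the range of the map $x \mapsto Ux$). Since both $V$ and $U^{\dagger}$ vanish on $\mathcal{R}(U)^{\perp}$ by the first relation, and $\mathcal{H}_2 = \overline{\mathcal{R}(U)} \oplus \mathcal{R}(U)^{\perp} = \mathcal{R}(U) \oplus \mathcal{R}(U)^{\perp}$ because $U$ has closed range, continuity of $V$ and $U^{\dagger}$ forces $V = U^{\dagger}$ on all of $\mathcal{H}_2$.

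The main subtlety to handle carefully is the use of closedness of $\mathcal{R}(U)$: it is exactly what guarantees $\mathcal{H}_2 = \mathcal{R}(U) \oplus \mathcal{R}(U)^{\perp}$ as an orthogonal decomposition (rather than only a dense direct sum), which in turn makes $U^{\dagger}$ bounded and makes the agreement of $V$ and $U^{\dagger}$ on $\mathcal{R}(U)$ sufficient to conclude equality everywhere by continuity. Everything else is a routine decomposition argument, so I would keep those computations terse and emphasize where closed range is invoked. Since this lemma is quoted from \cite{BEUTLER1976397}, an alternative is simply to cite it; but the short self-contained argument above is worth including for completeness.
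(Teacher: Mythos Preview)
Your argument is correct, and there is nothing to compare it against: the paper does not prove this lemma but simply cites \cite{BEUTLER1976397}, exactly as you anticipate in your final remark. Two small clean-ups are worth making. First, in the uniqueness step you invoke ``continuity of $V$ and $U^{\dagger}$'' to pass from agreement on $\mathcal{R}(U)$ and on $\mathcal{R}(U)^{\perp}$ to agreement on all of $\mathcal{H}_2$; this is unnecessary, because closedness of $\mathcal{R}(U)$ already gives the genuine orthogonal decomposition $\mathcal{H}_2 = \mathcal{R}(U) \oplus \mathcal{R}(U)^{\perp}$, so linearity alone suffices. Second, the lemma as stated asserts uniqueness among \emph{all} operators $\mathcal{H}_2 \to \mathcal{H}_1$, not just bounded ones, and your argument in fact never uses boundedness of $V$ --- so you can drop the hypothesis $V \in \mathcal{B}(\mathcal{H}_2,\mathcal{H}_1)$ and obtain the slightly stronger uniqueness claim the lemma actually makes.
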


The following formula for the pseudo-inverse is folklore and can $-$ for instance $-$ be deduced from \cite[Lemma 2.5.2 (iv)]{ole1n} or \cite[Corollary 2.3]{olepinv} together with the relations (\ref{relationspseudo}).  

\begin{lemma}\label{pseudoinverseformula}
Let $U\in \mathcal{B}( \mathcal{H}_1 , \mathcal{H}_2 )$ be a bounded operator between Hilbert spaces and assume that $U$ has closed range. Then
$$U^{\dagger} = U^* (UU^*)^{\dagger} .$$ 
\end{lemma}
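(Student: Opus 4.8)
The claim is that for a bounded operator $U \in \mathcal{B}(\mathcal{H}_1,\mathcal{H}_2)$ with closed range, one has $U^\dagger = U^*(UU^*)^\dagger$. The cleanest approach is to verify that the right-hand side satisfies the three characterizing relations in Lemma \ref{pseudoinversechar}, and then invoke uniqueness. So I set $V := U^*(UU^*)^\dagger$ and aim to check $\mathcal{N}(V) = \mathcal{R}(U)^\perp$, $\mathcal{R}(V) = \mathcal{N}(U)^\perp$, and $UVU = U$.

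First I would record the standard facts I need. Since $U$ has closed range, so does $U^*$, and moreover $\mathcal{R}(U^*) = \mathcal{N}(U)^\perp$ and $\mathcal{N}(U^*) = \mathcal{R}(U)^\perp$. Also $UU^* \in \mathcal{B}(\mathcal{H}_2)$ is self-adjoint with closed range (as $\mathcal{R}(UU^*) = \mathcal{R}(U)$, a consequence of the closed range of $U$), so $(UU^*)^\dagger$ exists; being the pseudo-inverse of a self-adjoint operator it is itself self-adjoint, with $\mathcal{N}((UU^*)^\dagger) = \mathcal{R}(UU^*)^\perp = \mathcal{R}(U)^\perp$ and $\mathcal{R}((UU^*)^\dagger) = \mathcal{N}(UU^*)^\perp = \mathcal{R}(U)$, and it satisfies the symmetric relation $(UU^*)(UU^*)^\dagger(UU^*) = UU^*$ as well as $(UU^*)^\dagger(UU^*)(UU^*)^\dagger = (UU^*)^\dagger$.

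Now the three verifications. For the range identity: $\mathcal{R}(V) = \mathcal{R}(U^*(UU^*)^\dagger) \subseteq \mathcal{R}(U^*) = \mathcal{N}(U)^\perp$; for the reverse inclusion, note $\mathcal{R}((UU^*)^\dagger) = \mathcal{R}(U)$, and on $\mathcal{R}(U)$ the map $U^*$ is injective (its kernel $\mathcal{N}(U^*) = \mathcal{R}(U)^\perp$ meets $\mathcal{R}(U)$ trivially), so $U^*$ maps $\mathcal{R}(U)$ onto $U^*\mathcal{R}(U) = \mathcal{R}(U^*U) = \mathcal{R}(U^*) = \mathcal{N}(U)^\perp$ (again using closed range). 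For the kernel identity: if $x \in \mathcal{R}(U)^\perp = \mathcal{N}((UU^*)^\dagger)$ then $Vx = 0$; conversely if $Vx = U^*(UU^*)^\dagger x = 0$, then $(UU^*)^\dagger x \in \mathcal{N}(U^*) \cap \mathcal{R}((UU^*)^\dagger) = \mathcal{R}(U)^\perp \cap \mathcal{R}(U) = \{0\}$, so $x \in \mathcal{N}((UU^*)^\dagger) = \mathcal{R}(U)^\perp$. Finally, for $UVU = U$: compute $UVU = UU^*(UU^*)^\dagger U$; using $(UU^*)(UU^*)^\dagger(UU^*) = UU^*$ one gets $UU^*(UU^*)^\dagger U U^* = U U^*$, and since $U^*$ is injective on $\mathcal{R}(U) \supseteq \mathcal{R}(UU^*(UU^*)^\dagger U - U)$ — more carefully, both $UVU$ and $U$ send anything in $\mathcal{N}(U)$ to $0$ and agree after post-composition with $U^*$ on $\mathcal{N}(U)^\perp$, and $U^*$ is injective on $\mathcal{R}(U)$ — we conclude $UVU = U$.

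The main obstacle is the last step, $UVU = U$: the natural computation only directly yields $(UVU)U^* = UU^*$, i.e.\ $UVU$ and $U$ agree after right-multiplication by $U^*$, and one must argue that this forces $UVU = U$. This works because $\mathcal{R}(U^*) = \mathcal{N}(U)^\perp$ is dense-complemented in the right way: any bounded operator $W$ on $\mathcal{H}_1$ with $WU^* = 0$ and $\mathcal{R}(W) \subseteq \mathcal{R}(U)$, applied here to $W = UVU - U$, must vanish because $WU^* = 0$ means $\mathcal{R}(U^*) \subseteq \mathcal{N}(W)$, hence $W$ vanishes on $\mathcal{N}(U)^\perp$, while on $\mathcal{N}(U)$ both $UVU$ and $U$ vanish (the former since $\mathcal{R}(V)\subseteq\mathcal{N}(U)^\perp$ does not immediately help — rather, $Ux=0$ directly gives $UVUx = 0$). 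Thus $W = 0$. Alternatively, and perhaps more cleanly, one can simply cite that the folklore formula follows from \cite[Lemma 2.5.2 (iv)]{ole1n} or \cite[Corollary 2.3]{olepinv} combined with the relations (\ref{relationspseudo}), as the statement already indicates.
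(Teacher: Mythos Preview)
Your proof is correct: the verification that $V = U^*(UU^*)^\dagger$ satisfies the three characterizing relations of Lemma~\ref{pseudoinversechar} goes through as written, and the slightly delicate step $UVU = U$ is handled properly via $WU^* = 0$ together with $W|_{\mathcal{N}(U)} = 0$. The paper itself does not give a proof but merely remarks that the formula is folklore and can be deduced from the cited references together with the relations~(\ref{relationspseudo}); your argument is exactly a self-contained execution of that suggested route, so the approaches coincide in spirit---you have just made explicit what the paper leaves to the literature. (As a minor streamlining, you could bypass the $WU^* = 0$ argument by observing that $UU^*(UU^*)^\dagger$ is the orthogonal projection onto $\mathcal{R}(UU^*) = \mathcal{R}(U)$, whence $UVU = P_{\mathcal{R}(U)}U = U$ immediately.)
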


\subsection{Weight functions}\label{Weight functions}

A \emph{weight function} on $\mathbb{R}^{d}$, or simply \emph{weight}, is a continuous and positive function $\nu :\mathbb{R}^{d} \longrightarrow (0,\infty )$. A weight $\nu$ is called \emph{submultiplicative}, if 
$$\nu (x+x') \leq \nu(x)\nu(x') \qquad (\forall x, x' \in \mathbb{R}^{d}),$$
and \emph{symmetric} if 
$$\nu(x) = \nu(-x) \qquad (\forall x \in \mathbb{R}^d).$$
A weight $m$ is called \emph{$\nu$-moderate} if there exists some constant $C>0$ such that 
$$m(x+x') \leq Cm(x)\nu(x') \qquad (\forall x, x' \in \mathbb{R}^{d}).$$
We say that $\nu$ satisfies the \emph{GRS-condition} (Gelfand-Raikov-Shilov condition \cite{GRS64}), if
$$\lim_{n\rightarrow \infty} (\nu(nz))^{\frac{1}{n}} = 1 \qquad (\forall z\in \mathbb{R}^{d}).$$
For more background on weight functions see \cite{gr06weightsinTFA}.

\subsection{Bochner sequence spaces}

Occasionally, our analysis will rely on properties on Banach space-valued $\ell^p$-spaces, defined as follows. If $B$ is a Banach space, $X$ a countable index set, and $\nu = (\nu_k)_{k\in X}$ a family of positive numbers (e.g. samples of a weight), then for each $p\in (0,\infty]$ the \emph{Bochner sequence space} $\ell^p(X; B)$ \cite[Chapter 1]{HyNeVeWe16} is defined by 
$$\ell_{\nu}^p(X; B) := \left\lbrace (Y_k)_{k\in X}: Y_k \in B \, (\forall k\in X), (\Vert Y_k \Vert_B \cdot \nu_k)_{k\in X} \in \ell^p(X) \right\rbrace .$$
In case $\nu_k = 1$ for all $k\in X$, we write $\ell_{\nu}^p(X; B) = \ell^p(X; B)$.

The Bochner sequence spaces $\ell_{\nu}^p(X; B)$ share many properties with the classical $\ell^p$-spaces. In particular (see \cite[Chapter 1]{HyNeVeWe16}), $\ell_{\nu}^p(X; B)$ equipped with the (quasi-)norm 
$$\Vert (Y_k)_{k\in X} \Vert_{\ell_{\nu}^p(X; B)} = \Vert (\Vert Y_k \Vert_B \cdot \nu_k)_{k\in X}\Vert_{\ell^p(X)},$$
is a Banach space for every $1\leq p \leq \infty$ and a quasi-Banach space for $0<p<1$. If $1\leq p <\infty$ and $\frac{1}{p}+\frac{1}{q} = 1$, then the dual space $(\ell_{\nu}^p(X; B))^*$ of $\ell_{\nu}^p(X; B)$ is isometrically isomorphic to $\ell_{1/\nu}^q(X; B^*)$. Hence, we identify the latter with $(\ell_{\nu}^p(X; B))^*$ from now on. 

By $\ell^{00}(X;B)$ we denote the space of all finitely supported $B$-valued sequences indexed by $X$, and by $\ell_{\nu}^{0}(X;B)$ we denote the space of all $B$-valued sequences $(f_k)_{k\in X}$ such that for all $\varepsilon > 0$ there exists $(g_k)_{k\in X} \in \ell^{00}(X;B)$ satisfying $\sup_{k\in X} \Vert f_k - g_k \Vert \nu(l) < \varepsilon$. It is straightforward to see that $(\ell^{00}(X;B), \Vert \, . \, \Vert_{\ell_{\nu}^p(X; B)})$ is a dense subspace of $(\ell_{\nu}^p(X;B), \Vert \, . \, \Vert_{\ell_{\nu}^p(X; B)})$ for each $p\in [1,\infty)$. By definition, $(\ell_{\nu}^{00}(X;B), \Vert \, . \, \Vert_{\ell_{\nu}^{0}(X; B)})$ is also a dense subspace of $(\ell_{\nu}^{0}(X;B), \Vert \, . \, \Vert_{\ell_{\nu}^{0}(X; B)})$, where we set $\Vert \, . \, \Vert_{\ell_{\nu}^{0}(X; B)} := \Vert \, . \, \Vert_{\ell_{\nu}^{\infty}(X; B)}$ for notational convenience. Moreover, in \cite[Theorem 3.1]{YaSr14}  it was proven that $(\ell_{\nu}^{0}(X;B), \Vert \, . \, \Vert_{\ell_{\nu}^{0}(X; B)})^*$ is isomorphic to $(\ell_{1/\nu}^1(X;B^*), \Vert \, . \, \Vert_{\ell_{1/\nu}^1(X; B^*)})$.

An important property of Bochner sequence spaces is the fact that Riesz-Thorin interpolation holds. For later reference, we state the following special case of \cite[Theorem 2.2.1]{HyNeVeWe16}.

\begin{proposition}[Riesz-Thorin interpolation]\label{Riesz-Thorin}
Assume that $A\in \mathcal{B}(\ell_{\nu}^1(X; B))$ and $A\in \mathcal{B}(\ell_{\nu}^{\infty}(X; B))$. Then also $A\in \mathcal{B}(\ell_{\nu}^p(X; B))$ for every $1 < p < \infty$.
\end{proposition}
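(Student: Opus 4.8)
The plan is to reduce the weighted statement to the unweighted vector-valued Riesz--Thorin theorem by conjugating $A$ with a diagonal multiplication operator. First I would introduce $M_\nu \colon (Y_k)_{k\in X} \mapsto (\nu_k Y_k)_{k\in X}$. By the very definition of the (quasi-)norm on $\ell_\nu^p(X;B)$, the map $M_\nu$ is an isometric isomorphism from $\ell_\nu^p(X;B)$ onto $\ell^p(X;B)$, and crucially this holds \emph{simultaneously} for every $p\in(0,\infty]$, with common inverse $M_{1/\nu}$. Hence, setting $\widetilde{A} := M_\nu \, A \, M_{1/\nu}$, the hypotheses $A\in\B(\ell_\nu^1(X;B))$ and $A\in\B(\ell_\nu^\infty(X;B))$ translate into $\widetilde{A}\in\B(\ell^1(X;B))$ and $\widetilde{A}\in\B(\ell^\infty(X;B))$, with equality of the respective operator norms at both endpoints.

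Next I would identify $\ell^p(X;B)$ with the Bochner space $L^p(X,\mu;B)$, where $\mu$ is the counting measure on the countable set $X$ (which is $\sigma$-finite); every $B$-valued sequence is a pointwise limit of simple functions and hence strongly $\mu$-measurable, so there is no measurability obstruction. Now the cited vector-valued Riesz--Thorin theorem \cite[Theorem 2.2.1]{HyNeVeWe16} applies to $\widetilde A$ on the interpolation couple $\bigl(L^1(X,\mu;B),\, L^\infty(X,\mu;B)\bigr)$: since $1/p = (1-\theta)\cdot 1 + \theta\cdot 0 = 1-\theta$ exhausts all exponents $p\in(1,\infty)$ as $\theta$ runs over $(0,1)$, we obtain $\widetilde A\in\B(\ell^p(X;B))$ for every $1<p<\infty$, with the bound $\Vert\widetilde A\Vert_{\ell^p\to\ell^p}\le\Vert\widetilde A\Vert_{\ell^1\to\ell^1}^{\,1-\theta}\,\Vert\widetilde A\Vert_{\ell^\infty\to\ell^\infty}^{\,\theta}$. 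Conjugating back, $A = M_{1/\nu}\,\widetilde A\, M_\nu$ is then bounded on $\ell_\nu^p(X;B)$ for every $1<p<\infty$, which is the assertion.

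The only point requiring care --- the ``hard part'', such as it is --- is verifying that the hypotheses of \cite[Theorem 2.2.1]{HyNeVeWe16} are genuinely met for a Bochner \emph{sequence} space: specifically, that the complex interpolation space of the couple at the endpoints $1$ and $\infty$ is exactly $\ell^p(X;B)$, with \emph{no} geometric assumption (UMD, type/cotype, Radon--Nikod\'ym, \dots) on $B$. This is classical and rests on the fact that the vector-valued complex interpolation identity $[L^{1}(\mu;B),L^{\infty}(\mu;B)]_\theta = L^{p}(\mu;B)$ holds isometrically for an arbitrary Banach range space over a $\sigma$-finite measure space; the absence of geometric hypotheses is exactly what makes this a legitimate instance of \emph{operator} interpolation (one is interpolating an operator already assumed bounded at both endpoints, not, say, establishing boundedness of a singular integral). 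A minor bookkeeping remark: at the upper endpoint one should work with the full space $\ell^\infty(X;B)$ rather than the closed subspace $\ell_\nu^0(X;B)$ of null sequences, but this is harmless since boundedness on all of $\ell^\infty$ is precisely what is assumed.
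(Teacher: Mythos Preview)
Your argument is correct. The paper does not actually prove this proposition: it merely states it as a special case of \cite[Theorem 2.2.1]{HyNeVeWe16} without further comment. Your conjugation by the diagonal weight operator $M_\nu$ is precisely the standard way to see how the weighted statement reduces to that cited unweighted result, and your verification that no geometric hypotheses on $B$ are needed is accurate. So you have supplied the details the paper omits, by essentially the intended route.
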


The following simple lemma will be convenient later on.

\begin{lemma}\label{vectortoscalar}
Let $a_{kl} \in \mathbb{C}$ ($k,l\in X$) be given and $0<p\leq \infty$ be arbitrary. If the $\B(\Hil)$-valued matrix $A=[a_{kl} \cdot \mathcal{I}_{\B(\Hil)}]_{k,l\in X}$ defines an element in $\B(\ell^p_{\nu}(X;B))$ via matrix multiplication, then the scalar matrix $M=[a_{kl}]_{k,l\in X}$ defines an element in $\B(\ell_{\nu}^p(X))$ via matrix multiplication.
\end{lemma}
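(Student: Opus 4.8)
The plan is to reduce the scalar statement to the vector-valued hypothesis by testing the matrix $A$ only against those $B$-valued sequences supported along a single fixed direction in $B$. First I dispose of the trivial case $B=\{0\}$ and fix a vector $e\in B$ with $\Vert e\Vert_B=1$. The crucial elementary fact is that $\iota\colon(c_k)_{k\in X}\mapsto(c_k e)_{k\in X}$ is an isometric embedding of $\ell^p_\nu(X)$ into $\ell^p_\nu(X;B)$: since $\Vert c_k e\Vert_B\cdot\nu_k=|c_k|\cdot\nu_k$ for every $k$, one has $(c_k e)_{k\in X}\in\ell^p_\nu(X;B)$ and $\Vert\iota((c_k)_k)\Vert_{\ell^p_\nu(X;B)}=\Vert(c_k)_k\Vert_{\ell^p_\nu(X)}$, and this holds verbatim for every $0<p\le\infty$, the quasi-norm range $0<p<1$ included.

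Next I would verify that $M$ is even well-defined on $\ell^p_\nu(X)$. Let $c=(c_k)_{k\in X}\in\ell^p_\nu(X)$ and set $y:=\iota(c)\in\ell^p_\nu(X;B)$. By assumption $Ay$ is a well-defined element of $\ell^p_\nu(X;B)$; in particular, for each fixed $k\in X$ the series defining its $k$-th entry, namely $\sum_{l\in X}(a_{kl}\cdot\mathcal{I}_{\B(\Hil)})(c_l e)=\bigl(\sum_{l\in X}a_{kl}c_l\bigr)e$ (using that $a_{kl}\cdot\mathcal{I}_{\B(\Hil)}$ acts on $B$ as multiplication by the scalar $a_{kl}$), converges in $B$. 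Since $t\mapsto te$ is a homeomorphism of $\mathbb{C}$ onto the closed one-dimensional subspace $\mathbb{C}e\subseteq B$, convergence of this $B$-valued series forces convergence of the scalar series $\sum_{l\in X}a_{kl}c_l$. Hence $(Mc)_k:=\sum_{l\in X}a_{kl}c_l$ is well-defined for every $k$, so $M$ acts as a linear map from $\ell^p_\nu(X)$ into the space of all scalar sequences on $X$, and by construction $(Ay)_k=(Mc)_k\,e$ for every $k\in X$.

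Boundedness is then automatic: from $(Ay)_k=(Mc)_k e$ and $\Vert e\Vert_B=1$ we get $\Vert(Ay)_k\Vert_B\cdot\nu_k=|(Mc)_k|\cdot\nu_k$ for all $k$, so $Mc\in\ell^p_\nu(X)$ and $\Vert Mc\Vert_{\ell^p_\nu(X)}=\Vert Ay\Vert_{\ell^p_\nu(X;B)}=\Vert A\iota(c)\Vert_{\ell^p_\nu(X;B)}\le\Vert A\Vert_{\B(\ell^p_\nu(X;B))}\,\Vert c\Vert_{\ell^p_\nu(X)}$. Thus $M$ defines a bounded operator on $\ell^p_\nu(X)$ via matrix multiplication, in fact with $\Vert M\Vert_{\B(\ell^p_\nu(X))}\le\Vert A\Vert_{\B(\ell^p_\nu(X;B))}$.

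I do not anticipate a genuine obstacle: this is essentially a bookkeeping lemma. The only spot that deserves a sentence of care is the passage in the middle step from norm-convergence of the $B$-valued series $\sum_l a_{kl}c_l e$ to convergence of the underlying scalar series $\sum_l a_{kl}c_l$ — which is precisely why one fixes a single nonzero vector $e$ and works along $\mathbb{C}e$, rather than trying to argue coordinatewise in $B$. Note also that the whole argument is uniform in $p\in(0,\infty]$, so the Banach and quasi-Banach cases are settled simultaneously, and nothing beyond the definition of the (quasi-)norm on $\ell^p_\nu(X;B)$ is used.
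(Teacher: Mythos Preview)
Your proof is correct and follows essentially the same route as the paper: fix a unit vector $e\in B$, embed $\ell^p_\nu(X)$ isometrically into $\ell^p_\nu(X;B)$ via $(c_k)\mapsto(c_k e)$, and read off the bound $\Vert Mc\Vert_{\ell^p_\nu(X)}=\Vert A\iota(c)\Vert_{\ell^p_\nu(X;B)}\le\Vert A\Vert\,\Vert c\Vert_{\ell^p_\nu(X)}$. The only notable addition is that you explicitly argue the well-definedness of the scalar series $\sum_l a_{kl}c_l$ from convergence of the $B$-valued series along $\mathbb{C}e$, and you rule out the degenerate case $B=\{0\}$; the paper's proof omits both of these and jumps straight to the norm estimate.
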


\begin{proof}
Let $c=(c_l)_{l\in X} \in \ell_{\nu}^p(X)$ be arbitrary. Then for every unit-norm $h\in B$, $(h_l)_{l\in X} := (c_l h)_{l\in X}$ is contained in $\ell^p_{\nu}(X;B)$ and $\Vert (h_l)_{l\in X} \Vert_{\ell^p_{\nu}(X;B)} = \Vert c \Vert_{\ell_{\nu}^p(X)}$. Thus
\begin{flalign}
\Vert Mc\Vert_{\ell_{\nu}^p(X)} &= \left\Vert \left(\left\vert \sum_{l\in X} a_{kl} c_l \right\vert \Vert h \Vert  \right)_{k\in X} \right\Vert_{\ell_{\nu}^p(X)} \notag \\
&= \left\Vert \left(\left\Vert \sum_{l\in X} a_{kl} c_l h \right\Vert \right)_{k\in X} \right\Vert_{\ell_{\nu}^p(X)} \notag \\
&= \Vert A(h_l)_{l\in X}\Vert_{\ell^p_{\nu}(X;B)} \notag \\
&\leq \Vert A \Vert_{\B(\ell^p_{\nu}(X;B))}\Vert c \Vert_{\ell_{\nu}^p(X)},\notag  
\end{flalign}
as was to be shown.
\end{proof}

We will also need the following special case of the Stein-Weiss interpolation theorem \cite[p.115, Theorem 5.4.1]{belö76}. 

\begin{proposition}[Stein-Weiss interpolation]\label{SteinWeiss}
Let $0<p\leq \infty$ be arbitrary. If $A \in \B(\ell^p_{\nu}(X))$ and $A \in \B(\ell^p_{1/\nu}(X))$ then also $A \in \B(\ell^p(X))$.      
\end{proposition}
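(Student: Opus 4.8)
\noindent\emph{Proof idea.} The plan is to realise $\ell^p(X)$ as the complex-interpolation midpoint of $\ell_\nu^p(X)$ and $\ell_{1/\nu}^p(X)$ and then quote, or adapt, the Stein--Weiss theorem. Write $D_\mu$ for the diagonal multiplier $c=(c_k)_{k\in X}\mapsto(\mu_k c_k)_{k\in X}$ attached to a positive sequence $\mu$. Multiplication by $\nu$ is an isometric bijection $\ell_\nu^p(X)\to\ell^p(X)$ and multiplication by $1/\nu$ an isometric bijection $\ell_{1/\nu}^p(X)\to\ell^p(X)$; hence $A\in\B(\ell_\nu^p(X))$ is equivalent to $D_\nu AD_\nu^{-1}\in\B(\ell^p(X))$ and $A\in\B(\ell_{1/\nu}^p(X))$ to $D_\nu^{-1}AD_\nu\in\B(\ell^p(X))$, while $A\in\B(\ell^p(X))$ is the ``$\theta=\tfrac{1}{2}$'' statement because the corresponding intermediate weight $\nu^{1/2}(\nu^{-1})^{1/2}$ is identically $1$.

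For $1\le p\le\infty$ this is exactly \cite[Theorem 5.4.1]{belö76}, applied with equal source and target exponents $p$, interpolation parameter $\theta=\tfrac{1}{2}$, and the two weights that define $\ell_\nu^p(X)$ and $\ell_{1/\nu}^p(X)$: the interpolated space is $\ell^p(X)$, on which $A$ is therefore bounded.

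For $0<p<1$, where no duality pairing is available, I would re-run the complex-interpolation argument directly, using instead that $z\mapsto|h(z)|^p$ is subharmonic whenever $h$ is holomorphic. For $c\in\ell^{00}(X)$ set $S_z:=D_\nu^{1-2z}AD_\nu^{2z-1}$ on the strip $0\le\Re z\le1$, so that $(S_zc)_k=\nu_k^{1-2z}\sum_l a_{kl}\nu_l^{2z-1}c_l$ is holomorphic in $z$ and $S_{1/2}=A$. A phase computation on the two edges gives $\|S_{it}c\|_{\ell^p(X)}^p\le\|A\|_{\B(\ell_\nu^p(X))}^p\|c\|_{\ell^p(X)}^p$ and $\|S_{1+it}c\|_{\ell^p(X)}^p\le\|A\|_{\B(\ell_{1/\nu}^p(X))}^p\|c\|_{\ell^p(X)}^p$; since $z\mapsto\|S_zc\|_{\ell^p(X)}^p=\sum_k|(S_zc)_k|^p$ is subharmonic, a Phragm\'en--Lindel\"of estimate on the strip bounds $\|Ac\|_{\ell^p(X)}^p=\|S_{1/2}c\|_{\ell^p(X)}^p$ by $\max\!\big(\|A\|_{\B(\ell_\nu^p(X))},\|A\|_{\B(\ell_{1/\nu}^p(X))}\big)^p\|c\|_{\ell^p(X)}^p$, and density of $\ell^{00}(X)$ in $\ell^p(X)$ extends the bound to all of $\ell^p(X)$. (An alternative for this range is to invoke the quasi-Banach version of Calder\'on's complex method together with the elementary Calder\'on-product identity $\ell_\nu^p(X)^{1-\theta}\ell_{1/\nu}^p(X)^{\theta}=\ell^p_{\nu^{1-2\theta}}(X)$.)

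The case $p\ge1$ is routine. The main obstacle is the quasi-Banach range $0<p<1$: one has to verify that $\sum_k|(S_zc)_k|^p$ is finite and subharmonic on the open strip and, for the Phragm\'en--Lindel\"of step, produce an a priori bound on it that is uniform in $\Im z$ over the strip (for fixed finitely supported $c$), which uses that each column $Ae_l$ of $A$ lies in both $\ell_\nu^p(X)$ and $\ell_{1/\nu}^p(X)$.
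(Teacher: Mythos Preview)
The paper does not supply its own proof of this proposition: it simply records the statement as a special case of the Stein--Weiss theorem and cites \cite[Theorem 5.4.1]{belö76}. Your treatment of the Banach range $1\le p\le\infty$ is therefore exactly what the paper does --- invoke the same reference with $\theta=\tfrac12$ and the two mutually reciprocal weights.

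For $0<p<1$ you go beyond the paper. This is sensible, since Bergh--L\"ofstr\"om's Theorem 5.4.1 is formulated in the Banach setting, so the paper's bare citation does not literally cover the quasi-Banach range (in fact the proposition is only invoked later for $1\le p\le\infty$, in the proof of Corollary \ref{1admissible}, so the extra generality is never used). Your subharmonic three-lines argument is a correct route: for finitely supported $c$ each coordinate $(S_zc)_k$ is entire, $|(S_zc)_k|^p$ is subharmonic, and the H\"older-type bound
\[
\sum_k \nu_k^{(1-2\sigma)p}|a_{kl}|^p
\;\le\;\Big(\sum_k \nu_k^{p}|a_{kl}|^p\Big)^{1-\sigma}\Big(\sum_k \nu_k^{-p}|a_{kl}|^p\Big)^{\sigma}
\]
(using that each column $Ae_l$ lies in $\ell^p_\nu\cap\ell^p_{1/\nu}$) shows that $\sum_k|(S_zc)_k|^p$ is finite and uniformly bounded on the closed strip, independently of $\Im z$; this handles both obstacles you flag and makes the Phragm\'en--Lindel\"of step legitimate. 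So your proposal is correct and, in the quasi-Banach range, more complete than the paper's citation.
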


Finally, we will often encounter $\B(\Hil)$-valued matrices $A=[A_{k,l}]_{k,l\in X}$, which define bounded operators on (weighted) Bochner sequence spaces with values in $\Hil$. If $A=[A_{k,l}]_{k,l\in X} \in \B(\ell^p_{\nu}(X;\Hil))$ (for some $1\leq p < \infty$), then its Banach space adjoint $A'$ is an element in $\B(\ell^q_{1/\nu}(X;\Hil))$ (where $\frac{1}{p}+\frac{1}{q}=1$) and a direct computation shows that 
\begin{equation}\label{BSadjoint}
A' = ([A_{k,l}^*]_{k,l\in X})^T
\end{equation}
where the exponent $T$ denotes matrix transposition and $A_{k,l}^*$ is the adjoint of $A_{k,l}\in \B(\Hil)$.

\subsection{Operator-valued frames}\label{g-frames}

A countable family $(T_k)_{k\in X}$ of bounded operators $T_k \in \B(\Hil)$ constitutes an \emph{operator-valued frame} (originally called \emph{generalized frame}, or simply \emph{g-frame}) \cite{Sun2006437} for $\Hil$, if there exist positive constants $0<A\leq B<\infty$, such that 
\begin{equation}\label{gframedef}
    A\Vert f \Vert^2 \leq \sum_{k\in X} \Vert T_k f \Vert^2 \leq B\Vert f \Vert^2 \qquad(\forall f\in \Hil).
\end{equation}
From now on, for brevity reason, we call an operator-valued frame a \emph{g-frame}. We will refer to the constants $A$ and $B$ as the \emph{lower} and \emph{upper g-frame bound}, respectively. 

We call $(T_k)_{k\in X}$ a \emph{g-Bessel sequence}, whenever the upper (but not necessarily the lower) inequality in (\ref{gframedef}) is satisfied for some prescribed $B>0$ and all $f\in \Hil$. Let $T=(T_k)_{k\in X}$ be a g-Bessel sequence in $\Hil$. Then the following operators are well-defined and bounded \cite{Sun2006437}:
\begin{itemize}
    \item The \emph{synthesis operator} $D_T : \ell^2(X;\Hil) \longrightarrow \mathcal{H}$, defined by $$D_T (f_k)_{k \in X} = \sum_{k \in X} T^*_k f_k ,$$
    \item The \emph{analysis operator} 
    $C_T : \mathcal{H} \longrightarrow \ell^2(X;\Hil)$, defined by $$C_T f = ( T_k f )_{k \in X} ,$$
    \item The \emph{g-frame operator}
    $S_T := D_T C_T: \mathcal{H} \longrightarrow \mathcal{H}$, given by $$S_T f = \sum_{k\in X} T^*_k T_k f .$$
    \item The \emph{g-Gram matrix} $G_T := C_T D_T : \ell^2(X;\Hil) \longrightarrow \ell^2(X;\Hil)$.
\end{itemize}
These operators share the same properties with their frame-theoretic pendants \cite{ole1n}. More precisely \cite{Sun2006437}, $T$ being a g-Bessel sequence with bound $B$ implies that $D_T$ and $C_T$ are adjoint to one another and $\Vert D_T \Vert_{\B(\ell^2(X;\Hil),\Hil)} = \Vert C_T \Vert_{\B(\Hil, \ell^2(X;\Hil))} \leq \sqrt{B}$. In particular, $\Vert T_k\Vert\leq \sqrt{B}$ for all $k\in X$. Furthermore, both $S_T$ and $G_T$ are self-adjoint and bounded by $B$ in this case. If $T$ is a g-frame, then $C_T$ is bounded, injective and has closed range, $D_T$ is bounded and surjective, and $S_T$ is bounded, self-adjoint, positive and invertible satisfying $A\cdot \mathcal{I}_{\Hil}\leq S_T \leq B\cdot \mathcal{I}_{\Hil}$ (in the sense of positive operators). Consequently, $S_T$ may be composed with its inverse (and vice versa) which yields the possibility of \emph{g-frame reconstruction} via  
\begin{equation}\label{gframerec}
f = \sum_{k\in X} T^*_k T_k S_T^{-1} f = \sum_{k\in X} S_T^{-1} T^*_k T_k f \qquad (\forall f\in \mathcal{H}) .
\end{equation}
The family $\widetilde{T} = (\widetilde{T}_k)_{k\in X}:=(T_k S_T^{-1})_{k\in X}$, which appears in the reconstruction process (\ref{gframerec}), is again a g-frame and called the \emph{canonical dual g-frame}. In particular, if $A\leq B$ are g-frame bounds for $T$, then $B^{-1} \leq A^{-1}$ are g-frame bounds for $\widetilde{T}$ and $B^{-1} \leq \Vert S_T^{-1} \Vert \leq A^{-1}$. More generally \cite{kutpatphi17}, if $(T^d_k)_{k\in X}$ is a g-Bessel sequence such that 
\begin{equation}\label{dualgframe}
f = \sum_{k\in X} T^*_k T^d_k f = \sum_{k\in X} (T^d_k)^* T_k f \qquad (\forall f\in \mathcal{H}), 
\end{equation}
or, equivalently, 
\begin{equation}\label{dualgframeoperatornotation}
\mathcal{I}_{\B(\Hil)} = D_T C_{T^d} = D_{T^d} C_T , 
\end{equation}
then $(T^d_k)_{k\in X}$ is already a g-frame and called a \emph{dual g-frame} (or simply \emph{dual}) of $(T_k)_{k\in X}$. 

\begin{remark}\label{matrixDC}
Assume that $T = (T_k)_{k\in X}$ is a g-Bessel sequence in $\Hil$. Then $C_T \in \B(\Hil, \ell^2(X;\Hil))$, $D_T \in \B(\ell^2(X;\Hil) , \Hil)$ and $G_T\in \B(\ell^2(X;\Hil))$ are all bounded. Since we can interpret $\Hil$ as a direct sum of Hilbert spaces indexed by just a singleton, we see that each of the operators $C_T$, $D_T$ and $G_T$ is a bounded operator from one direct sum of Hilbert spaces to another direct sum of Hilbert spaces. Hence, we are in the setting of the matrix calculus from \cite{Maddox:101881} (see also \cite[Section 3.1]{koebacasheihomosha23} for more details), which guarantees that every bounded operator $A$ between two direct sums of Hilbert spaces can be uniquely represented by a $\B(\Hil)$-valued matrix $\mathbb{M}(A) = [A_{k,l}]$, which acts on elements from a direct sum of Hilbert spaces, viewed as column vectors, via matrix-vector multiplication. Moreover, composition of such operators $A$ and $B$ corresponds to matrix multiplication, i.e. $\mathbb{M}(AB) = \mathbb{M}(A)\cdot \mathbb{M}(B)$, where each entry $[AB]_{k,l} = \sum_{n} A_{k,n} B_{n,l}$ of $\mathbb{M}(AB)$ converges in the operator norm topology with respect to $\B(\Hil)$ \cite{Maddox:101881,koebacasheihomosha23}. In particular, we have  
\begin{flalign}\label{gGrammatrix}
\mathbb{M}(D_T) &= \begin{bmatrix} \dots & T_{k-1}^* & T_k^* &  T_{k+1}^* & \dots \end{bmatrix} ,\notag \\
\mathbb{M}(C_T) &= \begin{bmatrix} \dots & T_{k-1} & T_k &  T_{k+1} & \dots \end{bmatrix}^T , \notag \\
\mathbb{M}(G_T) &= \begin{bmatrix} 
    \ddots & \vdots & \vdots & \, \\
    \dots & T_k T_l^* & T_k T_{l+1}^* & \dots \\
    \dots & T_{k+1} T_l^* & T_{k+1} T_{l+1}^* & \dots \\
    \, & \vdots & \vdots & \ddots \\ 
    \end{bmatrix} .
\end{flalign}
where the exponent $T$ denotes matrix transposition. Analogously, if $U = (U_k)_{k\in X}$ is another g-Bessel sequence in $\Hil$, then the canonical matrix representation of the \emph{mixed g-Gram matrix} $G_{T,U} := C_T D_U \in \B(\ell^2(X;\Hil))$ is the same as in (\ref{gGrammatrix}) after replacing each $T_l^*$ by $U_l^*$. 
\end{remark}

The above mentioned matrix calculus implies that we can identify $\B(\ell^2(X;\Hil))$ with a Banach *-algebra of $\B(\Hil)$-valued matrices, where the involution $^*$ is defined via $([A_{k,l}]_{k,l\in X})^* = [A^*_{l,k}]_{k,l\in X}$ and precisely corresponds to taking adjoints in $\B(\ell^2(X;\Hil))$, meaning that $\mathbb{M}(A^*) = \mathbb{M}(A)^*$ for all $A\in \B(\ell^2(X;\Hil))$. This viewpoint is particularly convenient for defining g-frame localization analogously as done in \cite{forngroech1} for frames, i.e. by g-Gram matrices belonging to some suitable sub-algebra $\A$ of $\B(\ell^2(X;\Hil))$ consisting of $\B(\Hil)$-valued matrices, see ahead.

\section{Localization of operator-valued frames}


\begin{definition}\label{spectralalgebra}
Let $\mathcal{A} = \A(X)$ be a unital Banach *-algebra of $\mathcal{B}(\Hil)$-valued matrices $A=[A_{k,l}]_{k,l\in X}$. Then $\mathcal{A}$ is called a \emph{solid spectral matrix algebra}, if
\begin{itemize}
    \item[(1.)] $\mathcal{A} \subseteq \mathcal{B}(\ell^2(X;\Hil))$, meaning that the inclusion map $\iota: \A \longrightarrow \mathcal{B}(\ell^2(X;\Hil))$ is a unital *-monomorphism (i.e. a faithful representation).
    \item[(2.)] $\mathcal{A}$ is \emph{inverse-closed} in $\mathcal{B}(\ell^2(X;\Hil))$, i.e., if $A \in \mathcal{A}$ defines an invertible operator in $\mathcal{B}(\ell^2(X;\Hil))$, then its inverse $A^{-1} \in \mathcal{B}(\ell^2(X;\Hil))$ is contained in $\mathcal{A}$ as well,
    \item[(3.)] $\mathcal{A}$ is \emph{solid}: if $A = [A_{k,l}]_{k,l \in X} \in \mathcal{A}$ and if $B = [B_{k,l}]_{k,l\in X}$ is a $\mathcal{B}(\Hil)$-valued matrix such that $\Vert B_{k,l} \Vert \leq \Vert A_{k,l} \Vert$ for all $k,l \in X$, then $B \in \mathcal{A}$ and $\Vert B \Vert_{\mathcal{A}} \leq \Vert A \Vert_{\mathcal{A}}$.
\end{itemize}
For brevity reason, we will call such an algebra $\A$ simply a \emph{spectral algebra}.
\end{definition}

Spectral algebras are also \emph{pseudo-inverse-closed} in $\B(\ell^2(X;\Hil))$:

\begin{theorem}\label{charlymain}\cite{forngroech1}
Let $\K$ be a Hilbert space. Suppose that $\A$ is a Banach *-algebra which is inverse-closed in $\B(\K)$ and let $M$ be a closed subspace of $\K$. If $A = A^* \in \A$, $\mathcal{N}(A) = M^{\perp}$ and $A:M\longrightarrow M$ is bounded and invertible, then the Moore-Penrose pseudo-inverse $A^{\dagger}$ of $A$ is an element in $\A$.
\end{theorem}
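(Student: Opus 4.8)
The plan is to exploit that $A$ is block-diagonal with respect to $\K = M \oplus M^{\perp}$ and to manufacture all the projections I need from $A$ itself via a Riesz (holomorphic) functional calculus that provably stays inside $\A$; the only genuinely infinite-dimensional ingredient will be the inverse-closedness of $\A$.

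First I would record the structural facts. Since $A = A^{*}$ and $A : M \to M$ is bijective, $\R(A) = A(M) = M$ is closed, so $A^{\dagger}$ is defined; and $\N(A) = M^{\perp}$ forces $A(M^{\perp}) = 0$, while self-adjointness makes $M^{\perp}$ an $A$-invariant subspace, so $A = (A|_{M}) \oplus 0_{M^{\perp}}$ with respect to $\K = M \oplus M^{\perp}$. Directly from the definition of the Moore--Penrose inverse this gives $A^{\dagger} = (A|_{M})^{-1} \oplus 0_{M^{\perp}}$. If $M = \K$ the claim is immediate, since then $A$ is invertible in $\BL{\K}$ and $A^{\dagger} = A^{-1} \in \A$ by inverse-closedness; so I would henceforth assume $M \neq \K$, whence $0 \in \sigma_{\BL{\K}}(A)$.

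The core step is to produce the orthogonal projection onto $M$ inside $\A$. Applying inverse-closedness to $z\mathcal{I}_{\A} - A$ for every $z \in \CC$ (and using that $\iota$ is a unital monomorphism) yields spectral permanence $\sigma_{\A}(A) = \sigma_{\BL{\K}}(A) =: \Sigma$. Because $A = (A|_{M}) \oplus 0$ with $A|_{M}$ invertible, $\Sigma = \sigma(A|_{M}) \cup \{0\}$ with $0 \notin \sigma(A|_{M})$, a compact set, so $0$ is an isolated point of $\Sigma$; I can therefore pick a circle $\Gamma = \{|z| = \varepsilon\}$ enclosing $\{0\}$ and disjoint from $\Sigma \setminus \{0\}$. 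Define the Riesz projection $P_{0} := \frac{1}{2\pi i}\oint_{\Gamma}(z\mathcal{I}_{\A} - A)^{-1}\, dz$. For $z \in \Gamma$ we have $z \notin \sigma_{\A}(A)$, so $(z\mathcal{I}_{\A} - A)^{-1} \in \A$; the integrand is a norm-continuous $\A$-valued function on the compact contour, hence the integral converges in the Banach space $\A$ and $P_{0} \in \A$. Since the inclusion $\iota$ is a unital $*$-representation of a Banach $*$-algebra into the $C^{*}$-algebra $\BL{\K}$, it is contractive, hence continuous, hence commutes with the contour integral; thus $\iota(P_{0})$ is the $\BL{\K}$-Riesz projection of $A$ for the spectral set $\{0\}$, which, $A$ being self-adjoint, is precisely the orthogonal projection onto $\N(A) = M^{\perp}$.

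Finally I would assemble the answer inside $\A$. Put $P := \mathcal{I}_{\A} - P_{0} \in \A$, so $\iota(P)$ is the orthogonal projection onto $M$. Then $A + P_{0} \in \A$ and, in $\BL{\K}$ with respect to $\K = M \oplus M^{\perp}$, it equals $(A|_{M}) \oplus \mathcal{I}_{M^{\perp}}$, which is invertible; by inverse-closedness $(A + P_{0})^{-1} \in \A$ and equals $(A|_{M})^{-1} \oplus \mathcal{I}_{M^{\perp}}$. Hence $(A + P_{0})^{-1} - P_{0} \in \A$ and in $\BL{\K}$ equals $(A|_{M})^{-1} \oplus 0_{M^{\perp}} = A^{\dagger}$ (equivalently, $A^{\dagger} = P\,(A + P_{0})^{-1}$), so $A^{\dagger} \in \A$. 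The step I expect to require the most care is the functional-calculus bookkeeping of the third paragraph: one must check that the $\A$-valued Riemann integral is legitimate (completeness of $\A$ together with inverse-closedness to keep the resolvent in $\A$) and that it is compatible with the ordinary Riesz calculus in $\BL{\K}$ (automatic continuity of the unital $*$-representation $\iota$). The remaining ingredients — the identification $A = (A|_{M}) \oplus 0$, the formula for $A^{\dagger}$, and the concluding algebra — are routine.
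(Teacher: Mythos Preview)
Your proof is correct. The paper does not actually prove this theorem --- it is quoted verbatim from \cite{forngroech1}, and only the subsequent Corollary \ref{pseudoinverseclosed} is supplied with an argument here. Your route via the Riesz holomorphic functional calculus (spectral permanence from inverse-closedness, isolating $0$ in $\sigma(A)$, building the spectral projection $P_0$ onto $M^{\perp}$ inside $\A$, and recovering $A^{\dagger} = (A+P_0)^{-1} - P_0$) is precisely the standard argument and matches the original proof in \cite{forngroech1}. The one point you flag as delicate --- automatic continuity of the unital $*$-representation $\iota:\A\to\B(\K)$ --- is indeed valid under the usual convention that a Banach $*$-algebra carries an isometric (or at least continuous) involution, via $\Vert \iota(a)\Vert^2 = r_{\B(\K)}(\iota(a^*a)) \leq r_{\A}(a^*a) \leq \Vert a\Vert_{\A}^2$.
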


The above result holds also true for possibly non-self-adjoint operators $A$ \cite{forngroech1}. Since its proof was omitted by the authors, we present the details for completeness reason.

\begin{corollary}\label{pseudoinverseclosed}\cite{forngroech1}
Let $\A$ be a Banach *-algebra, which is inverse-closed in $\B(\K)$. Then $\A$ is also \emph{pseudo-inverse-closed} in $\B(\K)$, that is, if $A\in  \A$ defines a bounded operator in $\B(\K)$ with closed range, then $A^{\dagger} \in \A$.
\end{corollary}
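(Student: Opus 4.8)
The plan is to reduce the general (possibly non-self-adjoint) case to the self-adjoint case already handled by Theorem~\ref{charlymain}. Suppose $A \in \A$ defines a bounded operator in $\B(\K)$ with closed range. First I would form $A^* A \in \A$; since $\A$ is a $*$-algebra, this is legitimate, and $A^*A$ is self-adjoint. Because $A$ has closed range, standard Hilbert space theory gives that $A^*A$ also has closed range, with $\mathcal{N}(A^*A) = \mathcal{N}(A)$ and $\mathcal{R}(A^*A) = \mathcal{R}(A^*) = \mathcal{N}(A)^{\perp}$. Set $M := \mathcal{N}(A)^{\perp} = \overline{\mathcal{R}(A^*A)}$, a closed subspace of $\K$. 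Then $A^*A$ maps $M$ into $M$, is self-adjoint on $\K$ with $\mathcal{N}(A^*A) = M^{\perp}$, and the restriction $A^*A\colon M \to M$ is bounded and injective with closed range equal to $M$, hence invertible on $M$. So Theorem~\ref{charlymain} applies to $A^*A$ and yields $(A^*A)^{\dagger} \in \A$.

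Next I would invoke Lemma~\ref{pseudoinverseformula}, which states $U^{\dagger} = U^*(UU^*)^{\dagger}$ for any bounded closed-range operator between Hilbert spaces. Applying this with $U = A^*$ (which also has closed range, since $A$ does), we get
$$ (A^*)^{\dagger} = A\,(A^*A)^{\dagger}. $$
Taking Hilbert space adjoints of both sides, and using the identity $(A^*)^{\dagger} = (A^{\dagger})^*$ for the pseudo-inverse, gives
$$ A^{\dagger} = \big((A^*)^{\dagger}\big)^* = \big(A (A^*A)^{\dagger}\big)^* = \big((A^*A)^{\dagger}\big)^* A^* = (A^*A)^{\dagger} A^*, $$
where in the last step I used that $(A^*A)^{\dagger}$ is self-adjoint (being the pseudo-inverse of a self-adjoint operator). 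Since $(A^*A)^{\dagger} \in \A$ by the previous paragraph and $A^* \in \A$ because $\A$ is a $*$-algebra, the product $A^{\dagger} = (A^*A)^{\dagger} A^*$ lies in $\A$, which is what we wanted.

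The main thing to be careful about is not a deep obstacle but a bookkeeping point: one must verify that $A^*A$ genuinely satisfies all the hypotheses of Theorem~\ref{charlymain} with the specific subspace $M = \mathcal{N}(A)^{\perp}$ — in particular that $A^*A$ restricted to $M$ is invertible (not merely injective), which relies on $\mathcal{R}(A^*A)$ being closed, and this in turn follows from $\mathcal{R}(A)$ being closed. It is also worth recalling explicitly why $\mathcal{R}(A)$ closed implies $\mathcal{R}(A^*)$ and $\mathcal{R}(A^*A)$ closed, and why $\mathcal{N}(A^*A)=\mathcal{N}(A)$; these are classical facts (if $A^*A f = 0$ then $\|Af\|^2 = \langle A^*A f, f\rangle = 0$, and the range statements follow from the closed range theorem together with $\mathcal{R}(A^*A) = A^*\mathcal{R}(A)$ and $A^*|_{\mathcal{R}(A)}$ having trivial kernel on $\mathcal{R}(A)$). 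Beyond that, the argument is a direct assembly of Theorem~\ref{charlymain}, Lemma~\ref{pseudoinverseformula}, and the elementary identities $(A^{\dagger})^* = (A^*)^{\dagger}$ and self-adjointness of the pseudo-inverse of a self-adjoint operator.
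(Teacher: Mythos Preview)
Your proof is correct and follows essentially the same strategy as the paper: reduce to the self-adjoint case handled by Theorem~\ref{charlymain} via Lemma~\ref{pseudoinverseformula}. The paper's route is marginally more direct---it applies Lemma~\ref{pseudoinverseformula} to $A$ itself (yielding $A^{\dagger} = A^*(AA^*)^{\dagger}$) and then Theorem~\ref{charlymain} to $AA^*$, avoiding your detour through $A^*A$, the application of the lemma to $A^*$, and the subsequent adjoint manipulation.
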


\begin{proof}
By Lemma \ref{pseudoinverseformula}, the pseudo-inverse of $A$ is given by $A^{\dagger} = A^* (AA^*)^{\dagger}$. Since $AA^*$ is self-adjoint and $\mathcal{R}(AA^*) = \mathcal{R}(A)$ is closed, Theorem \ref{charlymain} implies that $(AA^*)^{\dagger} \in \A$. Consequently, $A^{\dagger} \in \A$ due to the algebra property of $\A$.
\end{proof}

\begin{ex}\label{spectralexamples}
Recently, the following examples of spectral algebras of $\B(\Hil)$-valued matrices indexed by a relatively separated set $X\subset \Rd$ haven been discussed \cite{koeba25}:

\noindent\emph{(1.)} For every $s>d$, the \emph{Jaffard algebra} $\J_s = \J_s(X)$ of $\B(\Hil)$-valued matrices $A = [A_{k,l}]_{k,l \in X}$ for which 
$$\sup_{k,l\in X} \Vert A_{k,l} \Vert (1+\vert k-l \vert)^s < \infty$$
is a spectral algebra.

\noindent\emph{(2.)} For every weight $\nu$ which
\begin{itemize}
    \item[(2a)] is of the form $\nu(x) = e^{\rho(\Vert x \Vert)}$, where $\rho:[0,\infty) \longrightarrow [0,\infty)$ is a continuous and concave function with $\rho(0) = 0$, and $\Vert \, . \, \Vert$ any norm on $\mathbb{R}^d$,
    \item[(2b)] satisfies the GRS-condition,
    \item[(2c)] satisfies the weak growth condition 
$$\nu(x) \geq C (1+\vert x \vert)^{\delta} \qquad \text{for some } \delta\in (0,1], C>0,$$
\end{itemize}
the weighted \emph{Schur algebra} $\MS^1_{\nu}=\MS^1_{\nu}(X)$ of $\B(\Hil)$-valued matrices $A = [A_{k,l}]_{k,l \in X}$ for which
$$\max \left\lbrace \sup_{k\in X} \sum_{l\in X} \Vert A_{k,l} \Vert \nu(k-l) \, , \, \sup_{l\in X} \sum_{k\in X} \Vert A_{k,l} \Vert \nu(k-l)\right\rbrace < \infty$$
is a spectral algebra.

\noindent\emph{(3.)} For every weight $\nu$, which is submultiplicative, symmetric, and satisfies the GRS-condition, the weighted \emph{Baskakov-Gohberg-Sjöstrand algebra} $\C_{\nu} = \C_{\nu}(\mathbb{Z}^d)$ of $\B(\Hil)$-valued matrices $A=[A_{k,l}]_{k,l\in \mathbb{Z}^d}$ for which
$$\Vert A \Vert_{\C_{\nu}}:= \sum_{l\in \mathbb{Z}^d} \sup_{k\in \mathbb{Z}^d} \Vert A_{k,k-l} \Vert \nu(l) <\infty$$
is a spectral algebra contained in $\B(\ell^2(\mathbb{Z}^d;\Hil))$.
 
\noindent\emph{(4.)} Spectral algebras $\J_{\nu}(X)$ which model more general off-diagonal decay than polynomial decay (and hence generalize $\J_s$).

\noindent\emph{(5.)} Anisotropic variations of all of the latter mentioned spectral algebras.
\end{ex}

With the above examples of spectral algebras in mind, we now give the main definition of this article, which extends the localization concept from \cite{forngroech1} to the g-frame setting. We remark that the special case $\A = \C_{\nu}(\mathbb{Z}^d)$ of Definition \ref{defloc} is \cite[Definition 5.6]{Krishtal2011}.

\begin{definition}\label{defloc}
Let $\A = \A(X)$ be a spectral algebra and $T=(T_k)_{k\in X}$ be a $\B(\Hil)$-valued family indexed by $X$. We say that $T$ is \emph{intrinsically $\A$-localized}, in short $T \sim_{\A} T$, if its associated $g$-Gram matrix $G_T = [T_k T_l^*]_{k,l\in X}$ belongs to $\A$. If $U = (U_k)_{k\in X}$ is another $\B(\Hil)$-valued family indexed by $X$, then we call $U$ and $T$ \emph{mutually $\A$-localized}, in short $U\sim_{\A} T$, if the mixed g-Gram matrix $G_{U,T} = [U_k T_l^*]_{k,l\in X}$ belongs to $\A$. 
\end{definition}

By the result below, we may always assume that intrinsically localized $\B(\Hil)$-valued sequences are g-Bessel sequences.

\begin{proposition}\label{locgBessel}
Every intrinsically localized $\B(\Hil)$-valued sequence is a g-Bessel sequence.
\end{proposition}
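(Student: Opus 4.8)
The plan is to extract the g-Bessel bound directly from the boundedness of the g-Gram matrix $G_T = [T_k T_l^*]_{k,l\in X}$ as an operator on $\ell^2(X;\Hil)$, which is available because $\A \subseteq \B(\ell^2(X;\Hil))$ by part (1.) of Definition \ref{spectralalgebra}. Set $B := \Vert G_T \Vert_{\B(\ell^2(X;\Hil))} < \infty$. The key point to observe is that, although we do not yet know that the sequence $(T_k f)_{k\in X}$ is square-summable, the quadratic form of $G_T$ evaluated on \emph{finitely supported} sequences already carries exactly the information we need; equivalently, for every finitely supported $g = (g_k)_{k\in X}$ one has $\langle G_T g, g\rangle = \Vert \sum_{k} T_k^* g_k \Vert^2$, so $G_T$ is in particular a positive operator.

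First I would fix a finite subset $F \subseteq X$ and introduce the truncated analysis operator $C_F : \Hil \longrightarrow \ell^2(X;\Hil)$, $C_F f = (\chi_F(k) T_k f)_{k\in X}$, which is bounded, being a finite sum of bounded operators composed with coordinate embeddings. Its Hilbert space adjoint is the truncated synthesis operator $D_F := C_F^* : \ell^2(X;\Hil) \longrightarrow \Hil$, $D_F (g_k)_{k\in X} = \sum_{k\in F} T_k^* g_k$. A direct computation then shows $C_F D_F = P_F\, G_T\, P_F$, where $P_F$ denotes the coordinate projection of $\ell^2(X;\Hil)$ onto the sequences supported on $F$.

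Next I would combine the $C^*$-identity with the elementary estimate $\Vert P_F G_T P_F \Vert \leq \Vert G_T \Vert = B$: since $C_F D_F = C_F C_F^*$, we get $\Vert C_F \Vert^2 = \Vert C_F C_F^* \Vert = \Vert P_F G_T P_F \Vert \leq B$. Hence for every $f\in \Hil$,
$$\sum_{k\in F} \Vert T_k f \Vert^2 = \Vert C_F f \Vert^2 \leq \Vert C_F \Vert^2 \Vert f \Vert^2 \leq B \Vert f \Vert^2 .$$
Finally, choosing an increasing sequence of finite sets exhausting the countable index set $X$ and invoking monotone convergence yields $\sum_{k\in X}\Vert T_k f\Vert^2 \leq B\Vert f\Vert^2$ for all $f\in\Hil$, which is precisely the g-Bessel inequality with bound $B$.

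I do not expect a serious obstacle; the only thing to be careful about is to avoid circular reasoning. The operators $C_T, D_T, G_T$ from Subsection \ref{g-frames} are a priori defined (and bounded) only once $T$ is known to be a g-Bessel sequence, so one must argue via the finitely truncated operators $C_F, D_F$ (or, equivalently, via the quadratic form identity above followed by passage to the adjoint) rather than by quoting $G_T = C_T C_T^*$. Note that the proof uses only inclusion (1.) of Definition \ref{spectralalgebra} — neither inverse-closedness nor solidity is needed — and it produces the explicit bound $B = \Vert G_T\Vert_{\B(\ell^2(X;\Hil))}$.
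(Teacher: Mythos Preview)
Your proof is correct and follows essentially the same approach as the paper: both extract the g-Bessel bound $B=\Vert G_T\Vert_{\B(\ell^2(X;\Hil))}$ from the quadratic form identity $\langle G_T g,g\rangle = \Vert \sum_k T_k^* g_k\Vert^2$ on finitely supported $g$, the only cosmetic difference being that the paper bounds $D_T$ on $\ell^{00}(X;\Hil)$ and then passes to the adjoint, while you package the same computation via the truncated operators $C_F$ and the $C^*$-identity.
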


\begin{proof}
Assume that $T=(T_k)_{k\in X}$ is an intrinsically localized $\B(\Hil)$-valued family indexed by $X$. Then $G_T$ is contained in some spectral algebra $\A \subseteq \B(\ell^2(X;\Hil))$ and thus well-defined on $\ell^2(X;\Hil)$, bounded and self-adjoint. This implies that $D_T$ is bounded, since
$$\Vert D_T (h_l)_{l\in X} \Vert^2 = \langle G_T (h_l)_{l\in X} , (h_l)_{l\in X} \rangle_{\ell^2(X;\Hil)} \leq \Vert G_T\Vert_{\B(\ell^2(X;\Hil))} \Vert (h_l)_{l\in X} \Vert_{\ell^2(X;\Hil))}^2$$
holds true for all $(h_l)_{l\in X} \in \ell^{00}(X;\Hil)$ and, by density, also for all $(h_l)_{l\in X} \in \ell^2(X;\Hil)$. In particular, $C_T = D_T^*$ is bounded, which implies that $T$ is a g-Bessel sequence.
\end{proof}

\subsection{Intrinsic localization and duality}

Next, we will show that intrinsic localization of a g-frame is preserved by its canonical dual g-frame. Proving this relies heavily on the properties of a spectral algebra.

The following preparatory result is the g-frame analogue of \cite[Lemma 2.14]{hol22}.

\begin{lemma}\label{gGramfactorization}
Let $T=(T_k)_{k\in X}$ be a g-frame and $\widetilde{T} = (T_k S_T^{-1})_{k\in X}$ its canonical dual g-frame.
Then $G_T$ has closed range and
\begin{equation}\label{dualgGramformulas}
    G_{\widetilde{T}} = G_T^{\dagger} \qquad \text{and} \qquad G_{T,\widetilde{T}} = G_T G_T^{\dagger} .
\end{equation}
\end{lemma}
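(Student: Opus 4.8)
The plan is to exploit the factorizations of the synthesis and analysis operators through the g-frame operator. Recall that $\widetilde T_k = T_k S_T^{-1}$, so on the level of operators $C_{\widetilde T} = C_T S_T^{-1}$ and $D_{\widetilde T} = S_T^{-1} D_T$ (using self-adjointness of $S_T^{-1}$). First I would establish the closed-range claim: since $T$ is a g-frame, $C_T$ is bounded, injective and has closed range, hence $D_T = C_T^*$ is bounded and surjective. Therefore $G_T = C_T D_T$ has range $\mathcal R(C_T D_T) = C_T(\mathcal R(D_T)) = C_T(\Hil) = \mathcal R(C_T)$, which is closed. This also shows $\mathcal N(G_T) = \mathcal N(D_T) = \mathcal R(C_T)^{\perp}$, because $C_T$ is injective on $\mathcal R(D_T) = \Hil$.

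Next I would verify the formula $G_{\widetilde T} = G_T^{\dagger}$ by checking the three Moore-Penrose relations from Lemma \ref{pseudoinversechar}. We have $G_{\widetilde T} = C_{\widetilde T} D_{\widetilde T} = C_T S_T^{-1} S_T^{-1} D_T = C_T S_T^{-2} D_T$. Its range is contained in $\mathcal R(C_T) = \mathcal R(G_T)$, and since $S_T^{-2} D_T$ is surjective onto $\Hil$ (as $D_T$ is surjective and $S_T^{-2}$ invertible) while $C_T$ is injective, we get $\mathcal R(G_{\widetilde T}) = \mathcal R(C_T) = \mathcal N(G_T)^{\perp}$. Dually, $\mathcal N(G_{\widetilde T}) = \mathcal N(D_T) = \mathcal R(G_T)^{\perp}$, since $C_T S_T^{-2}$ is injective. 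It remains to check $G_T G_{\widetilde T} G_T = G_T$: compute $G_T G_{\widetilde T} G_T = C_T D_T C_T S_T^{-2} D_T C_T D_T$. Here $D_T C_T = S_T$, so the middle collapses to $C_T S_T S_T^{-2} S_T D_T = C_T D_T = G_T$. By uniqueness in Lemma \ref{pseudoinversechar}, $G_{\widetilde T} = G_T^{\dagger}$.

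For the mixed Gram matrix, $G_{T,\widetilde T} = C_T D_{\widetilde T} = C_T S_T^{-1} D_T$. On the other hand $G_T G_T^{\dagger} = G_T G_{\widetilde T} = C_T D_T C_T S_T^{-2} D_T = C_T S_T S_T^{-2} D_T = C_T S_T^{-1} D_T$, which matches. (Alternatively one notes $G_T G_T^{\dagger}$ is the orthogonal projection onto $\mathcal R(G_T) = \mathcal R(C_T)$, and $C_T S_T^{-1} D_T$ is readily seen to be exactly that projection since $C_T S_T^{-1} D_T C_T = C_T S_T^{-1} S_T = C_T$.)

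I do not anticipate a serious obstacle here; the only point requiring a little care is bookkeeping the kernels and ranges correctly — in particular justifying $\mathcal N(G_T) = \mathcal R(C_T)^\perp$ and the injectivity statements used to pin down $\mathcal R(G_{\widetilde T})$ and $\mathcal N(G_{\widetilde T})$ — since these rely on combining injectivity of $C_T$, surjectivity of $D_T$, and invertibility of $S_T$. Everything else is a direct operator computation using $D_T C_T = S_T$ and self-adjointness of $S_T^{-1}$.
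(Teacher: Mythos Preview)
Your proposal is correct and follows essentially the same route as the paper: both arguments verify the three Moore--Penrose relations of Lemma~\ref{pseudoinversechar} for $G_{\widetilde T}$ after first identifying $\mathcal R(G_T)=\mathcal R(C_T)$ via surjectivity of $D_T$ and injectivity of $C_T$. The only cosmetic difference is that the paper phrases the computations through the abstract dual-frame identity $D_T C_{\widetilde T}=\mathcal I_{\Hil}$ (equation~(\ref{dualgframeoperatornotation})), whereas you expand $G_{\widetilde T}=C_T S_T^{-2}D_T$ explicitly and use $D_T C_T=S_T$; these are the same calculation in slightly different notation.
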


\begin{proof}
By definition of the analysis operator and the g-frame inequalities (\ref{gframedef}), $T=(T_k)_{k\in X}$ is a g-frame if and only if $C_T :\Hil \longrightarrow \ell^2(X;\Hil)$ is well-defined, bounded and bounded from below. This is equivalent to $C_T$ being a bounded, injective and closed-range operator and $D_T = C_T^*$ being bounded and surjective \cite{conw1}. In particular, $\R(G_T) = \R(C_T D_T) = \R(C_T)$ is closed and thus $G_T^{\dagger}$ is well-defined. 

In order to show the first relation in (\ref{dualgGramformulas}), we verify that $G_{\widetilde{T}}$ satisfies the three characterizing conditions of the pseudo-inverse of $G_T$ from Lemma \ref{pseudoinversechar}. Firstly, since $C_{\widetilde{T}} = C_T S_T^{-1}$, we see that $\N(G_{\widetilde{T}}) = \R(G_{\widetilde{T}})^{\perp} = \R(C_{\widetilde{T}})^{\perp} = \R(C_{T} S_T^{-1})^{\perp} = \R(C_T)^{\perp} = \R(G_T)^{\perp}$, where we also used the fact that g-Gram matrices associated to g-Bessel sequences are self-adjoint. Secondly, we similarly have $\R(G_{\widetilde{T}}) = \R(C_{\widetilde{T}}) = \R(C_{T} S_T^{-1}) = \R(C_T) = \R(G_T) = \N(G_T)^{\perp}$. Finally, we have $G_T G_{\widetilde{T}} G_T = C_T D_T C_{\widetilde{T}} D_{\widetilde{T}} C_T D_T = C_T D_T = G_T$ by (\ref{dualgframeoperatornotation}).

By using (\ref{dualgframeoperatornotation}) again, the second relation in (\ref{dualgGramformulas}) follows from the first via $G_T G_T^{\dagger} = G_T G_{\widetilde{T}} = C_T D_T C_{\widetilde{T}} D_{\widetilde{T}} = C_T D_{\widetilde{T}} = G_{T,\widetilde{T}}$.
\end{proof}

We are now prepared to prove our first main result, which is the g-frame theoretic analogue of \cite[Theorem 3.6]{forngroech1} and a generalization of \cite[Lemma 5.6]{Krishtal2011}, where the case $\A=\C_{\nu}(\mathbb{Z}^d)$ is treated. 

\begin{theorem}\label{duallocalized}
Let $T=(T_k)_{k\in X}$ be an intrinsically $\A$-localized g-frame. Then the following hold:
\begin{itemize}
    \item[(i)] The canonical dual g-frame $\widetilde{T} = (T_kS_T^{-1})_{k\in X}$ is intrinsically $\A$-localized.
    \item[(ii)] $T$ and $\widetilde{T}$ are mutually $\A$-localized.
\end{itemize}
\end{theorem}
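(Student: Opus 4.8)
The plan is to derive both statements directly from the factorization identities of Lemma \ref{gGramfactorization}, exploiting that spectral algebras are pseudo-inverse-closed (Corollary \ref{pseudoinverseclosed}). First I would unwind the hypothesis: by Definition \ref{defloc}, ``$T$ intrinsically $\A$-localized'' is exactly the statement $G_T \in \A$, and since $T$ is a g-frame, Lemma \ref{gGramfactorization} tells me that $G_T$ has closed range. A spectral algebra is, by conditions (1.) and (2.) of Definition \ref{spectralalgebra}, a Banach *-algebra that is inverse-closed in $\B(\ell^2(X;\Hil))$, so I may apply Corollary \ref{pseudoinverseclosed} with $\K = \ell^2(X;\Hil)$ to conclude that $G_T^{\dagger} \in \A$.

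With this in hand, (i) is immediate: Lemma \ref{gGramfactorization} identifies $G_{\widetilde{T}} = G_T^{\dagger}$, hence $G_{\widetilde{T}} \in \A$, which by Definition \ref{defloc} says precisely that $\widetilde{T}$ is intrinsically $\A$-localized.

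For (ii), I would again invoke Lemma \ref{gGramfactorization}, which also gives $G_{T,\widetilde{T}} = G_T G_T^{\dagger}$. Since both $G_T$ and $G_T^{\dagger}$ lie in $\A$ and $\A$ is an algebra, the product $G_{T,\widetilde{T}}$ lies in $\A$, i.e., $T$ and $\widetilde{T}$ are mutually $\A$-localized. The symmetric statement $G_{\widetilde{T},T} \in \A$ then follows by taking adjoints, since $\A$ is closed under the $*$-operation and $G_{\widetilde{T},T} = G_{T,\widetilde{T}}^{*}$.

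I do not expect a genuine obstacle: once Lemma \ref{gGramfactorization} and Corollary \ref{pseudoinverseclosed} are available the argument is purely a matter of assembling them. The only points requiring attention are checking that the hypotheses of Corollary \ref{pseudoinverseclosed} are met in this setting — that a spectral algebra is inverse-closed in $\B(\ell^2(X;\Hil))$ (Definition \ref{spectralalgebra}) and that the g-Gram matrix of a g-frame has closed range (Lemma \ref{gGramfactorization}) — and keeping straight the identification of $\A$ with its faithful image in $\B(\ell^2(X;\Hil))$ when passing between the operator identities of Lemma \ref{gGramfactorization} and membership in $\A$. The substantive work has already been carried out in the preparatory lemma and corollary.
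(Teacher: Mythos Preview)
Your proposal is correct and follows essentially the same approach as the paper: both invoke Lemma \ref{gGramfactorization} for the identities $G_{\widetilde{T}} = G_T^{\dagger}$ and $G_{T,\widetilde{T}} = G_T G_T^{\dagger}$, then apply Corollary \ref{pseudoinverseclosed} and the algebra property of $\A$. Your write-up is slightly more detailed in verifying the hypotheses of Corollary \ref{pseudoinverseclosed} and in noting the symmetric relation $G_{\widetilde{T},T} = G_{T,\widetilde{T}}^{*} \in \A$, but the argument is the same.
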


\begin{proof}
By assumption we have $G_T \in \A$. By Lemma \ref{gGramfactorization}, $G_T^{\dagger}$ is well-defined, $G_{\widetilde{T}} = G_T^{\dagger}$ and $G_{T,\widetilde{T}} = G_T G_T^{\dagger}$. By Corollary \ref{pseudoinverseclosed}, $\A$ is pseudo-inverse-closed, hence $G_{\widetilde{T}} = G_T^{\dagger} \in \A$. Since $\A$ is an algebra, we obtain $G_{T,\widetilde{T}} = G_T G_T^{\dagger} \in \A$.
\end{proof}

From Theorem \ref{duallocalized}, we can derive the following analogue of \cite[Corollary 2.16]{hol22}, see also \cite[Lemma 2.13]{hol22}.

\begin{theorem}\label{equivalencerelation}
The relation $\sim_{\A}$ is symmetric. Furthermore, $\sim_{\A}$ is an equivalence relation on the set of all intrinsically $\A$-localized g-frames indexed by $X$.
\end{theorem}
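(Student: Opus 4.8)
The plan is to check reflexivity, symmetry and transitivity of $\sim_{\A}$ in turn; the first two are essentially formal and rely only on $\A$ being a $*$-algebra together with Definition \ref{defloc}, while transitivity is where the algebra structure and the duality results of this section (Lemma \ref{gGramfactorization}, Corollary \ref{pseudoinverseclosed}, Theorem \ref{duallocalized}) enter. For \emph{symmetry}, suppose $U \sim_{\A} T$, i.e. $G_{U,T} = [U_k T_l^*]_{k,l\in X} \in \A$. By the matrix calculus recalled in Remark \ref{matrixDC}, the involution on $\A$ coincides with the adjoint in $\B(\ell^2(X;\Hil))$ and acts entrywise by $([A_{k,l}]_{k,l\in X})^* = [A_{l,k}^*]_{k,l\in X}$, so $(G_{U,T})^* = [(U_l T_k^*)^*]_{k,l\in X} = [T_k U_l^*]_{k,l\in X} = G_{T,U}$. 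Since $\A$ is closed under $^*$, this gives $G_{T,U} \in \A$, that is $T \sim_{\A} U$; note no g-frame hypothesis is used, so $\sim_{\A}$ is symmetric as a relation on all $\B(\Hil)$-valued families. \emph{Reflexivity} on the set of intrinsically $\A$-localized g-frames is immediate, since by Definition \ref{defloc} the statement $T \sim_{\A} T$ is just $G_T \in \A$.

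For \emph{transitivity}, let $T, U, V$ be intrinsically $\A$-localized g-frames with $T \sim_{\A} U$ and $U \sim_{\A} V$, so $G_{T,U} = C_T D_U \in \A$ and $G_{U,V} = C_U D_V \in \A$; I must show $G_{T,V} = C_T D_V \in \A$. Since $U$ is a g-frame, the reconstruction formula (\ref{gframerec}) in the operator form (\ref{dualgframeoperatornotation}) applied to $U$ and its canonical dual $\widetilde{U} = (U_k S_U^{-1})_{k\in X}$ gives $\mathcal{I}_{\Hil} = D_U C_{\widetilde{U}} = D_{\widetilde{U}} C_U$. Inserting these two copies of the identity between $C_T$ and $D_V$ and regrouping the resulting composition (which corresponds to the product in $\A$ by Remark \ref{matrixDC}) yields
$$G_{T,V} = C_T D_V = C_T (D_U C_{\widetilde{U}})(D_{\widetilde{U}} C_U) D_V = (C_T D_U)(C_{\widetilde{U}} D_{\widetilde{U}})(C_U D_V) = G_{T,U}\, G_{\widetilde{U}}\, G_{U,V}.$$
Here $G_{T,U}, G_{U,V} \in \A$ by hypothesis, and $G_{\widetilde{U}} \in \A$ because $U$ is an intrinsically $\A$-localized g-frame: Theorem \ref{duallocalized}(i) (equivalently, Lemma \ref{gGramfactorization} together with the pseudo-inverse-closedness from Corollary \ref{pseudoinverseclosed}, which give $G_{\widetilde{U}} = G_U^{\dagger} \in \A$) shows $\widetilde{U}$ is intrinsically $\A$-localized. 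As $\A$ is an algebra, the product $G_{T,U}\, G_{\widetilde{U}}\, G_{U,V}$ lies in $\A$, hence $G_{T,V} \in \A$, i.e. $T \sim_{\A} V$. Combining reflexivity, symmetry and transitivity shows that $\sim_{\A}$ is an equivalence relation on the set of intrinsically $\A$-localized g-frames indexed by $X$.

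The only genuinely non-formal input is the factorization $G_{T,V} = G_{T,U}\, G_{\widetilde{U}}\, G_{U,V}$ in the transitivity step, and the main point to be careful about is purely bookkeeping: placing the identities $D_U C_{\widetilde{U}} = \mathcal{I}_{\Hil}$ and $D_{\widetilde{U}} C_U = \mathcal{I}_{\Hil}$ so that the regrouped triple product is exactly a product of (mixed) g-Gram matrices, and invoking the fact (Remark \ref{matrixDC}) that composition of these bounded operators between direct sums of Hilbert spaces corresponds to the matrix multiplication defining the algebra product in $\A$. Everything else is a consequence of $\A$ being a solid spectral $*$-algebra and of the duality statements already established in this section.
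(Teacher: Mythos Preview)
Your proof is correct and follows essentially the same approach as the paper: symmetry via the $*$-closure of $\A$, reflexivity by definition, and transitivity by inserting dual g-frame reconstruction identities to factor the mixed g-Gram matrix as a product of g-Gram matrices known to lie in $\A$. The only difference is cosmetic: the paper inserts an additional identity $D_{\widetilde{T}}C_T = \mathcal{I}_{\Hil}$ and obtains the four-factor decomposition $G_{T,V} = G_{T,\widetilde{T}}\, G_{T,U}\, G_{\widetilde{U}}\, G_{U,V}$, whereas your three-factor version $G_{T,V} = G_{T,U}\, G_{\widetilde{U}}\, G_{U,V}$ is slightly more economical and requires only Theorem~\ref{duallocalized}(i) rather than both (i) and (ii).
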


\begin{proof}
If $G_{T,U} \in \A$, then $G_{U,T} = (G_{T,U})^* \in \A$, hence $\sim_{\A}$ is symmetric.

Clearly, $\sim_{\A}$ is a reflexive relation on the set of all intrinsically $\A$-localized g-frames on $\Hil$. Now, let $R = (R_k)_{k\in X}$, $T = (T_k)_{k\in X}$ and $U = (U_k)_{k\in X}$ be intrinsically $\A$-localized g-frames. To show transitivity, assume that $T\sim_{\A} U$ and $U\sim_{\A} R$. By Theorem \ref{duallocalized} we have  $T\sim_{\A}\widetilde{T}$ and $\widetilde{U} \sim_{\A} \widetilde{U}$, where $\widetilde{T}$ and $\widetilde{U}$ denote the canonical dual g-frames of $T$ and $U$ respectively. Thus, by g-frame reconstruction with respect to the dual frame pairs $(T,\widetilde{T})$ and $(U,\widetilde{U})$ we obtain via (\ref{dualgframeoperatornotation}) that
$$G_{T,R} = C_T D_R = C_T D_{\widetilde{T}} C_T D_U C_{\widetilde{U}} D_{\widetilde{U}} C_U D_R = G_{T,\widetilde{T}} G_{T,U} G_{\widetilde{U}} G_{U,R} \in \A ,$$
i.e. $T \sim_{\A} R$. 
\end{proof}

\subsection{Operator-valued frames and associated (quasi-) Banach spaces}

One of the most remarkable features of localized frames is that their associated frame reconstruction formulae not only hold true in the underlying Hilbert space, but also in a whole class of associated (quasi-)Banach spaces. In this section we show that analogous results hold true in the g-frame setting. In particular, we will consider the associated g-frame analysis and synthesis operators acting on other Banach spaces than $\Hil$ and $\ell^2(X;\Hil)$ respectively. In order to avoid tedious distinctions of these operators defined on different spaces, we will simply consider them as $\B(\Hil)$-valued matrices, as justified in Remark \ref{matrixDC}.

\subsubsection{Admissible weights}

One ingredient of the associated (quasi-)Banach spaces we will introduce are a certain class of weight functions, see below.

\begin{definition}\label{admissibleweightdef}\cite{forngroech1}
Let $\A$ be some subfamily of $\B(\ell^2(X;\Hil))$ (for instance, a spectral algebra) and $0<p_0\leq 1$. A weight function $\omega$ is called \emph{$(\A,p_0)$-admissible}, if every $A\in \A$ defines a bounded operator $\ell^p_{\omega}(X;\Hil) \longrightarrow \ell^p_{\omega}(X;\Hil)$ for every $p\in (p_0, \infty ] \cup \lbrace 1 \rbrace$. 
\end{definition}

\begin{ex}\label{admissibleex}
A weight being $(\A,p_0)$-admissible is a fairly strong assumption. However, recently the following examples of $(\A,p_0)$-admissible weights $\omega$ have been shown to exist \cite{koeba25}:

\noindent\emph{(1.)} If $s>d+r$ for some $r\geq 0$, and $\nu_r$ denotes the polynomial weight $\nu_r(x) = (1+\vert x \vert)^r$, then every $\nu_r$-moderate weight $\omega$ is $(\J_s,\frac{d}{s-r})$-admissible, where $\J_s$ denotes the Jaffard algebra from Example \ref{spectralexamples} (1.).

\noindent\emph{(2.)} If $\MS^1_{\nu}$ denotes the weighted \emph{Schur algebra} from Example \ref{spectralexamples} (2.), then every $\nu$-moderate weight $\omega$ is $(\MS^1_{\nu},1)$-admissible.

\noindent\emph{(3.)} If $\C_{\nu}$ denotes the Baskakov-Gohberg-Sjöstrand algebra from Example \ref{spectralexamples} (3.), then every $\nu$-moderate weight $\omega$ is $(\C_{\nu},1)$-admissible.
\end{ex}

The following result (whose scalar-analogue was stated in \cite{xxlgro14}) will be convenient later on.   

\begin{lemma}\label{1/wadmissible}
Let $\A$ be a spectral algebra and $\omega$ be $(\A,p_0)$-admissible. Then $1/\omega$ is $(\A,1)$-admissible.
\end{lemma}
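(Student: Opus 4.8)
The statement claims that if $\omega$ is $(\A,p_0)$-admissible, then $1/\omega$ is $(\A,1)$-admissible. By Definition \ref{admissibleweightdef}, this means I must show that every $A\in\A$ defines a bounded operator on $\ell^p_{1/\omega}(X;\Hil)$ for every $p\in(1,\infty]\cup\{1\}=[1,\infty]$. The natural approach is a duality argument at the endpoints $p=1$ and $p=\infty$, followed by interpolation for $1<p<\infty$. First I would record what the admissibility of $\omega$ gives: every $A\in\A$ is bounded on $\ell^1_\omega(X;\Hil)$ and on $\ell^\infty_\omega(X;\Hil)$ (these are in the allowed range $(p_0,\infty]\cup\{1\}$). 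The key observation is that $\A$ is a $*$-algebra, so $A^*=([A_{l,k}^*])\in\A$ as well, and the Banach-space adjoint of $A$ acting between weighted Bochner spaces is given by formula (\ref{BSadjoint}), namely $A' = ([A_{k,l}^*]_{k,l})^T = A^*$ as a $\B(\Hil)$-valued matrix (up to the identification of $(\ell^p_\omega(X;\Hil))^*$ with $\ell^q_{1/\omega}(X;\Hil^*)$ described in the Preliminaries, and using that $\Hil$ is a Hilbert space so $\Hil^*\cong\Hil$).

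The main steps: \textbf{(Step 1, $p=\infty$.)} Since $A^*\in\A$ and $\omega$ is $(\A,p_0)$-admissible, $A^*$ is bounded on $\ell^1_\omega(X;\Hil)$. Using the identification $(\ell^1_\omega(X;\Hil))^*\cong\ell^\infty_{1/\omega}(X;\Hil)$ and (\ref{BSadjoint}), the Banach-space adjoint of $A^*\colon\ell^1_\omega(X;\Hil)\to\ell^1_\omega(X;\Hil)$ is $(A^*)^T$ acting on $\ell^\infty_{1/\omega}(X;\Hil)$ — but I actually want $A$ itself, so I should instead start from $A\in\A$ bounded on $\ell^1_\omega$ and dualize: its adjoint $A'=(([A_{k,l}^*])^T)= (A^*)$ acts boundedly on $\ell^\infty_{1/\omega}(X;\Hil)$; wait — more carefully, $A'$ as a matrix equals $[A^*_{l,k}]_{k,l}$, which is exactly the $*$-involution applied to $A$, hence lies in $\A$. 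So rather than tracking which matrix is which, the clean route is: $A\in\A\Rightarrow A^*\in\A\Rightarrow A^*$ bounded on $\ell^1_\omega(X;\Hil)\Rightarrow (A^*)'=A$ bounded on the dual $\ell^\infty_{1/\omega}(X;\Hil)$, where I used $(A^*)'=A$ by (\ref{BSadjoint}). \textbf{(Step 2, $p=1$.)} Symmetrically, $A\in\A\Rightarrow A^*\in\A\Rightarrow A^*$ bounded on $\ell^\infty_\omega(X;\Hil)$. Here I need a predual argument: $\ell^1_{1/\omega}(X;\Hil)$ sits inside $(\ell^\infty_\omega(X;\Hil))^*$ isometrically (or one works with the genuine predual $\ell^0_\omega$, using \cite[Theorem 3.1]{YaSr14} which identifies $(\ell^0_\omega(X;B))^*$ with $\ell^1_{1/\omega}(X;B^*)$), so $A^*$ bounded on $\ell^0_\omega(X;\Hil)$ implies its adjoint $A$ is bounded on $\ell^1_{1/\omega}(X;\Hil)$; one must first check $A^*$ maps $\ell^0_\omega$ to itself (which follows since finitely supported sequences are dense and $A^*$ has the right mapping property on them, or from solidity/boundedness on $\ell^\infty_\omega$). \textbf{(Step 3, $1<p<\infty$.)} Apply Riesz–Thorin interpolation (Proposition \ref{Riesz-Thorin}) with the weight $1/\omega$ in the role of $\nu$: from $A\in\B(\ell^1_{1/\omega}(X;\Hil))$ and $A\in\B(\ell^\infty_{1/\omega}(X;\Hil))$ we get $A\in\B(\ell^p_{1/\omega}(X;\Hil))$ for all $1<p<\infty$. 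Together with Steps 1–2 this covers all $p\in[1,\infty]$, which is exactly $(\A,1)$-admissibility.

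\textbf{The main obstacle} is the bookkeeping around duality for $\ell^\infty$ and $\ell^0$ spaces: $\ell^\infty_\omega(X;\Hil)$ is not reflexive, so to pass boundedness of $A^*$ on $\ell^\infty_\omega$ down to boundedness of $A$ on $\ell^1_{1/\omega}$ one cannot simply take adjoints of $A^*$ on $\ell^\infty_\omega$ (that would land in the bidual). The correct move is to view $\ell^1_{1/\omega}(X;\Hil)$ as the dual of the \emph{predual} space $\ell^0_\omega(X;\Hil)$ (invoking \cite[Theorem 3.1]{YaSr14} and the Preliminaries' identification, together with $\Hil^*\cong\Hil$), check that $A^*$ — being in $\A$ and bounded on $\ell^\infty_\omega$ — restricts to a bounded operator on $\ell^0_\omega(X;\Hil)$ (using that $\ell^{00}(X;\Hil)$ is dense in $\ell^0_\omega(X;\Hil)$ and $A^*$ maps $\ell^{00}$ into $\ell^0_\omega$ with the uniform bound inherited from $\ell^\infty_\omega$), and only then take the Banach-space adjoint, which by (\ref{BSadjoint}) is $A$ itself acting on $\ell^1_{1/\omega}(X;\Hil)$. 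Everything else — the $p=\infty$ endpoint and the interpolation — is routine given the results already in the excerpt.
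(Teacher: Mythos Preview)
Your proof is correct but takes a genuinely different route from the paper, particularly at the endpoint $p=1$. Both proofs handle $1<p\leq\infty$ by duality (the paper dualizes $A^*\in\B(\ell^p_\omega(X;\Hil))$ for each $p\in[1,\infty)$ directly to get $A\in\B(\ell^q_{1/\omega}(X;\Hil))$ for $q\in(1,\infty]$, whereas you obtain only the $q=\infty$ endpoint this way and then interpolate via Proposition~\ref{Riesz-Thorin}; both are fine). The real divergence is at $p=1$: the paper avoids the predual entirely and instead exploits solidity of $\A$. It passes from $A^*\in\A$ to the scalar matrix $M=[\Vert [A^*]_{l,k}\Vert]_{k,l\in X}\in\B(\ell^\infty_\omega(X))$ via Lemma~\ref{vectortoscalar}, and then bounds $\Vert Ag\Vert_{\ell^1_{1/\omega}(X;\Hil)}$ by a direct Schur-type computation terminating in $\Vert M(\omega(k)^{-1})_{k\in X}\Vert_{\ell^\infty_\omega(X)}\cdot\Vert g\Vert_{\ell^1_{1/\omega}(X;\Hil)}$. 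Your predual argument through $\ell^0_\omega(X;\Hil)$ is cleaner in spirit and does not use solidity, but the step ``$A^*$ maps $\ell^{00}(X;\Hil)$ into $\ell^0_\omega(X;\Hil)$'' deserves one more sentence of justification: since $A^*\in\A$ is also bounded on $\ell^1_\omega(X;\Hil)$ and $\ell^{00}\subseteq\ell^1_\omega$, one has $A^*(\ell^{00})\subseteq\ell^1_\omega\subseteq\ell^0_\omega$ (the last inclusion because absolute summability of $(\Vert f_k\Vert\omega(k))_k$ forces $\Vert f_k\Vert\omega(k)\to 0$). With that filled in, your approach buys a more uniform duality-based argument, while the paper's approach is more self-contained and makes explicit use of the solidity axiom.
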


\begin{proof}
Choose some arbitrary $A = [A_{k,l}]_{k,l\in X} \in \A$. Then $A^* = [A^*_{l,k}]_{k,l\in X} \in \A$ and in particular $A^* \in \B(\ell^p_{\omega}(X;\Hil))$ for $p\in [1,\infty)$ by assumption. Thus, by duality, the Banach space adjoint $(A^*)'$ of $A^*$, which, by (\ref{BSadjoint}), is given by $(A^*)' = (A^*)^* = A$, is contained in $\B(\ell^p_{1/\omega}(X;\Hil))$ for all $1<p\leq \infty$. 

It remains to show that $A\in \B(\ell^1_{1/\omega}(X;\Hil))$. Since $A^* = [[A^*]_{l,k}]_{k,l\in X} \in \A$, we have by solidity of $\A$ that the matrix $[\Vert [A^*]_{l,k}\Vert\cdot \mathcal{I}_{\B(\Hil)}]_{k,l\in X}$ is contained in $\A$ and thus $[\Vert [A^*]_{l,k}\Vert\cdot \mathcal{I}_{\B(\Hil)}]_{k,l\in X} \in \B(\ell_{\omega}^{\infty}(X;\Hil))$ by assumption. Lemma \ref{vectortoscalar} implies that the scalar matrix $M:=[\Vert [A^*]_{l,k} \Vert]_{k,l\in X}$ is contained in $\B(\ell_{\omega}^{\infty}(X))$. For $g=(g_l)_{l\in X} \in \ell^1_{1/\omega}(X;\Hil)$ this yields
\begin{flalign}
\Vert Ag \Vert_{\ell^1_{1/\omega}(X;\Hil)} 
&\leq \sum_{k\in X}\sum_{l\in X} \Vert A_{k,l} \Vert \Vert g_l \Vert \frac{1}{\omega(k)} \notag \\
&= \sum_{l\in X}\sum_{k\in X} \Vert A_{k,l} \Vert \Vert g_l \Vert \frac{1}{\omega(k)} \notag \\
&= \sum_{l\in X} \Vert g_l \Vert \frac{1}{\omega(l)} \sum_{k\in X} \Vert A_{k,l} \Vert  \frac{\omega(l)}{\omega(k)} \notag \\
&\leq \left( \sum_{l\in X} \Vert g_l \Vert \frac{1}{\omega(l)} \right) \sup_{m\in X}\sum_{k\in X} \Vert A_{k,m} \Vert \frac{\omega(m)}{\omega(k)}  \notag \\
&= \Vert g \Vert_{\ell^1_{1/\omega}(X;\Hil)} \sup_{m\in X}\left\vert \sum_{k\in X} \Vert [A^*]_{m,k} \Vert \frac{1}{\omega(k)} \right\vert \omega(m)  \notag \\
&= \Vert g \Vert_{\ell^1_{1/\omega}(X;\Hil)} \Vert M (w(k)^{-1})_{k\in X} \Vert_{\ell_{\omega}^{\infty}(X)} \notag \\
&\leq \Vert M \Vert_{\B(\ell_{\omega}^{\infty}(X))} \Vert g \Vert_{\ell^1_{1/\omega}(X;\Hil)} ,\notag
\end{flalign}
where we used that $\Vert (w(k)^{-1})_{k\in X} \Vert_{\ell_{\omega}^{\infty}(X)} = 1$ in the last step. Note that the interchange of order of summation in the second step is justified by the monotone convergence theorem, since only non-negative terms occur.
\end{proof}

\begin{corollary}\label{1admissible}
Let $\A$ be a spectral algebra. Whenever there exists some $(\A,p_0)$-admissible weight $\omega$ for some $0<p_0\leq 1$, the trivial weight $(1)_{l\in X}$ is $(\A,1)$-admissible.    
\end{corollary}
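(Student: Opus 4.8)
The plan is to fix an arbitrary $A = [A_{k,l}]_{k,l\in X} \in \A$ and an arbitrary exponent $p$, and to show $A\in \B(\ell^p(X;\Hil))$ for all exponents required by $(\A,1)$-admissibility of the trivial weight; since $p_0\leq 1$, the range $(p_0,\infty]\cup\{1\}$ (with $p_0 = 1$ read as the admissibility exponent set for the trivial weight) equals $[1,\infty]$, so it suffices to treat $p\in[1,\infty]$. The argument mirrors the proof of Lemma \ref{1/wadmissible}: pass to the scalar matrix of operator norms, interpolate there between the weights $\omega$ and $1/\omega$, and lift the result back to $\Hil$-valued sequences.

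First I would reduce to scalars. By solidity of $\A$ the matrix $[\Vert A_{k,l}\Vert \cdot \mathcal{I}_{\B(\Hil)}]_{k,l\in X}$ again belongs to $\A$. Since $p\in (p_0,\infty]\cup\{1\}$, the $(\A,p_0)$-admissibility of $\omega$ gives $[\Vert A_{k,l}\Vert \cdot \mathcal{I}_{\B(\Hil)}]_{k,l\in X}\in \B(\ell^p_{\omega}(X;\Hil))$; by Lemma \ref{1/wadmissible} the weight $1/\omega$ is $(\A,1)$-admissible, so (using $p\in (1,\infty]\cup\{1\}$) also $[\Vert A_{k,l}\Vert \cdot \mathcal{I}_{\B(\Hil)}]_{k,l\in X}\in \B(\ell^p_{1/\omega}(X;\Hil))$. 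Applying Lemma \ref{vectortoscalar} to each of these two statements, the scalar matrix $M := [\Vert A_{k,l}\Vert]_{k,l\in X}$ satisfies $M\in \B(\ell^p_{\omega}(X))$ and $M\in \B(\ell^p_{1/\omega}(X))$.

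Then I would interpolate and lift back: the Stein-Weiss interpolation theorem (Proposition \ref{SteinWeiss}, applied with the weight $\nu = \omega$ and exponent $p$) yields $M\in\B(\ell^p(X))$, and for $g = (g_l)_{l\in X}\in \ell^p(X;\Hil)$ the pointwise bound $\Vert (Ag)_k\Vert \leq \sum_{l\in X}\Vert A_{k,l}\Vert\,\Vert g_l\Vert = \big(M\,(\Vert g_l\Vert)_{l\in X}\big)_k$, combined with the monotonicity of the $\ell^p$-(quasi-)norm, gives $\Vert Ag\Vert_{\ell^p(X;\Hil)}\leq \Vert M\Vert_{\B(\ell^p(X))}\,\Vert g\Vert_{\ell^p(X;\Hil)}$, so $A\in\B(\ell^p(X;\Hil))$. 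As $A\in\A$ and $p\in[1,\infty]$ were arbitrary, the trivial weight is $(\A,1)$-admissible. (Alternatively, one may obtain the endpoints $p=1$ and $p=\infty$ in this way and then fill in $1<p<\infty$ via Riesz-Thorin interpolation, Proposition \ref{Riesz-Thorin}.) I do not expect a genuine obstacle; the only points needing care are the bookkeeping of which exponent ranges the two admissibility hypotheses cover — which works out precisely because $p_0\leq 1$ — and the absolute convergence of the double sum $\sum_k\sum_l \Vert A_{k,l}\Vert\,\Vert g_l\Vert$, handled exactly as in the proof of Lemma \ref{1/wadmissible} by the monotone convergence theorem, all terms being non-negative.
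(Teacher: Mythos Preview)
Your proposal is correct and follows essentially the same route as the paper's proof: pass to the scalar matrix of entry-wise operator norms via solidity and Lemma \ref{vectortoscalar}, use $(\A,p_0)$-admissibility of $\omega$ together with Lemma \ref{1/wadmissible} to get boundedness on both $\ell^p_{\omega}(X)$ and $\ell^p_{1/\omega}(X)$, apply Stein-Weiss interpolation (Proposition \ref{SteinWeiss}) to obtain boundedness on $\ell^p(X)$, and then lift back to $\ell^p(X;\Hil)$ via the pointwise domination $\Vert (Ag)_k\Vert \leq (M(\Vert g_l\Vert)_l)_k$. The only cosmetic difference is that the paper writes out the final estimate for $p<\infty$ and remarks that $p=\infty$ is analogous, whereas you treat all $p\in[1,\infty]$ at once.
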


\begin{proof}
Let $A = [A_{k,l}]_{k,l\in X}\in \A$ be arbitrary. Then $[\Vert A_{k,l}\Vert\cdot \mathcal{I}_{\B(\Hil)}]_{k,l\in X}\in \A$ by solidity of $\A$. By assumption, Lemma \ref{1/wadmissible} and Lemma \ref{vectortoscalar} this implies that the scalar matrix $C:= [\Vert A_{k,l}\Vert]_{k,l\in X}$ defines an element in $\B(\ell^p_{\omega}(X))\cap \B(\ell^p_{1/\omega}(X))$ for each $1\leq p \leq \infty$. By Stein-Weiss interpolation (Proposition \ref{SteinWeiss}) this yields $C \in \B(\ell^p(X))$ for all $1\leq p\leq \infty$. Now, let $p\in [1,\infty)$ be arbitrary and take any $(f_l)_{l\in X} \in \ell^p(X;\Hil)$. Then $(\Vert f_l \Vert)_{l\in X}\in \ell^p(X)$ and thus
\begin{align}
    \sum_{k\in X} \left\Vert \sum_{l\in X} A_{k,l} f_l \right\Vert^p &\leq \sum_{k\in X} \left\vert \sum_{l\in X} \Vert A_{k,l}\Vert \Vert f_l \Vert \right\vert^p \notag \\
    &\leq \Vert C\Vert_{\B(\ell^p(X))}^p \Vert (f_l)_{l\in X} \Vert_{\ell^p(X;\Hil)}^p, \notag
\end{align}
which means that $A\in \B(\ell^p(X;\Hil))$. The modifications of the above for the case $p=\infty$ are obvious.
\end{proof}

\subsubsection{Definition of the spaces $\Hil^p_{\omega}(T^d,T)$ for $p<\infty$}

We are now able to define the spaces $\Hil^p_{\omega}(T^d,T)$. Let $\A$ be a spectral algebra and $\omega$ be an $(\A,p_0)$-admissible weight. Further, let $T=(T_k)_{k\in X}$ be a g-frame and $T^d = (T^d_k)_{k\in X}$ be any dual g-frame of $T$ such that $T^d$ and $T$ are mutually $\A$-localized (by Theorem \ref{duallocalized} we may choose the canonical dual g-frame of $T$ in case $T$ is intrinsically $\A$-localized). Consider the space 
$$\Hil^{00}(T) := D_T (\ell^{00}(X;\Hil)) = \left\lbrace \sum_{k\in K} T_k^* f_k:  K\subseteq X, \vert K \vert<\infty, f_k\in \Hil \, (\forall k\in K) \right\rbrace .$$
Since $D_T\in \B(\ell^2(X;\Hil), \Hil)$ and  $D_T(\ell^2(X;\Hil)) = \Hil$  for every g-frame $T$, $\Hil^{00}(T)$ is a dense subspace of $\Hil$, because $\ell^{00}(X;\Hil)$ is a dense subspace of $\ell^{2}(X;\Hil)$. For $p\in (p_0,\infty)\cup\lbrace 0,1 \rbrace$, we equip $\Hil^{00}(T)$ with the (quasi-)norm 
\begin{equation}\label{Hpwnorm}
    \Vert f \Vert_{\Hil^p_{\omega}(T^d,T)} := \Vert C_{T^d}f \Vert_{\ell_{\omega}^p(X;\Hil)} := \Vert (T^d_k f)_{k\in X} \Vert_{\ell_{\omega}^p(X;\Hil)}.
\end{equation}
Note that (\ref{Hpwnorm}) indeed defines a norm on $\Hil^{00}(T)$ for $1\leq p < \infty$ and $p=0$, and a quasi-norm on $\Hil^{00}(T)$ in case $p_0<p<1$. To see this, observe that for $f\in \Hil^{00}(T)$ we have $f=D_T(g_k)_{k\in X}$ for some $(g_k)_{k\in X} \in \ell^{00}(X;\Hil)$ and thus 
\begin{flalign}\label{CboundedonHpw}
\Vert C_{T^d}f \Vert_{\ell_{\omega}^p(X;\Hil)} &= \Vert  C_{T^d}D_T(g_k)_{k\in X} \Vert_{\ell_{\omega}^p(X;\Hil)} \notag \\
&= \Vert  G_{T^d,T}(g_k)_{k\in X} \Vert_{\ell_{\omega}^p(X;\Hil)} \notag \\
&\leq \Vert G_{T^d,T} \Vert_{\B(\ell_{\omega}^p(X;\Hil))}  \Vert (g_k)_{k\in X} \Vert_{\ell_{\omega}^p(X;\Hil)} <\infty,
\end{flalign}
since $G_{T^d,T} \in \A$ and $\omega$ is $(\A,p_0)$-admissible by assumption, whereas the other properties of a (quasi-)norm follow from $C_{T^d}$ being linear and injective as an operator $\Hil \longrightarrow \ell^2(X;\Hil)$ and $\Vert \, . \, \Vert_{\ell_{\omega}^p(X;\Hil)}$ being a (quasi-)norm.

This justifies the following definition.

\begin{definition}\label{Hpwdef}
Let $\A$ be a spectral algebra, $\omega$ be an $(\A,p_0)$-admissible weight, $T=(T_k)_{k\in X}$ be a g-frame and $T^d = (T^d_k)_{k\in X}$ be a dual g-frame of $T$ such that $T^d\sim_{\A} T$. Then for each $p\in (p_0,\infty)\cup\lbrace 1 \rbrace$ we define $\Hil^p_{\omega}(T^d,T)$ as the completion of $\Hil^{00}(T)$ with respect to the (quasi-)norm $\Vert \, . \, \Vert_{\Hil^p_{\omega}(T^d,T)}$. Furthermore we analogously define $\Hil^0_{\omega}(T^d,T)$ as the completion of $\Hil^{00}(T)$ with respect to the norm $\Vert \, . \, \Vert_{\Hil^{0}_{\omega}(T^d,T)} := \Vert C_{T^d}\, . \, \Vert_{\ell^{0}_{\omega}(X;\Hil)}=\Vert C_{T^d}\, . \, \Vert_{\ell^{\infty}_{\omega}(X;\Hil)}$.
\end{definition}
By definition, $\Hil^p_{\omega}(T^d,T)$ is a Banach space for each $p\in [1, \infty)\cup \lbrace 0 \rbrace$, and a quasi-Banach space for $p_0<p<1$. Moreover, $C_{T^d}$ extends via density to an isometry $\Hil^p_{\omega}(T^d,T) \longrightarrow \ell_{\omega}^p(X;\Hil)$ for $p\in (p_0,\infty)\cup\lbrace 0,1 \rbrace$. 

\subsubsection{Definition of the space $\Hil^{\infty}_{\omega}(T^d,T)$}

The definition of the space $\Hil^{\infty}_{\omega}(T^d,T)$, which we will consider next, is more technical. In order to motivate it, let $T^d = (T^d_k)_{k\in X}$ be a g-frame for $\Hil$ and consider the dense subspace $\Hil^{00}(T^d) := D_{T^d}(\ell^{00}(X;\Hil))$ of $\Hil$. Then it is easily seen that $(\Hil,\Hil^{00}(T^d))$ forms a dual pair with respect to the bilinear map $\Hil \times \Hil^{00}(T^d) \longrightarrow \mathbb{C}, (h,g)\mapsto \langle h,g \rangle$. 
Consequently, the family $\lbrace p_{g} : g \in \Hil^{00}(T^d) \rbrace$ of seminorms $p_g$ on $\Hil$, where $p_g(h) = \vert \langle g,h\rangle \vert$, defines a Hausdorff locally convex topology $\sigma(\Hil,\Hil^{00}(T^d))$ on $\Hil$. The topological completion $\overline{(\Hil, \sigma(\Hil,\Hil^{00}(T^d)))}$ consists of all equivalence classes $[\lbrace f_{\alpha} \rbrace]$ of Cauchy nets $\lbrace f_{\alpha} \rbrace \subset \Hil$, where (by definition) two Cauchy nets $\lbrace f_{\alpha}\rbrace$ and $\lbrace g_{\alpha} \rbrace$ are equivalent if and only if 
\begin{flalign}\label{Cauchyneteqr2}
\forall\varepsilon >0, \forall g \in \Hil^{00}(T^d): \exists \beta: \forall \alpha \geq \beta: \left\vert \left\langle g , f_{\alpha} - g_{\alpha} \right\rangle \right\vert < \varepsilon.
\end{flalign}
Since any $g\in \Hil^{00}(T^d)$ has the form $g = \sum_{k\in K} (T^d_k)^* h_k$ with $h_k\in \Hil$ for each $k\in X$ and $K\subset X$, $\vert K \vert < \infty$, (\ref{Cauchyneteqr2}) is equivalent to 
\begin{equation}\label{Cauchyneteqr}
\forall \varepsilon >0, \forall k\in X, \forall h\in \Hil: \exists \beta: \forall \alpha \geq \beta: 
\vert \langle h, T^d_k (f_{\alpha} - g_{\alpha}) \rangle \vert < \varepsilon.
\end{equation} 
Next, we consider the subspace of all equivalence classes of Cauchy \emph{sequences} $\lbrace f_n \rbrace_{n=1}^{\infty} \subset \Hil$ and denote this space by $\Hil^{\infty}(T^d,T)$. 

\noindent \textit{\textbf{Claim.} If $\lbrace f_n \rbrace_{n=1}^{\infty}$ and $\lbrace g_n \rbrace_{n=1}^{\infty}$ are equivalent Cauchy sequences, then the sequence $\lbrace f_n-g_n \rbrace_{n=1}^{\infty}$ is norm-bounded in $\Hil$.}

\begin{proof}
Recall, that $(\Hil^{00}(T^d), \Vert \cdot \Vert_{\Hil})$ is a dense subspace of $(\Hil, \Vert \cdot \Vert_{\Hil})$. By the Riesz representation theorem we may identify each $f_n \in \Hil$ with a linear functional $\ell_{f_n} \in \Hil^* \subseteq (\Hil^{00}(T^d))^*$. Hence, by (\ref{Cauchyneteqr2}), two such Cauchy sequences $\lbrace f_n \rbrace_{n=1}^{\infty}$ and $\lbrace g_n \rbrace_{n=1}^{\infty}$ being equivalent implies that the sequence of functionals $\lbrace \ell_{(f_n-g_n)} \rbrace_{n=1}^{\infty} \subset \Hil^* \subseteq (\Hil^{00}(T^d))^*$ is a weak* convergent sequence in $(\Hil^{00}(T^d))^*$ (with weak*-limit $0$). Therefore, by a general principle \cite{conw1}, $\lbrace \ell_{(f_n-g_n)} \rbrace_{n=1}^{\infty}$ is norm-bounded in $(\Hil^{00}(T^d))^*$ by some positive constant $c'$. We will show that $\lbrace \ell_{(f_n-g_n)} \rbrace_{n=1}^{\infty}$ is also norm-bounded in $\Hil^*$, which proves the claim. To this end, observe that $(\Hil^{00}(T^d), \Vert \cdot \Vert_{\Hil})$ being a dense subspace of $(\Hil, \Vert \cdot \Vert_{\Hil})$ implies via the reverse triangle inequality that the unit sphere of $\Hil^{00}(T^d)$ is a dense subset of the unit sphere of $\Hil$ (with respect to $\Vert \cdot \Vert_{\Hil}$). Thus, for every unit vector $h\in \Hil$ and every $n\in \mathbb{N}$, there exists a unit vector $h_n \in \Hil^{00}(T^d)$ such that $\Vert h - h_n\Vert < \Vert \ell_{(f_n-g_n)} \Vert_{\Hil^*}^{-1}$ (we may assume W.L.O.G. that each $\ell_{(f_n-g_n)}$ is non-zero). This yields for any unit-norm $h\in \Hil$ that 
$$\vert \ell_{(f_n-g_n)}(h) \vert \leq \vert \ell_{(f_n-g_n)}(h-h_n) \vert + \vert \ell_{(f_n-g_n)}(h_n) \vert \leq 1 + c' =: c$$
and the claim follows.
\end{proof}

So, now we look at the subspace $\Hil^{\infty}(T^d,T)$ of all equivalence classes of Cauchy \emph{sequences} $\lbrace f_n \rbrace_{n=1}^{\infty} \subset \Hil$ of the Hausdorff locally convex space $\overline{(\Hil, \sigma(\Hil,\Hil^{00}(T^d)))}$. Let $B$ denote the upper g-frame bound of $T^d$. By the proven claim and (\ref{Cauchyneteqr}), two such Cauchy sequences being equivalent implies 
$$\vert \langle h, T^d_k (f_n - g_n) \rangle \vert \leq \Vert h \Vert_{\Hil} \Vert T^d_k \Vert_{\B(\Hil)} \Vert f_n - g_n \Vert_{\Hil} \leq \sqrt{B} c \Vert h \Vert_{\Hil} \qquad (\forall k\in X, \forall n\in \mathbb{N}),$$
where $c$ denotes the norm-bound of the sequence $\lbrace \Vert f_n - g_n\Vert_{\Hil} \rbrace_{n=1}^{\infty}$. Thus
\begin{equation}\label{supconditionER}
    \sup_{n\in \mathbb{N}}\Vert C_{T^d}(f_n - g_n) \Vert_{\ell^{\infty}(X;\Hil)} < \infty .
\end{equation}
In the weighted case we 
consider a modified version of (\ref{supconditionER}) 
and arrive at the following (compare with \cite[Definition 3]{xxlgro14}). 

\begin{definition}{\label{ERHw}}
Let $\lbrace f_n \rbrace_{n=1}^{\infty}$ and $\lbrace g_n \rbrace_{n=1}^{\infty}$ be sequences in $\Hil$, $T^d=(T^d_k)_{k\in X}$ be a g-frame for $\Hil$ and $\omega$ be a weight. Then we say that $\lbrace f_n \rbrace_{n=1}^{\infty}$ and $\lbrace g_n \rbrace_{n=1}^{\infty}$ are \emph{$(T^d,\omega)$-equivalent}, in short $\lbrace f_n \rbrace_{n=1}^{\infty} \sim_{T^d,\omega} \lbrace g_n \rbrace_{n=1}^{\infty}$, if  
$$\forall k\in X: \lim_{n\rightarrow \infty} \Vert T^d_k(f_n-g_n)\Vert = 0 \qquad \text{and} \qquad \sup_{n\in \mathbb{N}}\Vert C_{T^d}(f_n - g_n) \Vert_{\ell_{\omega}^{\infty}(X;\Hil)} < \infty .$$
\end{definition}

Clearly, the relation $\sim_{T^d,\omega}$ defines an equivalence relation on the set of all sequences in $\Hil$. We define the space $\Hil_{\omega}^{\infty}(T^d,T)$ as a certain subspace of the vector space of equivalence classes corresponding to $\sim_{T^d,\omega}$ (compare with \cite[Definition 3]{xxlgro14}).

\begin{definition}\label{Hwinftydef}
Let $\A$ be a spectral algebra, $\omega$ be an $(\A,p_0)$-admissible weight, $T=(T_k)_{k\in X}$ be a g-frame and $T^d = (T^d_k)_{k\in X}$ be a dual g-frame of $T$ such that $T^d\sim_{\A} T$. Then $\Hil^{\infty}_{\omega}(T^d,T)$ is defined to be the space of all equivalence classes $f=[\lbrace f_n \rbrace_{n=1}^{\infty}]_{\sim_{T^d,\omega}}$ of sequences $\lbrace f_n \rbrace_{n=1}^{\infty}$ in $\Hil$ satisfying 
\begin{itemize}
    \item[(1.)] $\lim_{n\rightarrow \infty} T^d_k f_n =: T^d_k f$ exists in $\Hil$ for each $k\in X$, 
    \item[(2.)] $\sup_{n\in \mathbb{N}} \Vert C_{T^d}f_n \Vert_{\ell_{\omega}^{\infty}(X;\Hil)} < \infty .$
\end{itemize}
\end{definition}
It is quickly verified that the defining properties (1.) and (2.) do not depend on the choice of the representative sequence $\lbrace f_n \rbrace_{n=1}^{\infty}$. Condition (2.) can be rewritten as
\begin{equation}\label{Cineq}
\Vert T^d_k f_n \Vert \omega(k) \leq C \qquad (\forall k\in X, \forall n\in \mathbb{N})    
\end{equation}
for some positive constant $C$. Thus $\Vert T^d_k f \Vert \omega(k) = \lim_{n\rightarrow \infty} \Vert T^d_k f_n \Vert \omega(k) \leq C$ for each $k\in X$. In particular 
\begin{equation}\label{Hpinftynorm}
\Vert f \Vert_{\Hil^{\infty}_{\omega}(T^d,T)} := \Vert C_{T^d} f \Vert_{\ell_{\omega}^{\infty}(X;\Hil)} = \sup_{k\in X} \left\Vert \lim_{n\rightarrow \infty} T^d_k f_n \right\Vert \omega(k)
\end{equation}
is well-defined. Note that (\ref{Hpinftynorm}) certainly defines a semi-norm on $\Hil^{\infty}_{\omega}(T^d,T)$, since limits are linear and $\Vert \, . \, \Vert_{\ell_{\omega}^{\infty}(X;\Hil)}$ is a norm. If $\Vert f \Vert_{\Hil^{\infty}_{\omega}(T^d,T)} = 0$ for some $f=[\lbrace f_n \rbrace_{n=1}^{\infty}]_{\sim_{T^d, \omega}}$, then $\lbrace f_n \rbrace_{n=1}^{\infty} \sim_{T^d,\omega} \lbrace 0 \rbrace_{n=1}^{\infty}$ and thus $f=0$ in $\Hil^{\infty}_{\omega}(T^d,T)$. This shows that $\Vert \, . \,  \Vert_{\Hil^{\infty}_{\omega}(T^d,T)}$ actually defines a norm on $\Hil^{\infty}_{\omega}(T^d,T)$.

We explicitly emphasize that here $T^d_k f \in \Hil$ is the limit of the sequence $\lbrace T^d_k f_n \rbrace_{n=1}^{\infty} \subset \Hil$ (for each $k\in X$). In particular, $C_{T^d} f = (T^d_k f)_{k\in X}$ is understood as a pointwise limit. In this sense, $C_{T^d}:\Hil^{\infty}_{\omega}(T^d,T)\longrightarrow \ell_{\omega}^{\infty}(X;\Hil)$ is a well-defined isometry. 

\subsubsection{Properties of the spaces $\Hil_{\omega}^p(T^d,T)$}

In the previous subsection, we have shown that $C_{T^d}:\Hil^{\infty}_{\omega}(T^d,T)\longrightarrow \ell_{\omega}^{\infty}(X;\Hil)$ is a well-defined isometry. Below we show that its range is closed.  


\begin{lemma}\label{hard}
Under the same assumptions as in Definition \ref{Hwinftydef}, the map $f=[\lbrace f_n \rbrace_{n=1}^{\infty}]_{\sim_{T^d,\omega}} \mapsto (\lim_{n\rightarrow \infty} T^d_k f_n )_{k\in X} =: C_{T^d}f$ is an isometric isomorphism from $\Hil_{\omega}^{\infty}(T^d,T)$ onto the space $V = \lbrace g\in \ell_{\omega}^{\infty}(X;\Hil): g = G_{T^d,T} g \rbrace$. Furthermore, $V$ is a closed subspace of $\ell_{\omega}^{\infty}(X;\Hil)$, and thus $\Hil_{\omega}^{\infty}(T^d,T)$ a Banach space.
\end{lemma}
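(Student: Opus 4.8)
The plan is to establish the lemma in three stages: first show the map lands in $V$ and is surjective onto $V$, then show it is an isometry, and finally show $V$ is closed in $\ell_\omega^\infty(X;\Hil)$. The first two parts are essentially bookkeeping given the preceding discussion; the genuine work is the closedness of $V$.

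\emph{Range is contained in $V$ and surjectivity.} Let $f = [\{f_n\}]_{\sim_{T^d,\omega}} \in \Hil^\infty_\omega(T^d,T)$ and put $g := C_{T^d}f = (\lim_n T^d_k f_n)_{k\in X}$; by the remarks after Definition \ref{Hwinftydef} this lies in $\ell^\infty_\omega(X;\Hil)$. To see $g = G_{T^d,T}g$, I would use that each $f_n$ can be taken in $\Hil$ (indeed, a diagonal/truncation argument lets us assume $f_n \in \Hil^{00}(T)$, i.e. $f_n = D_T \xi_n$ for some $\xi_n \in \ell^{00}(X;\Hil)$, so that $C_{T^d}f_n = G_{T^d,T}\xi_n$ and the reconstruction identity $G_{T^d,T} = G_{T^d,T}G_{T^d,T}$ following from $\mathcal{I}_{\B(\Hil)} = D_T C_{T^d}$ gives $C_{T^d}f_n = G_{T^d,T}(C_{T^d}f_n)$); then pass to the limit using that $G_{T^d,T}\in\A$ acts boundedly on $\ell^\infty_\omega(X;\Hil)$ (admissibility) together with the pointwise convergence $T^d_k f_n \to T^d_k f$. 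Since $G_{T^d,T}$ has matrix $[T^d_k (T_l)^*]$, each coordinate of $G_{T^d,T}g$ is a fixed operator-norm-convergent series in the $g_l$'s, and boundedness on $\ell^\infty_\omega$ plus coordinatewise convergence yields $g = G_{T^d,T}g$. Conversely, given $g\in V$, the constant sequence $f_n := D_T g$ (note $g\in\ell^\infty_\omega\subseteq$ is not automatically in $\ell^2$, so more care is needed — one truncates $g$ to $g^{(n)}\in\ell^{00}(X;\Hil)$ with $g^{(n)}\to g$ coordinatewise and $\sup_n\|g^{(n)}\|_{\ell^\infty_\omega}<\infty$, sets $f_n := D_T g^{(n)}\in\Hil^{00}(T)$, and checks $T^d_k f_n = (G_{T^d,T}g^{(n)})_k \to (G_{T^d,T}g)_k = g_k$); this produces a representative sequence whose image under $C_{T^d}$ is $g$, so the map is onto $V$.

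\emph{Isometry.} This is immediate: by \eqref{Hpinftynorm}, $\Vert f \Vert_{\Hil^\infty_\omega(T^d,T)} = \Vert C_{T^d}f\Vert_{\ell^\infty_\omega(X;\Hil)}$, and injectivity was already noted after Definition \ref{Hwinftydef} (if $C_{T^d}f = 0$ then $\{f_n\}\sim_{T^d,\omega}\{0\}$, so $f = 0$). Hence $C_{T^d}$ is an isometric isomorphism onto its range $V$.

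\emph{$V$ is closed, and conclusion.} The key point: $V = \range(G_{T^d,T}|_{\ell^\infty_\omega}) \cap \{g : g = G_{T^d,T}g\}$ is exactly the fixed-point set of the bounded operator $P := G_{T^d,T}$ acting on $\ell^\infty_\omega(X;\Hil)$ — indeed $P$ is idempotent there since $G_{T^d,T}G_{T^d,T} = G_{T^d,T}$ by \eqref{dualgframeoperatornotation} and this identity persists on $\ell^\infty_\omega$ because $G_{T^d,T}\in\A$ is bounded on that space (one verifies $PP = P$ as matrix multiplication, the entrywise series converging in operator norm as in Remark \ref{matrixDC}). A bounded idempotent on any Banach space has closed range, equal to $\ker(\mathcal{I}-P)$; thus $V = \range(P)$ is closed in $\ell^\infty_\omega(X;\Hil)$. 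Since $\ell^\infty_\omega(X;\Hil)$ is a Banach space, $V$ is a Banach space, and therefore so is $\Hil^\infty_\omega(T^d,T)$ via the isometric isomorphism $C_{T^d}$. I expect the main obstacle to be the surjectivity/representative-construction step: one must produce, for an arbitrary fixed point $g\in\ell^\infty_\omega$ (which need not lie in $\ell^2(X;\Hil)$, so $D_T g$ is a priori undefined), an honest $\sim_{T^d,\omega}$-Cauchy sequence in $\Hil$ realizing $g$, and verify both the pointwise-limit condition (1.) and the uniform bound (2.) of Definition \ref{Hwinftydef} — this is where solidity of $\A$ and the $(\A,p_0)$-admissibility of $\omega$ (equivalently, $(\A,1)$-admissibility, via Corollary \ref{1admissible}) do the real work in controlling the truncation errors.
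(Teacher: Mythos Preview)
Your proof is essentially correct, and for surjectivity it matches the paper's approach exactly (truncate $g\in V$ to finitely supported $g^{(n)}$, set $f_n=D_T g^{(n)}\in\Hil$, and check conditions (1.)\ and (2.)\ of Definition~\ref{Hwinftydef}). Two remarks are in order.

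For the ``range in $V$'' step, the detour through $\Hil^{00}(T)$ is unnecessary and its justification is delicate: approximating $f_n$ in the $\Hil$-norm by elements of $\Hil^{00}(T)$ need not preserve the uniform bound $\sup_n\Vert C_{T^d}(\cdot)\Vert_{\ell^\infty_\omega}<\infty$ when $\omega$ is unbounded. The paper avoids this entirely by noting that $G_{T^d,T}C_{T^d}f_n=C_{T^d}f_n$ already holds for every $f_n\in\Hil$ (directly from $D_T C_{T^d}=\mathcal{I}_\Hil$), and then passes to the limit componentwise. That limit passage is the real content: your phrase ``boundedness on $\ell^\infty_\omega$ plus coordinatewise convergence'' is not quite enough, since bounded operators on $\ell^\infty_\omega$ are not continuous for pointwise convergence. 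The paper invokes dominated convergence for Bochner sums, the dominating condition being that each row $(\Vert[G_{T^d,T}]_{k,l}\Vert\,\omega(l)^{-1})_l$ is summable; this follows from solidity of $\A$ and $(\A,p_0)$-admissibility of $\omega$ via Lemma~\ref{vectortoscalar}.

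For the closedness of $V$, your argument is genuinely different and cleaner. The paper uses a direct sequential argument (if $g_n\to g$ in $\ell^\infty_\omega$ and $g_n=G_{T^d,T}g_n$, then both $g$ and $G_{T^d,T}g$ are limits of $G_{T^d,T}g_n$, hence equal). You instead observe that $P=G_{T^d,T}$ is a bounded idempotent on $\ell^\infty_\omega(X;\Hil)$: the identity $P^2=P$ holds in $\B(\ell^2(X;\Hil))$ by the dual-frame relation, hence in $\A$ by faithfulness of $\A\hookrightarrow\B(\ell^2(X;\Hil))$, and then transfers to $\B(\ell^\infty_\omega(X;\Hil))$ by admissibility. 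Thus $V=\ker(\mathcal{I}-P)$ is closed in one stroke. The paper uses exactly this transfer principle later (Step~1 of Theorem~\ref{equalityofspaces}), so your argument is entirely consistent with the framework; it simply front-loads that observation.
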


\begin{proof}
Let $f=[\lbrace f_n \rbrace_{n=1}^{\infty}]_{\sim_{T^d,\omega}} \in \Hil_{\omega}^{\infty}(T^d,T)$ be fixed. Then, by definition, we have that $\lim_{n\rightarrow \infty} T^d_k f_n =: T^d_k f$ exists in $\Hil$ for each $k\in X$ and that $C_{T^d}f = (T^d_k f)_{k\in X} \in \ell_{\omega}^{\infty}(X;\Hil)$. Since $\omega$ is $(\A,p_0)$-admissible and $T^d\sim_{\A}T$, we have $G_{T^d,T}\in \A \subset \B(\ell^{\infty}_{\omega}(X;\Hil))$, hence $G_{T^d,T} C_{T^d}f \in \ell^{\infty}_{\omega}(X;\Hil)$. We will show that $C_{T^d}f = G_{T^d,T} C_{T^d}f$ as elements in $\ell^{\infty}_{\omega}(X;\Hil)$. It suffices to verify the claimed identity component-wise, i.e. to show that 
$$\left\Vert T^d_k f - \sum_{l\in X} [G_{T^d,T}]_{k,l} T^d_l f \right\Vert = 0 \qquad (\text{for each }k\in X).$$
Since $T^d$ and $T$ is a pair of dual g-frames we have $G_{T^d,T} C_{T^d} f_n = C_{T^d} f_n$ for each $n\in \mathbb{N}$. Therefore, it suffices to show that $$\lim_{n\rightarrow \infty} \left\Vert \sum_{l\in X} [G_{T^d,T}]_{k,l} T^d_l f_n - \sum_{l\in X} [G_{T^d,T}]_{k,l} T^d_l f \right\Vert = 0 \qquad (\text{for each }k\in X).$$ 
The latter follows immediately from an application of the dominated convergence theorem for Bochner spaces \cite[Proposition 1.2.5]{HyNeVeWe16}, once we have verified that for each $k\in X$, there exists some family of non-negative numbers $(c^k_l)_{l\in X}$ such that
$$ \Vert [G_{T^d,T}]_{k,l} T^d_l f_n \Vert \leq  c_l^k  \quad (\forall n\in \mathbb{N}), \qquad \text{ and } \sum_{l\in X}  c_l^k  < \infty .$$
In order to show the latter dominance condition, we fix $k\in X$ and estimate via (\ref{Cineq})  
\begin{flalign}
\sum_{l\in X} \Vert [G_{T^d,T}]_{k,l} T^d_l f_n \Vert &\leq \sum_{l\in X} C \Vert [G_{T^d,T}]_{k,l}\Vert \cdot w(l)^{-1}\notag \\
&\leq Cw(k)^{-1} \sup_{m\in X}  \left\vert \sum_{l\in X} \Vert [G_{T^d,T}]_{m,l}\Vert \cdot w(l)^{-1} \right\vert w(m) =: (\ast). \notag     
\end{flalign}
Since $G_{T^d,T}= [T^d_k T_l^*]_{k,l\in X}$ is contained in $\A$, so is $[\Vert T^d_k T_l^* \Vert\cdot \mathcal{I}_{\B(\Hil)}]_{k,l\in X}$ due to solidity of $\A$. Thus, by Lemma \ref{vectortoscalar}, the scalar matrix $G:=[\Vert T^d_k T_l^* \Vert]_{k,l\in X}$ is contained in $\B(\ell_{\omega}^{\infty}(X))$ as well. Finally, since $\Vert (\omega(l)^{-1})_{l\in X}\Vert_{\ell_{\omega}^{\infty}(X)} = 1$, we arrive at
$$(\ast) = C \omega(k)^{-1} \Vert G\cdot(\omega(l)^{-1})_{l\in X} \Vert_{\ell_{\omega}^{\infty}(X)} \leq C\omega(k)^{-1} \Vert G \Vert_{\B(\ell_{\omega}^{\infty}(X))} < \infty$$
and the claim is proven. In particular, we have shown that $C_{T^d}$ is an isometry from $\Hil_{\omega}^{\infty}(T^d,T)$ into $V\subseteq \ell_{\omega}^{\infty}(X;\Hil)$.

To show that $C_{T^d}: \Hil_{\omega}^{\infty}(T^d,T) \longrightarrow V$ is onto, choose some arbitrary $g =( g_l)_{l\in X} \in V$. Let $F_1 \subseteq F_2\subseteq \dots$ be a nested sequence of finite subsets of $X$ such that $\bigcup_{n=1}^{\infty} F_n = X$ and set $f_n = \sum_{l\in F_n} T_l^* g_l \in \Hil$. Then
$$\lim_{n\rightarrow \infty} T^d_k f_n = \lim_{n\rightarrow \infty} \sum_{l\in F_n}T^d_k T_l^* g_l = \sum_{l\in X} [G_{T^d,T}]_{k,l} g_l = g_k \in \Hil$$
for each $k\in X$. In particular, $\sup_{k\in X} \Vert \lim_{n\rightarrow \infty} T^d_k f_n \Vert \omega(k) = \Vert g\Vert_{\ell_{\omega}^{\infty}(X;\Hil)} < \infty$. Consequently, $f:= [\lbrace f_n \rbrace_{n=1}^{\infty}]_{\sim_{T^d,\omega}} \in \Hil_{\omega}^{\infty}(T^d,T)$ and $C_{T^d}f = g$. 

Finally, let $\lbrace g_n \rbrace_{n=1}^{\infty}$ be a sequence in $V$, which converges in $\ell_{\omega}^{\infty}(X;\Hil)$ to some $g\in \ell_{\omega}^{\infty}(X;\Hil)$. Then
$$\lim_{n\rightarrow \infty} \Vert g - G_{T^d,T} g_n \Vert_{\ell_{\omega}^{\infty}(X;\Hil)} = \lim_{n\rightarrow \infty} \Vert g - g_n \Vert_{\ell_{\omega}^{\infty}(X;\Hil)} = 0$$
and at the same time 
$$\lim_{n\rightarrow \infty} \Vert G_{T^d,T} g - G_{T^d,T} g_n \Vert_{\ell_{\omega}^{\infty}(X;\Hil)} \leq \Vert G_{T^d,T} \Vert_{\B(\ell_{\omega}^{\infty}(X;\Hil))} \lim_{n\rightarrow \infty} \Vert g - g_n \Vert_{\ell_{\omega}^{\infty}(X;\Hil)} = 0.$$
Since limits in a Banach space are unique we must have $g = G_{T^d,T} g \in V$, hence $V$ is a closed subspace of $\ell_{\omega}^{\infty}(X;\Hil)$.
\end{proof}

So far we have shown the following. 

\begin{corollary}\label{isometrycor}
The space $\Hil_{\omega}^{p}(T^d,T)$ is a Banach space for each $p\in [1,\infty] \cup \lbrace 0 \rbrace$ and a quasi-Banach space for $p_0<p<1$. Moreover, $C_{T^d}$ is an isometry $\Hil_{\omega}^{p}(T^d,T) \longrightarrow \ell^p_{\omega}(X;\Hil)$ for all $p\in (p_0,\infty]\cup\lbrace 0,1 \rbrace$. 
\end{corollary}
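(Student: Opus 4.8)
The plan is to recognize that this corollary merely collects the results of the three preceding subsubsections, so the argument splits into the case $p\neq\infty$, handled by abstract completion arguments, and the case $p=\infty$, handled by Lemma \ref{hard}.

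First I would treat $p\in(p_0,\infty)\cup\lbrace 0,1\rbrace$. By Definition \ref{Hpwdef}, $\Hil^p_\omega(T^d,T)$ is by construction the completion of $(\Hil^{00}(T),\Vert\,\cdot\,\Vert_{\Hil^p_\omega(T^d,T)})$, and --- as already noted after (\ref{CboundedonHpw}) --- $\Vert\,\cdot\,\Vert_{\Hil^p_\omega(T^d,T)}$ is a norm for $p\in[1,\infty)\cup\lbrace 0\rbrace$ and a quasi-norm (inheriting its quasi-triangle constant from $\Vert\,\cdot\,\Vert_{\ell^p_\omega(X;\Hil)}$ via linearity of $C_{T^d}$) for $p_0<p<1$. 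Since the completion of a normed space is a Banach space and the completion of a quasi-normed space is a quasi-Banach space, the first assertion follows for all such $p$. For the isometry, the defining identity (\ref{Hpwnorm}) says precisely that $C_{T^d}$ restricted to the dense subspace $\Hil^{00}(T)$ is norm-preserving into $\ell^p_\omega(X;\Hil)$, which is itself complete (a Banach space for $p\in[1,\infty)\cup\lbrace 0\rbrace$, a quasi-Banach space for $0<p<1$); so I would simply invoke the universal property of completion to extend $C_{T^d}$ uniquely to a norm-preserving --- hence injective --- linear map on $\Hil^p_\omega(T^d,T)$, which is the claimed isometry.

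For $p=\infty$, both claims are already contained in Lemma \ref{hard}, which exhibits an isometric isomorphism of $\Hil^\infty_\omega(T^d,T)$ onto the closed subspace $V\subseteq\ell^\infty_\omega(X;\Hil)$; being isometrically isomorphic to a closed subspace of a Banach space, $\Hil^\infty_\omega(T^d,T)$ is a Banach space, and composing with the inclusion $V\hookrightarrow\ell^\infty_\omega(X;\Hil)$ shows $C_{T^d}$ is an isometry into $\ell^\infty_\omega(X;\Hil)$. I expect no genuine obstacle in this corollary: the only substantive step --- proving that the range of $C_{T^d}$ in the $p=\infty$ case is exactly the closed subspace $V$, using solidity of $\A$, $(\A,p_0)$-admissibility of $\omega$, Lemma \ref{vectortoscalar}, and the dominated convergence theorem for Bochner spaces --- has already been dispatched in Lemma \ref{hard}; what remains here are only the routine facts that completions inherit (quasi-)Banach structure and that norm-preserving maps on dense subspaces extend to the completion.
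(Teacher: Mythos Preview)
Your proposal is correct and matches the paper's approach exactly: the corollary is stated in the paper without proof, prefaced only by ``So far we have shown the following,'' since the $p<\infty$ cases follow directly from the completion definition (as noted right after Definition~\ref{Hpwdef}) and the $p=\infty$ case is precisely the content of Lemma~\ref{hard}.
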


\begin{remark}
Ignoring the weights for the moment, we explicitly emphasize the difference between the spaces $\Hil^{0}(T^d,T)$ and $\Hil^{\infty}(T^d,T)$. While $\Hil^{0}(T^d,T)$ is the completion of $\Hil^{00}(T)$ with respect to the norm $\Vert C_{T^d}\cdot \Vert_{\ell^{\infty}(X;\Hil)}$, the space $\Hil^{\infty}(T^d,T)$ is a complete normed subspace of the topological completion of $\Hil$ equipped with the $\sigma(\Hil,\Hil^{00}(T^d))$-topology. In particular, while $\Hil^{0}(T^d,T)$ consists of equivalence classes of sequences in $\Hil^{00}(T)$, with equivalence relation given by 
$$\lbrace f_n \rbrace_{n=1}^{\infty} \sim \lbrace g_n \rbrace_{n=1}^{\infty} \Leftrightarrow \lim_{n\rightarrow \infty} \sup_{k\in X} \Vert T^d_k (f_n-g_n)\Vert = 0,$$
the space $\Hil^{\infty}(T^d,T)$ consists of equivalence classes of sequences in $\Hil$, with equivalence relation given by 
\begin{flalign}
\lbrace f_n \rbrace_{n=1}^{\infty} \sim_{T^d} \lbrace g_n \rbrace_{n=1}^{\infty} \Leftrightarrow &\lim_{n\rightarrow \infty} \Vert T^d_k (f_n-g_n)\Vert = 0 \, \, (\forall k\in X), \notag \\
&\text{and } \,  \sup_{n\in \mathbb{N}} \sup_{k\in X} \Vert T^d_k (f_n - g_n)\Vert < \infty .\notag    
\end{flalign}
We will prove in Theorem \ref{dualspaces} that $\Hil^{\infty}(T^d,T)$ is isomorphic to the bi-dual $(\Hil^{0}(T^d,T))^{**}$ of the non-reflexive space $\Hil^{0}(T^d,T)$. In this sense $\Hil^{0}(T^d,T)$ can be identified with a closed linear subspace of $\Hil^{\infty}(T^d,T)$ \cite{conw1}.
\end{remark}

Next, we consider the properties of the synthesis operator in this setting. In order to keep the notation as simple as possible, we use the same symbol for each of the operators 
$$D_T:\ell_{\omega}^{p}(X;\Hil)\longrightarrow \Hil^{p}_{\omega}(T^d,T) \qquad (p\in (p_0,\infty)\cup \lbrace 0,1 \rbrace), $$
densely defined on $\ell^{00}(X;\Hil)$ via 
$$D_T(g_l)_{l\in X} = \sum_{l\in X} T_l^* g_l ,$$
and 
$$D_T:\ell_{\omega}^{\infty}(X;\Hil)\longrightarrow \Hil^{\infty}_{\omega}(T^d,T),$$
defined by the equivalence class 
$$D_T(g_l)_{l\in X} = \sum_{l\in X}T_l^* g_l := [\lbrace f_n \rbrace_{n=1}^{\infty}]_{\sim_{T^d,\omega}}$$ 
of the partial sums 
$$f_n = \sum_{l\in F_n}T_l^*g_l ,$$
where $F_1 \subseteq F_2\subseteq \dots$ is a nested sequence of finite subsets of $X$ such that $\bigcup_{n=1}^{\infty} F_n = X$. In fact, the latter definition is independent of the choice of $\lbrace F_n \rbrace_{n=1}^{\infty}$, as the next result shows. 


\begin{proposition}\label{Donto}
Let $\A$ be a spectral algebra, $\omega$ be an $(\A,p_0)$-admissible weight, $T=(T_k)_{k\in X}$ be a g-frame and $T^d = (T^d_k)_{k\in X}$ be a dual g-frame of $T$ such that $T^d\sim_{\A} T$. Then, for each $p\in (p_0,\infty]\cup \lbrace 0,1\rbrace$, the synthesis operator 
\begin{flalign}\label{syntheisseries}
D_T: \ell_{\omega}^{p}(X;\Hil) \longrightarrow  \Hil_{\omega}^{p}(T^d,T), \notag \\
(g_l)_{l\in X} \mapsto \sum_{l\in X} T_l^* g_l 
\end{flalign}
is bounded and surjective. For $p<\infty$ the series (\ref{syntheisseries}) converges unconditionally in $\Hil^p_{\omega}(T^d,T)$. In the case $p=\infty$, $D_Tg$ is independent of the choice of $\lbrace F_n \rbrace_{n=1}^{\infty}$ and the series (\ref{syntheisseries}) converges unconditionally in the $\sigma(\Hil,\Hil^{00}(T^d))$-topology. Furthermore, for all $p\in (p_0,\infty]\cup\lbrace 0,1 \rbrace$,
$$\Vert f \Vert_{\Hil_{\omega}^p} \asymp \inf\lbrace \Vert g \Vert_{\ell^p_{\omega}(X;\Hil)} : g\in \ell^p_{\omega}(X;\Hil), f = D_Tg \rbrace .$$
\end{proposition}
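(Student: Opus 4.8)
The plan is to push everything through the isometry $C_{T^d}$ of Corollary \ref{isometrycor} and thereby reduce all four assertions to properties of the single bounded idempotent $G_{T^d,T}=C_{T^d}D_T$ acting on $\ell^p_\omega(X;\Hil)$.

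First I would record the structural facts. Since $T^d\sim_\A T$ we have $G_{T^d,T}\in\A$, so $G_{T^d,T}\in\B(\ell^p_\omega(X;\Hil))$ for every $p\in(p_0,\infty]\cup\{1\}$ by $(\A,p_0)$-admissibility; for $p=0$ one argues separately that $G_{T^d,T}$ maps $\ell^0_\omega(X;\Hil)$ into itself: by solidity, Lemma \ref{vectortoscalar} and admissibility at $p=1$ the scalar matrix $[\Vert T^d_kT^*_l\Vert]_{k,l}$ lies in $\B(\ell^1_\omega(X))$, which forces each column to be $\ell^1_\omega$-summable, hence $\Vert T^d_kT^*_l\Vert\,\omega(k)\to 0$ as $k\to\infty$ for fixed $l$; evaluating at the standard basis gives $G_{T^d,T}(\ell^{00}(X;\Hil))\subseteq\ell^0_\omega(X;\Hil)$, and density of $\ell^{00}$ in $\ell^0_\omega$ finishes it. That $G_{T^d,T}$ is idempotent follows from $G_{T^d,T}^2=[T^d_k(\sum_n T^*_nT^d_n)T^*_l]_{k,l}$ together with $\sum_n T^*_nT^d_n=D_TC_{T^d}=\mathcal{I}_{\B(\Hil)}$ from (\ref{dualgframeoperatornotation}). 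Next, by Corollary \ref{isometrycor} (and Lemma \ref{hard} for $p=\infty$), $C_{T^d}$ maps $\Hil^p_\omega(T^d,T)$ isometrically onto a closed subspace $V_p\subseteq\ell^p_\omega(X;\Hil)$; since $C_{T^d}D_T=G_{T^d,T}$ on $\ell^{00}(X;\Hil)$, one identifies $V_p$ with $\overline{G_{T^d,T}(\ell^{00}(X;\Hil))}$ for $p<\infty$ and with $\{g:G_{T^d,T}g=g\}$ for $p=\infty$; because $G_{T^d,T}$ is idempotent and $\ell^{00}$ is dense in $\ell^p_\omega$ for $p<\infty$ (and in $\ell^0_\omega$), in every case $V_p=\range{G_{T^d,T}}$.

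With this in hand the Proposition is bookkeeping. For $p<\infty$, boundedness follows from $\Vert D_Tg\Vert_{\Hil^p_\omega}=\Vert G_{T^d,T}g\Vert_{\ell^p_\omega}\le\Vert G_{T^d,T}\Vert_{\B(\ell^p_\omega(X;\Hil))}\Vert g\Vert_{\ell^p_\omega}$ on $\ell^{00}$, extended by density, which also upgrades $C_{T^d}D_T=G_{T^d,T}$ to all of $\ell^p_\omega$; under $C_{T^d}$ the operator $D_T$ becomes $G_{T^d,T}:\ell^p_\omega\to V_p=\range{G_{T^d,T}}$, hence surjective. For the norm equivalence, given $f\in\Hil^p_\omega(T^d,T)$ the choice $g:=C_{T^d}f\in V_p$ satisfies $C_{T^d}(D_Tg)=G_{T^d,T}g=g=C_{T^d}f$, so $D_Tg=f$ with $\Vert g\Vert_{\ell^p_\omega}=\Vert f\Vert_{\Hil^p_\omega}$, giving $\inf\{\Vert g\Vert:D_Tg=f\}\le\Vert f\Vert_{\Hil^p_\omega}$, while the reverse inequality is just boundedness. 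Unconditional convergence of $\sum_l T^*_lg_l$ in $\Hil^p_\omega(T^d,T)$ then follows because the finite partial sums of $g$ converge to $g$ unconditionally in $\ell^p_\omega$ (for $p<\infty$, and for $p=0$ because membership in $\ell^0_\omega$ forces the weighted tails to vanish), so applying the bounded $G_{T^d,T}$ and the isometry $C_{T^d}$ transports this to $\Hil^p_\omega(T^d,T)$.

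The case $p=\infty$ is the delicate one and must be done by hand, since $\ell^{00}$ is not dense in $\ell^\infty_\omega$. For $g\in\ell^\infty_\omega(X;\Hil)$ I would first check, exactly as in the proof of Lemma \ref{hard} and using $\Vert(\omega(l)^{-1})_l\Vert_{\ell^\infty_\omega}=1$, that $\sum_l\Vert[G_{T^d,T}]_{k,l}\Vert\,\omega(l)^{-1}<\infty$ for each $k$; this absolute convergence shows that $\lim_n T^d_kf_n=\sum_l[G_{T^d,T}]_{k,l}g_l$ exists in $\Hil$ independently of the exhaustion $\{F_n\}$, that $\sup_n\Vert C_{T^d}f_n\Vert_{\ell^\infty_\omega}<\infty$, and hence that $D_Tg=[\{f_n\}]$ is a well-defined element of $\Hil^\infty_\omega(T^d,T)$ with $C_{T^d}(D_Tg)=G_{T^d,T}g$; boundedness, surjectivity onto $V_\infty=\range{G_{T^d,T}}$, and the norm equivalence then go through exactly as above, while independence of $D_Tg$ from $\{F_n\}$ is read off from the common pointwise limits via Definition \ref{ERHw}. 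Finally, testing the partial sums $\sum_{l\in F}T^*_lg_l$ against an arbitrary $g'=\sum_{k\in K}(T^d_k)^*h_k\in\Hil^{00}(T^d)$ reduces to the absolutely convergent sums $\sum_l\langle h_k,[G_{T^d,T}]_{k,l}g_l\rangle$, which converge over the net of finite $F$ regardless of ordering; this is precisely unconditional convergence in the $\sigma(\Hil,\Hil^{00}(T^d))$-topology. The main obstacle throughout is this $p=\infty$ analysis, and secondarily the verification $G_{T^d,T}\in\B(\ell^0_\omega(X;\Hil))$ — both hinge on the row/column $\ell^1_\omega$-summability that $(\A,p_0)$-admissibility imposes on members of $\A$.
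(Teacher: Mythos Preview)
Your proposal is correct and follows essentially the same strategy as the paper: reduce everything to the bounded matrix $G_{T^d,T}=C_{T^d}D_T$ on $\ell^p_\omega(X;\Hil)$ via the isometry $C_{T^d}$. The paper establishes boundedness exactly as you do, proves surjectivity for $p<\infty$ by a direct limit construction (take a representing Cauchy sequence $\{f_n\}$, pass to the $\ell^p_\omega$-limit $g$ of $C_{T^d}f_n$, and check $D_Tg=\widetilde f$), and treats $p=\infty$ by the same absolute-summability estimate $\sum_l\Vert[G_{T^d,T}]_{k,l}\Vert\,\omega(l)^{-1}<\infty$ that you invoke.

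Your organization is marginally cleaner in two respects. First, you make the idempotency $G_{T^d,T}^2=G_{T^d,T}$ explicit and identify $C_{T^d}(\Hil^p_\omega(T^d,T))=\mathrm{ran}(G_{T^d,T})$, which turns surjectivity and the norm equivalence into one-line observations; the paper instead constructs preimages by hand and defers the norm equivalence to a ``standard routine''. Second, you verify that $G_{T^d,T}$ maps $\ell^{00}(X;\Hil)$ into $\ell^0_\omega(X;\Hil)$ (via $\ell^1_\omega$-summability of columns), a point the paper uses implicitly in the $p=0$ case but does not spell out. For $p=\infty$, your argument that independence of $\{F_n\}$ follows directly from the common pointwise limits is also slightly more direct than the paper's transitivity argument through $H_n=F_n\cup G_n$. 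None of these are genuinely different routes --- the underlying mechanism is identical.
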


\begin{proof}
First, let $p \in (p_0,\infty)\cup\lbrace 0,1 \rbrace$. For $f=D_T(g_l)_{l\in X} \in \Hil^{00}(T)$ with $g=(g_l)_{l\in X}\in \ell^{00}(X;\Hil)$ we have $C_{T^d}f = G_{T^d,T} g$ and thus 
\begin{equation}\label{firstbounded}
\Vert f \Vert_{\Hil_{\omega}^p(T^d,T)} = \Vert G_{T^d,T} g \Vert_{\ell^p_{\omega}(X;\Hil)} \leq \Vert G_{T^d,T} \Vert_{\B(\ell^p_{\omega}(X;\Hil))}  \Vert g \Vert_{\ell^p_{\omega}(X;\Hil)}   
\end{equation}
by the $\A$-admissibility of $\omega$ and since $G_{T^d,T} \in \A$ by assumption. By density, the inequality (\ref{firstbounded}) extends to all $g\in \ell^p_{\omega}(X;\Hil)$, so $D_T$ is bounded. Now, if $\widetilde{f} = [\lbrace f_n \rbrace_{n=1}^{\infty}]\in \Hil_{\omega}^p(T^d,T)$ for some Cauchy sequence $\lbrace f_n \rbrace_{n=1}^{\infty}$ in $\Hil^{00}(T)$, then $\lbrace C_{T^d}f_n \rbrace_{n=1}^{\infty}$ is a Cauchy sequence in $\ell^p_{\omega}(X;\Hil)$, which has some limit $g\in \ell^p_{\omega}(X;\Hil)$. Writing $f:= D_Tg \in \Hil_{\omega}^p(T^d,T)$ we see that 
\begin{flalign}
\lim_{n\rightarrow \infty}\Vert f - f_n\Vert_{\Hil_{\omega}^p(T^d,T)} &= \lim_{n\rightarrow \infty} \Vert C_{T^d} (f - f_n) \Vert_{\ell^p_{\omega}(X;\Hil)} \notag \\
&\leq \Vert G_{T^d,T} \Vert_{\B(\ell^p_{\omega}(X;\Hil))} \lim_{n\rightarrow \infty} \Vert g - C_{T^d}f_n \Vert_{\ell^p_{\omega}(X;\Hil)} = 0, \notag
\end{flalign}
where we used that $G_{T^d,T}C_{T^d} = C_{T^d}$ holds true on $\Hil^{00}(T)$ due to $T^d$ and $T$ being a pair of dual g-frames. Consequently, $\widetilde{f} = f$ in $\Hil_{\omega}^p(T^d,T)$ and since $f = D_Tg$, we may conclude that $D_T:\ell^p_{\omega}(X;\Hil) \longrightarrow \Hil_{\omega}^p(T^d,T)$ is onto. The unconditional convergence of the series $D_Tg$ in $\Hil_{\omega}^p(T^d,T)$ is easily deduced from the latter argument.


Now consider the case $p=\infty$. Pick some arbitrary $g=(g_l)_{l\in X} \in \ell_{\omega}^{\infty}(X;\Hil)$ and assume that $F_1 \subseteq F_2\subseteq \dots$ is a nested sequence of finite subsets of $X$ such that $\bigcup_{n=1}^{\infty} F_n = X$. Then $f_n = \sum_{l\in F_n} T_l^* g_l$ is contained in $\Hil$ for each $n\in \mathbb{N}$ and 
$$\lim_{n\rightarrow \infty} T^d_k f_n = \lim_{n\rightarrow \infty} \sum_{l\in F_n}T^d_k T_l^* g_l = \sum_{l\in X} [G_{T^d,T}]_{k,l} g_l = [G_{T^d,T} g]_k \in \Hil$$
for each $k\in X$, since $G_{T^d,T} \in \A \subseteq \B(\ell_{\omega}^{\infty}(X;\Hil))$ by assumption. Furthermore, if $G$ denotes the scalar matrix $[\Vert T^d_k T_l^* \Vert]_{k,l\in X}$ as in the proof of Lemma \ref{hard}, we have that
\begin{flalign}\label{againG}
\sup_{n\in \mathbb{N}}\sup_{k\in X}\Vert T^d_k f_n \Vert \omega(k) &\leq \sup_{n\in \mathbb{N}}\sup_{k\in X} \sum_{l\in F_n} \Vert T^d_k T_l^* \Vert \Vert g_l \Vert \omega(k) \notag \\
&\leq \sup_{k\in X} \left\vert \sum_{l\in X} \Vert T^d_k T_l^* \Vert \Vert g_l \Vert \right\vert \omega(k) \notag \\
&\leq \Vert G \Vert_{\B(\ell^{\infty}_{\omega}(X))} \Vert g \Vert_{\ell_{\omega}^{\infty}(X;\Hil)}, 
\end{flalign}
where we used that $(\Vert g_l \Vert)_{l\in X}$ is a scalar sequence in $\ell_{\omega}^{\infty}(X)$ with norm equal to $\Vert g \Vert_{\ell_{\omega}^{\infty}(X;\Hil)}$.
Thus, $D_Tg = [\lbrace f_n \rbrace_{n=1}^{\infty}]_{\sim_{T^d,\omega}} \in \Hil_{\omega}^{\infty}(T^d,T)$ with $C_{T^d}(D_Tg) = G_{T^d,T}g$. The boundedness of $D_T$ now follows from 
$$\Vert D_Tg \Vert_{\Hil_{\omega}^{\infty}(T^d,T)} = \Vert G_{T^d,T} g \Vert_{\ell_{\omega}^{\infty}(X;\Hil)}  \leq \Vert G_{T^d,T} \Vert_{\B(\ell_{\omega}^{\infty}(X;\Hil))} \Vert g \Vert_{\ell_{\omega}^{\infty}(X;\Hil)}.$$
Next, let 
$G_1 \subseteq G_2\subseteq \dots$ be another nested sequence of finite subsets of $X$ with $\bigcup_{n=1}^{\infty} G_n = X$. We will show that $\lbrace \sum_{l\in G_n}T_l^* g_l \rbrace_{n=1}^{\infty} \sim_{T^d,\omega} \lbrace \sum_{l\in F_n}T_l^* g_l \rbrace_{n=1}^{\infty}$, which, by (\ref{Cauchyneteqr}), implies that both sequences belong to the same equivalence class in $\overline{(\Hil, \sigma(\Hil, \Hil^{00}(T^d)))}$, yielding the unconditional convergence of $D_Tg$ in the $\sigma(\Hil, \Hil^{00}(T^d))$-topology. Setting $H_n = F_n \cup G_n$ ($n\in \mathbb{N}$) gives a nested sequence of finite subsets whose union is $X$ as well. Then, 
for each $k\in X$,
\begin{flalign}
\lim_{n\rightarrow \infty} \left\Vert T^d_k \left(\sum_{l\in H_n} T_l^* g_l - \sum_{l\in F_n} T_l^* g_l \right) \right\Vert &= \lim_{n\rightarrow \infty} \left\Vert \sum_{l\in H_n\setminus F_n}  T^d_k T_l^* g_l \right\Vert \notag \\
&\leq \lim_{n\rightarrow \infty} \sum_{l\in X\setminus F_n} \Vert T^d_k T_l^* g_l \Vert \notag \\
&\leq \Vert g \Vert_{\ell^{\infty}_{\omega}(X;\Hil)} \lim_{n\rightarrow \infty} \sum_{l\in X\setminus F_n} \Vert T^d_k T_l^* \Vert \omega(l)^{-1} \notag \\
&=0. \notag
\end{flalign}
The last step follows from convergence of $\sum_{l\in X} \Vert T^d_k T_l^* \Vert \omega(l)^{-1}$ for each $k\in X$, which we already showed in ($\ast$) in the proof of Lemma \ref{hard}. Clearly, it also holds 
$$\sup_{n\in \mathbb{N}}\left\Vert C_{T^d} \left( \sum_{l\in H_n} T_l^* g_l - \sum_{l\in F_n} T_l^* g_l \right) \right\Vert_{\ell^{\infty}_{\omega}(X;\Hil)} < \infty.$$
Thus we have shown that $\lbrace \sum_{l\in H_n}T_l^* g_l \rbrace_{n=1}^{\infty} \sim_{T^d,\omega} \lbrace \sum_{l\in F_n}T_l^* g_l \rbrace_{n=1}^{\infty}$. 
The same argument holds if we replace each $F_n$ by $G_n$. Therefore, $\lbrace \sum_{l\in G_n}T_l^* g_l \rbrace_{n=1}^{\infty} \sim_{T^d,\omega} \lbrace \sum_{l\in F_n}T_l^* g_l \rbrace_{n=1}^{\infty}$, since $\sim_{T^d,\omega}$ is an equivalence relation. Finally, we show that $D_T$ is onto. Choose some arbitrary $f=[\lbrace f_n \rbrace_{n=1}^{\infty}]_{\sim_{T^d,\omega}} \in \Hil_{\omega}^{\infty}(T^d,T)$. Then $C_{T^d}f \in \ell_{\omega}^{\infty}(X;\Hil)$ by Corollary \ref{isometrycor} and $D_TC_{T^d}f \in \Hil_{\omega}^{\infty}(T^d,T)$ by the above. We claim that $f=D_TC_{T^d}f = [\lbrace \sum_{l\in F_n} T_l^* (T^d_l f) \rbrace_{n=1}^{\infty}]_{\sim_{T^d,\omega}}$ in $\Hil_{\omega}^{\infty}(T^d,T)$. To prove the claim, it suffices to show that $\lbrace \sum_{l\in F_n} T_l^* (T^d_l f) \rbrace_{n=1}^{\infty} \sim_{T^d,\omega} \lbrace f_n \rbrace_{n=1}^{\infty}$. Now, due to Lemma \ref{hard} we have   $C_{T^d}(D_TC_{T^d}f) = G_{T^d,T} C_{T^d}f = C_{T^d}f$, hence
$$\lim_{n\rightarrow \infty} \left\Vert T^d_k \left( \sum_{l\in F_n} T_l^* (T^d_l f) - f_n \right) \right\Vert = \left\Vert \sum_{l\in X} T^d_k T_l^* (T^d_l f) - T^d_k f \right\Vert = 0 \qquad (\forall k\in X).$$
By repeating the routine (\ref{againG}), we also see that  
$$\sup_{n\in \mathbb{N}}\left\Vert C_{T^d}\left(\sum_{l\in F_n} T_l^* (T^d_l f ) - f_n \right)\right\Vert_{\ell_{\omega}^{\infty}(X;\Hil)} <\infty$$
and the claim is proven.

Finally, showing the corresponding norm equivalences is a standard routine, see e.g. the proof of \cite[Proposition 2.4]{forngroech1}.
\end{proof}

\begin{theorem}\label{Banachgframeexpansion}
Let $\A$ be a spectral algebra, $\omega$ be an $(\A,p_0)$-admissible weight, $T=(T_k)_{k\in X}$ be a g-frame and $T^d = (T^d_k)_{k\in X}$ be a dual g-frame of $T$ such that $T^d\sim_{\A} T$. Then for each $p\in (p_0,\infty]\cup \lbrace 0,1 \rbrace$ it holds
\begin{equation}\label{Banachgframerec1}
    f = \sum_{k\in X} T_k^* T^d_k f \qquad (\forall f\in \Hil_{\omega}^p(T^d,T)),
\end{equation}
with unconditional convergence in $\Hil_{\omega}^p(T^d,T)$ for $p<\infty$ 
and unconditional convergence in the $\sigma(\Hil, \Hil^{00}(T^d))$-topology in the case $p=\infty$ respectively.
\end{theorem}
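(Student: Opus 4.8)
The plan is to obtain Theorem~\ref{Banachgframeexpansion} as an essentially immediate consequence of Proposition~\ref{Donto} together with the isometry statements in Corollary~\ref{isometrycor} and Lemma~\ref{hard}. The single fact that needs to be isolated is that the composition $D_T C_{T^d}$ is the identity on $\Hil_{\omega}^p(T^d,T)$ for every admissible $p$; feeding $C_{T^d}f = (T^d_k f)_{k\in X}$ into the synthesis operator and recalling that, by Proposition~\ref{Donto}, $D_T(h_l)_{l\in X} = \sum_{l\in X} T_l^* h_l$ converges unconditionally (in $\Hil_{\omega}^p(T^d,T)$ for $p<\infty$, and in the $\sigma(\Hil,\Hil^{00}(T^d))$-topology for $p=\infty$), then yields (\ref{Banachgframerec1}) with the claimed modes of convergence.

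First I would dispose of the case $p\in(p_0,\infty)\cup\{0,1\}$. On the dense subspace $\Hil^{00}(T)$ of $\Hil_{\omega}^p(T^d,T)$ every element $f$ lies in $\Hil$, so the dual $g$-frame relation (\ref{dualgframe}) gives $D_T C_{T^d} f = \sum_{k\in X} T_k^* T^d_k f = f$ already in $\Hil$. Since $C_{T^d}\colon \Hil_{\omega}^p(T^d,T)\to\ell_{\omega}^p(X;\Hil)$ is (isometric, hence) bounded by Corollary~\ref{isometrycor}, and $D_T\colon\ell_{\omega}^p(X;\Hil)\to\Hil_{\omega}^p(T^d,T)$ is bounded by Proposition~\ref{Donto}, the identity $D_T C_{T^d} = \identity{\Hil_{\omega}^p(T^d,T)}$ extends from $\Hil^{00}(T)$ to all of $\Hil_{\omega}^p(T^d,T)$ by density and continuity. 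Thus $f = D_T C_{T^d} f = D_T (T^d_k f)_{k\in X} = \sum_{k\in X} T_k^* T^d_k f$, and the unconditional convergence in $\Hil_{\omega}^p(T^d,T)$ is exactly what Proposition~\ref{Donto} provides for the synthesis series.

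For $p=\infty$ I would argue directly on equivalence classes, since $\Hil_{\omega}^{\infty}(T^d,T)$ is not defined as a completion of $\Hil^{00}(T)$. Fix $f\in\Hil_{\omega}^{\infty}(T^d,T)$. By Lemma~\ref{hard}, $C_{T^d}f = (T^d_k f)_{k\in X}$ belongs to $V=\{g\in\ell_{\omega}^{\infty}(X;\Hil):g=G_{T^d,T}g\}$, so $G_{T^d,T} C_{T^d}f = C_{T^d}f$. On the other hand, Proposition~\ref{Donto} shows that $D_T$ maps $\ell_{\omega}^{\infty}(X;\Hil)$ into $\Hil_{\omega}^{\infty}(T^d,T)$ with $C_{T^d}(D_T g)=G_{T^d,T}g$; taking $g=C_{T^d}f$ gives $C_{T^d}(D_T C_{T^d}f)=G_{T^d,T}C_{T^d}f=C_{T^d}f$. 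As $C_{T^d}$ is injective (being an isometry onto $V$, by Lemma~\ref{hard}), this forces $D_T C_{T^d}f=f$, i.e. $f=\sum_{k\in X} T_k^* T^d_k f$, where by the definition of $D_T$ on $\ell_{\omega}^{\infty}(X;\Hil)$ in Proposition~\ref{Donto} the series converges unconditionally in the $\sigma(\Hil,\Hil^{00}(T^d))$-topology and is independent of the exhausting sequence $\{F_n\}_{n=1}^{\infty}$. In fact this case is already implicit in the last paragraph of the proof of Proposition~\ref{Donto}.

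I do not expect a genuine obstacle here: the theorem is a repackaging of results already proved, and the only care needed is the bookkeeping in the $p=\infty$ case — remembering that elements of $\Hil_{\omega}^{\infty}(T^d,T)$ are equivalence classes of sequences, that $C_{T^d}f$ is read off componentwise as a pointwise limit, and that ``$D_T C_{T^d}f=f$'' is an equality of equivalence classes. No new estimates beyond those in Lemma~\ref{hard} and Proposition~\ref{Donto} are required, and the accompanying norm bookkeeping is the same routine as in \cite{forngroech1}.
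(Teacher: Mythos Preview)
Your proposal is correct and follows essentially the same approach as the paper: for $p<\infty$ you use boundedness of $D_T C_{T^d}$ (via Proposition~\ref{Donto} and Corollary~\ref{isometrycor}) together with the dual g-frame identity on the dense subspace $\Hil^{00}(T)$, and for $p=\infty$ you invoke the argument already contained in the last part of the proof of Proposition~\ref{Donto}. The only difference is cosmetic: the paper simply points back to Proposition~\ref{Donto} for the case $p=\infty$, whereas you spell out the injectivity-of-$C_{T^d}$ step explicitly.
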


\begin{proof}
The case $p=\infty$ was already treated in the proof of Proposition \ref{Donto}. So, let $p\in (p_0,\infty)\cup \lbrace 0,1 \rbrace$. By Proposition \ref{Donto} and Corollary \ref{isometrycor} the composition $D_T C_{T^d}$ defines a bounded operator on $\Hil_{\omega}^p(T^d,T)$ whose associated series converges unconditionally. Moreover, (\ref{Banachgframerec1}) holds true on the subspace $\Hil^{00}(T)$ of $\Hil$ since $T^d$ and $T$ is a pair of dual g-frames. Hence, by density of $\Hil^{00}(T)$ in $\Hil_{\omega}^p(T^d,T)$ and boundedness of $D_T C_{T^d}$, this identity extends to all of $\Hil_{\omega}^p(T^d,T)$.
\end{proof}

So far, the spaces $\Hil_{\omega}^p(T^d,T)$ seem to be heavily dependent on the choice of the dual g-frame pair $(T^d,T)$. However, we will see that $\Hil_{\omega}^p(T^d,T)$ is independent of the choice of the dual g-frame pair $(T^d,T)$ as long as we stay in the same localization class $\sim_{\A}$. Before we give a precise formulation of the latter statement, we present an alternative description of the spaces $\Hil_{\omega}^p(T^d,T)$ for $p<\infty$. 

\begin{proposition}\label{Vpw}
Let $\A$ be a spectral algebra, $\omega$ be an $(\A,p_0)$-admissible weight, $T=(T_k)_{k\in X}$ be a g-frame and $T^d = (T^d_k)_{k\in X}$ be a dual g-frame of $T$ such that $T^d\sim_{\A} T$. Then for each $p \in (p_0,\infty)\cup\lbrace 0,1 \rbrace$, $\Hil_{\omega}^p(T^d,T)$ is the completion of the space 
$$V_{\omega}^p(T^d,T) = \lbrace f\in \Hil : C_{T^d}f \in \ell^{p}_{\omega}(X;\Hil) \rbrace$$
with respect to $\Vert \, . \, \Vert_{\Hil_{\omega}^p(T^d,T)}$.
\end{proposition}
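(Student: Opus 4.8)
The plan is to show that $\Hil^{00}(T)$ is a \emph{dense linear subspace} of $\big(V_\omega^p(T^d,T),\,\Vert\,\cdot\,\Vert_{\Hil_\omega^p(T^d,T)}\big)$; since a dense subspace of a (quasi-)normed space admits the same completion, and $\Hil_\omega^p(T^d,T)$ is by definition the completion of $\Hil^{00}(T)$, this yields the claim. It is convenient to carry everything out inside the (quasi-)Banach space $\ell_\omega^p(X;\Hil)$: by Corollary \ref{isometrycor}, $C_{T^d}$ identifies $\Hil_\omega^p(T^d,T)$ with the closure of $C_{T^d}\big(\Hil^{00}(T)\big)$ in $\ell_\omega^p(X;\Hil)$, while $C_{T^d}$ also embeds $V_\omega^p(T^d,T)$ isometrically into $\ell_\omega^p(X;\Hil)$ (note that $\Vert\,\cdot\,\Vert_{\Hil_\omega^p(T^d,T)}$ is a genuine (quasi-)norm on $V_\omega^p(T^d,T)$, not merely a semi-norm, because $C_{T^d}\colon\Hil\to\ell^2(X;\Hil)$ is injective). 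So it suffices to prove that $C_{T^d}\big(\Hil^{00}(T)\big)$ is dense in $C_{T^d}\big(V_\omega^p(T^d,T)\big)$, i.e.\ that $\Hil^{00}(T)$ is dense in $V_\omega^p(T^d,T)$ in the $\Vert\,\cdot\,\Vert_{\Hil_\omega^p(T^d,T)}$-(quasi-)norm.

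First I would record the inclusion $\Hil^{00}(T)\subseteq V_\omega^p(T^d,T)$. If $f=D_Tg$ with $g\in\ell^{00}(X;\Hil)$, then $C_{T^d}f=C_{T^d}D_Tg=G_{T^d,T}g$ by Remark \ref{matrixDC}; since $G_{T^d,T}\in\A$ and $\omega$ is $(\A,p_0)$-admissible, $G_{T^d,T}g\in\ell_\omega^p(X;\Hil)$ whenever $p\in(p_0,\infty)\cup\{1\}$. For $p=0$ one additionally uses that $G_{T^d,T}\in\B(\ell^1_\omega(X;\Hil))$ and $\ell^1_\omega(X;\Hil)\subseteq\ell^0_\omega(X;\Hil)$ (the entries of a weighted $\ell^1$-sequence tend to $0$), so that $G_{T^d,T}g\in\ell^1_\omega(X;\Hil)\subseteq\ell^0_\omega(X;\Hil)$. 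In all cases $C_{T^d}f\in\ell_\omega^p(X;\Hil)$, i.e.\ $f\in V_\omega^p(T^d,T)$.

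For the density step, fix $f\in V_\omega^p(T^d,T)$ and set $g:=C_{T^d}f$, so $g\in\ell_\omega^p(X;\Hil)$ and also $g\in\ell^2(X;\Hil)$ since $T^d$ is g-Bessel. Because $T^d$ is a dual g-frame of $T$, identity (\ref{dualgframeoperatornotation}) gives $D_TC_{T^d}=\mathcal{I}_{\B(\Hil)}$, hence $f=D_Tg$ and therefore
$$g=C_{T^d}f=C_{T^d}D_Tg=G_{T^d,T}\,g,$$
an identity that holds simultaneously in $\ell^2(X;\Hil)$ and in $\ell_\omega^p(X;\Hil)$, since the $\ell^2$- and $\ell_\omega^p$-realizations of $G_{T^d,T}\in\A$ agree on $\ell^2\cap\ell_\omega^p$ (they coincide on the common dense subspace $\ell^{00}(X;\Hil)$, and both are continuous coordinate-wise). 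Now given $\varepsilon>0$, pick $g^{(\varepsilon)}\in\ell^{00}(X;\Hil)$ with $\Vert g-g^{(\varepsilon)}\Vert_{\ell_\omega^p(X;\Hil)}$ as small as needed — possible because $\ell^{00}(X;\Hil)$ is dense in $\ell_\omega^p(X;\Hil)$ for $p\in(p_0,\infty)\cup\{1\}$ and in $\ell_\omega^0(X;\Hil)$ by its very definition — and put $f_\varepsilon:=D_Tg^{(\varepsilon)}\in\Hil^{00}(T)$. Then $C_{T^d}(f-f_\varepsilon)=g-G_{T^d,T}g^{(\varepsilon)}=G_{T^d,T}\big(g-g^{(\varepsilon)}\big)$, so
$$\Vert f-f_\varepsilon\Vert_{\Hil_\omega^p(T^d,T)}=\big\Vert G_{T^d,T}\big(g-g^{(\varepsilon)}\big)\big\Vert_{\ell_\omega^p(X;\Hil)}\le\Vert G_{T^d,T}\Vert_{\B(\ell_\omega^p(X;\Hil))}\Vert g-g^{(\varepsilon)}\Vert_{\ell_\omega^p(X;\Hil)}<\varepsilon$$
for a suitable choice of $g^{(\varepsilon)}$ (for $p=0$ the same argument runs with the $\ell_\omega^\infty$-norm in place of the $\ell_\omega^p$-norm; for $p_0<p<1$ the operator quasi-norm estimate and the approximation argument are unchanged). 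Hence $\Hil^{00}(T)$ is dense in $V_\omega^p(T^d,T)$, which completes the proof. The only genuinely delicate point is the bookkeeping indicated above — reading the identity $g=G_{T^d,T}g$ and the operator $G_{T^d,T}$ consistently across the Hilbert space $\ell^2(X;\Hil)$ and the (quasi-)Banach spaces $\ell_\omega^p(X;\Hil)$, and handling the boundary cases $p=0$ and $0<p<1$ — all of which reduce to density of $\ell^{00}(X;\Hil)$ together with the $(\A,p_0)$-admissibility of $\omega$.
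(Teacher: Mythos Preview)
Your proof is correct and follows essentially the same route as the paper: both arguments sandwich $V_\omega^p(T^d,T)$ between $\Hil^{00}(T)$ and its completion by exploiting that $G_{T^d,T}\in\B(\ell_\omega^p(X;\Hil))$ together with the density of $\ell^{00}(X;\Hil)$ in $\ell_\omega^p(X;\Hil)$. The only cosmetic difference is that the paper invokes Proposition~\ref{Donto} (boundedness of $D_T:\ell_\omega^p\to\Hil_\omega^p$) to obtain the inclusion $V_\omega^p(T^d,T)\subseteq\Hil_\omega^p(T^d,T)$ directly, whereas you unpack that same estimate inline to show $\Hil^{00}(T)$ is dense in $V_\omega^p(T^d,T)$; the underlying computation $\Vert C_{T^d}(f-f_\varepsilon)\Vert_{\ell_\omega^p}=\Vert G_{T^d,T}(g-g^{(\varepsilon)})\Vert_{\ell_\omega^p}\le\Vert G_{T^d,T}\Vert\,\Vert g-g^{(\varepsilon)}\Vert_{\ell_\omega^p}$ is identical in both.
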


\begin{proof}
Let $f\in V_{\omega}^p(T^d,T)$ be arbitrary. Then, by dual g-frame reconstruction, $f= D_T C_{T^d} f = \sum_{k\in X} T_k^* T^d_k f$ with unconditional convergence in $\Hil$. This series converges unconditionally in $\Hil_{\omega}^p(T^d,T)$ as well, since $C_{T^d}f \in \ell^{p}_{\omega}(X;\Hil)$ by assumption and thus $f=D_T C_{T^d} f \in \Hil_{\omega}^p(T^d,T)$ by Proposition \ref{Donto} and Theorem \ref{Banachgframeexpansion}. Hence $V_{\omega}^p(T^d,T) \subseteq \Hil_{\omega}^p(T^d,T)$. At the same time, we clearly have that $\Hil^{00}(T) = D_T(\ell^{00}(X;\Hil)) \subseteq V_{\omega}^p(T^d,T)$. Since $\Hil^{00}(T)$ is dense in $\Hil_{\omega}^p(T^d,T)$ by definition, the claim follows.
\end{proof}

For exponents $p<\infty$ and weights $\omega$ chosen in such a way that $\ell^p_{\omega}(X;\Hil)$ is continuously embedded in $\ell^2(X;\Hil)$, we obtain the following consequence of the latter, which is the g-frame theoretic version of \cite[Proposition 2.3]{forngroech1}. 

\begin{corollary}\label{easydescription}
Let $\A$ be a spectral algebra, $\omega$ be an $\A$-admissible weight, $T=(T_k)_{k\in X}$ be a g-frame and $T^d = (T^d_k)_{k\in X}$ be a dual g-frame of $T$ such that $T^d\sim_{\A} T$. Let $p<\infty$ and $\omega$ be so that $\ell^p_{\omega}(X;\Hil)$ is continuously embedded in $\ell^2(X;\Hil)$. Then
$$\Hil_{\omega}^p(T^d,T) = V_{\omega}^p(T^d,T) .$$
\end{corollary}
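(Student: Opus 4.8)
The plan is to show that the completion step in Proposition~\ref{Vpw} is superfluous under the extra hypothesis, i.e.\ that $V_{\omega}^p(T^d,T)$ is already complete with respect to $\Vert\,.\,\Vert_{\Hil_{\omega}^p(T^d,T)}$. By Proposition~\ref{Vpw} it suffices to prove that $V_{\omega}^p(T^d,T)$ is a Banach space (resp.\ quasi-Banach space for $p_0<p<1$) in this (quasi-)norm, since a complete subspace of a completion is the whole completion. First I would recall from Corollary~\ref{isometrycor} that $C_{T^d}$ is an isometry from $(\Hil_{\omega}^p(T^d,T),\Vert\,.\,\Vert_{\Hil_{\omega}^p(T^d,T)})$ into $\ell^p_{\omega}(X;\Hil)$; restricted to $V_{\omega}^p(T^d,T)$ this is simply the genuine analysis operator $f\mapsto (T^d_k f)_{k\in X}$, and by Proposition~\ref{Vpw}'s reconstruction identity $f=D_TC_{T^d}f$ it is injective on $V_{\omega}^p(T^d,T)$ with the pseudo-inverse relation $D_T C_{T^d}|_{V_{\omega}^p}=\identity{V_{\omega}^p}$.

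The key step is a completeness argument. Take a Cauchy sequence $\lbrace f_n \rbrace_{n=1}^{\infty}$ in $(V_{\omega}^p(T^d,T),\Vert\,.\,\Vert_{\Hil_{\omega}^p(T^d,T)})$. Then $\lbrace C_{T^d}f_n\rbrace_{n=1}^{\infty}$ is Cauchy in $\ell^p_{\omega}(X;\Hil)$, hence converges to some $g\in \ell^p_{\omega}(X;\Hil)$. Here the added hypothesis enters: since $\ell^p_{\omega}(X;\Hil)\hookrightarrow \ell^2(X;\Hil)$ continuously, we also have $C_{T^d}f_n\to g$ in $\ell^2(X;\Hil)$. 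Now set $f:=D_T g\in\Hil$, using the ordinary $\ell^2$-synthesis operator $D_T\in\B(\ell^2(X;\Hil),\Hil)$; by continuity of $D_T$ on $\ell^2$ and the fact that $D_T C_{T^d}=\identity{\Hil}$ on $\Hil$ (dual g-frame reconstruction), $f_n = D_T C_{T^d} f_n \to D_T g = f$ in $\Hil$. It then remains to check $f\in V_{\omega}^p(T^d,T)$ and $f_n\to f$ in the $\Hil_{\omega}^p$-(quasi-)norm. For the former, $C_{T^d}f = C_{T^d}D_T g = G_{T^d,T}\,g$, and since $G_{T^d,T}\in\A$ is bounded on $\ell^p_\omega(X;\Hil)$ by $(\A,p_0)$-admissibility, $C_{T^d}f\in\ell^p_\omega(X;\Hil)$, so $f\in V_{\omega}^p(T^d,T)$. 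For the latter, $\Vert f-f_n\Vert_{\Hil_{\omega}^p(T^d,T)} = \Vert C_{T^d}f - C_{T^d}f_n\Vert_{\ell^p_\omega(X;\Hil)} = \Vert G_{T^d,T}g - C_{T^d}f_n\Vert_{\ell^p_\omega(X;\Hil)}$; using $G_{T^d,T}C_{T^d}f_n = C_{T^d}f_n$ (valid since $f_n\in\Hil$ and $(T^d,T)$ is a dual pair, cf.\ the proof of Proposition~\ref{Donto}), this equals $\Vert G_{T^d,T}(g-C_{T^d}f_n)\Vert_{\ell^p_\omega(X;\Hil)} \le \Vert G_{T^d,T}\Vert_{\B(\ell^p_\omega(X;\Hil))}\,\Vert g - C_{T^d}f_n\Vert_{\ell^p_\omega(X;\Hil)}\to 0$. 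Thus $V_{\omega}^p(T^d,T)$ is complete, so it coincides with its completion $\Hil_{\omega}^p(T^d,T)$.

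The main obstacle is making sure that the element $f$ produced as the limit genuinely lives in the original Hilbert space $\Hil$ — a priori the completion $\Hil_{\omega}^p(T^d,T)$ is an abstract object and its elements need not be vectors of $\Hil$. This is exactly where the embedding $\ell^p_\omega(X;\Hil)\hookrightarrow\ell^2(X;\Hil)$ does the work: it lets us push the $\ell^p_\omega$-limit $g$ into $\ell^2(X;\Hil)$, apply the honest $\ell^2$-synthesis operator, and land back in $\Hil$, after which the dual-frame reconstruction identities identify this $f$ with the abstract limit. I would also take a moment to note that the argument is uniform in $p$: nothing beyond the (quasi-)triangle inequality is used, so the case $p_0<p<1$ goes through verbatim with the quasi-norm constant absorbed. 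One should also double-check the harmless point that the $\Hil_{\omega}^p$-convergence $f_n\to f$ together with $f\in V_{\omega}^p(T^d,T)$ means the abstract Cauchy sequence's class equals $f$, which is immediate from $C_{T^d}$ being an isometry onto its range.
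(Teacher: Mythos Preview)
Your argument is correct and follows essentially the same strategy as the paper: use the embedding $\ell^p_\omega(X;\Hil)\hookrightarrow\ell^2(X;\Hil)$ to show that any $\Vert\,\cdot\,\Vert_{\Hil^p_\omega}$-Cauchy sequence in $V^p_\omega(T^d,T)$ has its limit in $\Hil$, verify that this limit lies again in $V^p_\omega(T^d,T)$, and conclude via Proposition~\ref{Vpw}. The only cosmetic difference is how you produce the Hilbert-space limit: you synthesize it as $f=D_Tg$ from the $\ell^2$-limit $g$ of $C_{T^d}f_n$, whereas the paper uses the lower g-frame bound of $T^d$ to deduce directly that $\{f_n\}$ is Cauchy in $\Hil$ and then identifies $C_{T^d}f$ with $g$ by componentwise comparison of the $\ell^p_\omega$- and $\ell^2$-limits; both routes arrive at the same $f$ and the same conclusion.
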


\begin{proof}
By Proposition \ref{Vpw}, it remains to show that $\Hil_{\omega}^p(T^d,T) \subseteq V_{\omega}^p(T^d,T)$. Let $\widetilde{f}=[\lbrace f_n \rbrace_{n=1}^{\infty}] \in \Hil_{\omega}^{p}(T^d,T)$ where $\lbrace f_n \rbrace_{n=1}^{\infty} \subseteq V_{\omega}^p(T^d,T)$ is a representative Cauchy sequence in the $\Vert \, . \, \Vert_{\Hil_{\omega}^{p}(T^d,T)}$-norm. If $A$ denotes the lower g-frame bound of $T^d$ and $c$ the bound coming from the continuous embedding of $\ell^p_{\omega}(X;\Hil)$ in $\ell^2(X;\Hil)$, then  
\begin{flalign}
\Vert f_n - f_m \Vert &\leq A^{-1/2} \Vert C_{T^d}(f_n - f_m) \Vert_{\ell^2(X;\Hil)} \notag \\
&\leq A^{-1/2}c \Vert C_{T^d}(f_n - f_m) \Vert_{\ell^p_{\omega}(X;\Hil)} = A^{-1/2}c\Vert f_n - f_m \Vert_{\Hil_{\omega}^{p}(T^d,T)}. \notag
\end{flalign}
Consequently $\lbrace f_n \rbrace_{n=1}^{\infty}$ is a Cauchy sequence in $\Hil$ and thus convergent in $\Hil$ to some $f\in \Hil$. Note that this limit $f$ is independent of our choice of the representative sequence of $\widetilde{f}$. Then, $\lbrace C_{T^d}f_n \rbrace_{n=1}^{\infty}$ is a Cauchy sequence in both $\ell^p_{\omega}(X;\Hil)$ and $\ell^2(X;\Hil)$. By passing onto their respective $\Hil$-valued components, we see that these Cauchy sequences must converge to the same limiting $\Hil$-valued sequence $(h_k)_{k\in X}$. However, by continuity of $C_{T^d}:\Hil\longrightarrow \ell^2(X;\Hil)$, we must have $(h_k)_{k\in X} = C_{T^d} f$. Thus $C_{T^d} f \in \ell^p_{\omega}(X;\Hil) \subseteq \ell^2(X;\Hil)$ and $\widetilde{f} = D_T C_{T^d} f = f \in \Hil$, as was to be shown.
\end{proof}

\begin{remark}
The spaces $V^p(T,T^d)$ have also been studied in \cite{liliu22}, where the case $1\leq p \leq 2$ has been mainly considered. Moreover, in \cite[Example 3.1]{liliu22}, it has been shown that $V^p(T,T^d)$ ($1\leq p \leq 2$) might be the trivial space $\lbrace 0_{\Hil} \rbrace$ if the given g-frame $T$ is not suitably localized. 
\end{remark}

\begin{theorem}\label{equalityofspaces}
Let $\A$ be a spectral algebra and $\omega$ be an $(\A,p_0)$-admissible weight. Let $T=(T_k)_{k\in X}$ and $U=(U_k)_{k\in X}$ be g-frames and assume that $T^d = (T^d_k)_{k\in X}$ is a dual g-frame of $T$ such that $T^d\sim_{\A} T$, and that $U^d = (U^d_k)_{k\in X}$ is a dual g-frame of $U$ such that $U^d\sim_{\A} U$. If $T^d\sim_{\A} U$ and $U^d\sim_{\A} T$, then, for each $p\in (p_0,\infty]\cup \lbrace 0,1 \rbrace$,
$$\Hil_{\omega}^p(T^d,T) = \Hil_{\omega}^p(U^d,U)$$
with equivalent norms. 
\end{theorem}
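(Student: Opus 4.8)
The plan is to handle the two regimes $p<\infty$ and $p=\infty$ separately, using the descriptions of the spaces already obtained. For $p\in(p_0,\infty)\cup\{0,1\}$ one uses Proposition \ref{Vpw}, which presents $\Hil_{\omega}^p(T^d,T)$ as the completion of the honest subspace $V_{\omega}^p(T^d,T)=\{f\in\Hil:C_{T^d}f\in\ell_{\omega}^p(X;\Hil)\}$ of $\Hil$; for $p=\infty$ one uses Lemma \ref{hard}, which identifies $\Hil_{\omega}^{\infty}(T^d,T)$ isometrically, via $C_{T^d}$, with the fixed-point space $V_T:=\{g\in\ell_{\omega}^{\infty}(X;\Hil):g=G_{T^d,T}\,g\}$. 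The common engine is a change-of-dual-frame identity: since $T^d$ is a dual g-frame of $T$, formula (\ref{dualgframe}) gives $f=D_TC_{T^d}f$ for all $f\in\Hil$, and applying the bounded operators $U^d_k$ to this norm-convergent series yields
\[
C_{U^d}f \;=\; C_{U^d}D_TC_{T^d}f \;=\; G_{U^d,T}\,C_{T^d}f \qquad (f\in\Hil).
\]
By the hypothesis $U^d\sim_{\A}T$ we have $G_{U^d,T}\in\A$, and $(\A,p_0)$-admissibility of $\omega$ (together with the constructions of the previous subsections, which put $\A$ into $\B(\ell_{\omega}^p(X;\Hil))$ also for $p\in\{0,\infty\}$) makes $G_{U^d,T}$ bounded on $\ell_{\omega}^p(X;\Hil)$; symmetrically $G_{T^d,U}\in\A$ because $T^d\sim_{\A}U$.

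For $p\in(p_0,\infty)\cup\{0,1\}$ the conclusion is then almost immediate. If $f\in V_{\omega}^p(T^d,T)$ then $C_{T^d}f\in\ell_{\omega}^p(X;\Hil)$, so the displayed identity gives $C_{U^d}f=G_{U^d,T}C_{T^d}f\in\ell_{\omega}^p(X;\Hil)$ with $\Vert C_{U^d}f\Vert_{\ell_{\omega}^p(X;\Hil)}\le\Vert G_{U^d,T}\Vert_{\B(\ell_{\omega}^p(X;\Hil))}\Vert C_{T^d}f\Vert_{\ell_{\omega}^p(X;\Hil)}$; hence $V_{\omega}^p(T^d,T)\subseteq V_{\omega}^p(U^d,U)$ and $\Vert f\Vert_{\Hil_{\omega}^p(U^d,U)}\le\Vert G_{U^d,T}\Vert_{\B(\ell_{\omega}^p(X;\Hil))}\Vert f\Vert_{\Hil_{\omega}^p(T^d,T)}$. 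Interchanging the roles of $(T^d,T)$ and $(U^d,U)$ is legitimate because all four localization hypotheses are symmetric under this exchange, and it produces the reverse inclusion and estimate with constant $\Vert G_{T^d,U}\Vert_{\B(\ell_{\omega}^p(X;\Hil))}$. Thus $V_{\omega}^p(T^d,T)=V_{\omega}^p(U^d,U)$ as sets with equivalent norms; since a sequence is Cauchy, resp. equivalent to another, in one of these norms if and only if in the other, Proposition \ref{Vpw} gives $\Hil_{\omega}^p(T^d,T)=\Hil_{\omega}^p(U^d,U)$ with equivalent norms.

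For $p=\infty$ I would instead show that $G_{U^d,T}$ restricts to a Banach-space isomorphism $V_T\to V_U$, with inverse the restriction of $G_{T^d,U}$ to $V_U$, and then compose with the isometries of Lemma \ref{hard} to obtain $\Hil_{\omega}^{\infty}(T^d,T)\cong V_T\cong V_U\cong\Hil_{\omega}^{\infty}(U^d,U)$. The required facts are the dual-frame operator identities $D_UC_{U^d}=D_{U^d}C_U=\mathcal I_{\B(\Hil)}$ and $D_TC_{T^d}=D_{T^d}C_T=\mathcal I_{\B(\Hil)}$ from (\ref{dualgframeoperatornotation}), from which one computes $G_{U^d,U}G_{U^d,T}=C_{U^d}D_UC_{U^d}D_T=C_{U^d}D_T=G_{U^d,T}$ (so $G_{U^d,T}$ maps $V_T$ into $V_U$), $G_{T^d,U}G_{U^d,T}=C_{T^d}D_UC_{U^d}D_T=C_{T^d}D_T=G_{T^d,T}$ (which acts as the identity on $V_T$), and symmetrically $G_{U^d,T}G_{T^d,U}=G_{U^d,U}$, the identity on $V_U$. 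Boundedness of both restrictions, with operator norms dominated by $\Vert G_{U^d,T}\Vert_{\B(\ell_{\omega}^{\infty}(X;\Hil))}$ and $\Vert G_{T^d,U}\Vert_{\B(\ell_{\omega}^{\infty}(X;\Hil))}$, is again $(\A,p_0)$-admissibility, and these constants govern the resulting norm equivalence.

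I expect the $p=\infty$ case to be the main obstacle. There $\Hil_{\omega}^{\infty}(T^d,T)$ and $\Hil_{\omega}^{\infty}(U^d,U)$ are not literally the same collection of objects — they are spaces of equivalence classes of $\Hil$-valued sequences for the two distinct relations $\sim_{T^d,\omega}$ and $\sim_{U^d,\omega}$, morally biduals rather than subspaces of $\Hil$ — so the identification has to be manufactured through Lemma \ref{hard}, and the real content is checking that the change-of-dual map genuinely respects the fixed-point conditions cutting out $V_T$ and $V_U$; this is precisely what the algebra identities above, resting on the faithful representation and inverse/pseudo-inverse structure of $\A$, are used for. A secondary point that should be spelled out with some care is that $(\A,p_0)$-admissibility, in conjunction with the constructions preceding this theorem, does deliver boundedness of the (mixed) g-Gram matrices on $\ell_{\omega}^0(X;\Hil)$ and on $\ell_{\omega}^{\infty}(X;\Hil)$, not merely on the spaces $\ell_{\omega}^p(X;\Hil)$ with $p_0<p<\infty$.
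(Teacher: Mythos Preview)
Your treatment of $p\in(p_0,\infty)\cup\{0,1\}$ is exactly the paper's: both use Proposition~\ref{Vpw} and the identity $C_{U^d}=G_{U^d,T}C_{T^d}$ on $\Hil$ to show $V_{\omega}^p(T^d,T)=V_{\omega}^p(U^d,U)$ with equivalent norms, and pass to completions.

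For $p=\infty$ your argument via the fixed-point spaces $V_T\cong V_U$ is the content of the paper's Steps~1--2 and correctly yields a Banach-space isomorphism $\Hil_{\omega}^{\infty}(T^d,T)\cong\Hil_{\omega}^{\infty}(U^d,U)$. But the theorem asserts \emph{equality}, and the paper secures this by an additional Step~3: it shows that the two equivalence relations $\sim_{T^d,\omega}$ and $\sim_{U^d,\omega}$ on sequences in $\Hil$ coincide, i.e.\ $[\{f_n\}]_{\sim_{T^d,\omega}}=[\{f_n\}]_{\sim_{U^d,\omega}}$ as sets. Concretely, one must check that if $\lim_n\Vert T^d_k(f_n-g_n)\Vert=0$ for each $k$ and $\sup_n\Vert C_{T^d}(f_n-g_n)\Vert_{\ell_{\omega}^{\infty}}<\infty$, then the same holds with $U^d$ in place of $T^d$; this uses the dominated-convergence argument from the proof of Lemma~\ref{hard} (interchanging $\lim_n$ with $\sum_l[G_{U^d,T}]_{k,l}$) and the identity $C_{U^d}=G_{U^d,T}C_{T^d}$ extended to $\Hil_{\omega}^{\infty}$ in the pointwise-limit sense. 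Once the equivalence classes literally agree, the defining conditions in Definition~\ref{Hwinftydef} transfer and your isomorphism becomes the identity (Step~4). You are aware that your spaces are a priori different collections of objects; this is the step that closes that gap.

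One small correction: the passage of identities such as $G_{T^d,U}G_{U^d,T}=G_{T^d,T}$ from $\B(\ell^2(X;\Hil))$ to $\B(\ell_{\omega}^{\infty}(X;\Hil))$ uses only that $\A\hookrightarrow\B(\ell^2(X;\Hil))$ is a faithful unital $*$-representation together with $(\A,p_0)$-admissibility. Inverse- and pseudo-inverse-closedness of $\A$ play no role in this theorem.
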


\begin{proof}
Let $p\in (p_0,\infty)\cup\lbrace 0,1 \rbrace$. By Proposition \ref{Vpw}, $\Hil_{\omega}^p(T^d,T)$ and $\Hil_{\omega}^p(U^d,U)$ are the completions of $(V_{\omega}^p(T^d,T), \Vert \, . \, \Vert_{\Hil_{\omega}^p(T^d,T)})$ and $(V_{\omega}^p(U^d,U), \Vert \, . \, \Vert_{\Hil_{\omega}^p(U^d,U)})$ respectively. Now if $f\in V_{\omega}^p(T^d,T)$, then $C_{U^d} f =G_{U^d,T} C_{T^d} f$ by g-frame reconstruction in $\Hil$ and thus
$$\Vert C_{U^d} f \Vert_{\ell^p_{\omega}(X;\Hil)} \leq \Vert G_{U^d,T} \Vert_{\B(\ell^p_{\omega}(X;\Hil))} \Vert C_{T^d} f \Vert_{\ell^p_{\omega}(X;\Hil)},$$
where we also used the assumption  $U^d\sim_{\A} T$ and the $\A$-admissibility of $\omega$. This implies that $V_{\omega}^p(T^d,T)$ is continuously embedded in $V_{\omega}^p(U^d,U)$. By switching the roles of $(T^d,T)$ and $(U^d,U)$, we may conclude that $V_{\omega}^p(U^d,U) = V_{\omega}^p(T^d,T)$ with equivalent norms. Consequently $\Hil_{\omega}^p(T^d,T) = \Hil_{\omega}^p(U^d,U)$ with equivalent norms. 

It remains to consider the case $p=\infty$. 

\noindent \emph{Step 1.} Recall from Lemma \ref{hard} that the spaces $V(T^d,T) = \lbrace g\in \ell_{\omega}^{\infty}(X;\Hil): g = G_{T^d,T}g \rbrace$ and $V(U^d,U) = \lbrace g\in \ell_{\omega}^{\infty}(X;\Hil): g = G_{U^d,U}g \rbrace$ are isometrically isomorphic to $\Hil_{\omega}^p(T^d,T)$ and $\Hil_{\omega}^p(U^d,U)$ via $C_{T^d}$ and $C_{U^d}$ respectively. Next, observe that the identities $G_{T^d,T} G_{T^d,U} = G_{T^d,U} = G_{T^d,U} G_{U^d,U}$ are valid in $\B(\ell^2(X;\Hil))$ due to g-frame reconstruction in $\Hil$ of the respective dual g-frame pairs $(T^d,T)$ and $(U^d,U)$. Since each of these occurring g-Gram matrices is contained in $\A$ by assumption, these identities are also valid in $\A$ and consequently in $\B(\ell_{\omega}^{\infty}(X;\Hil))$ by the $(\A,p_0)$-admissibility of $\omega$. This implies that both $G_{T^d,U}: V(U^d,U) \longrightarrow V(T^d,T)$ and $G_{U^d,T}: V(T^d,T) \longrightarrow V(U^d,U)$ are bounded. 

\noindent \emph{Step 2.} 
Next, note that  $C_{U^d} = G_{U^d,T} C_{T^d}$ and $C_{T^d} = G_{T^d,U} C_{U^d}$ as operators $\Hil\longrightarrow \ell^2(X;\Hil)$ by g-frame reconstruction of the dual g-frame pairs $(T^d,T)$ and $(U^d,U)$. By repeating the proof details of Lemma \ref{hard} with $G_{U^d,T}$ instead of $G_{T^d,T}$, we also obtain that $G_{U^d,T}C_{T^d} = C_{U^d}$ as an operator $\Hil_{\omega}^{\infty}(U^d,U)\longrightarrow V(U^d,U)$, and similarly, $G_{T^d,U}C_{U^d} = C_{T^d}$ as an operator $\Hil_{\omega}^{\infty}(T^d,T)\longrightarrow V(T^d,T)$.

\noindent \emph{Step 3.} We show that $[\lbrace f_n \rbrace_{n=1}^{\infty}]_{\sim_{T^d,\omega}} = [\lbrace f_n \rbrace_{n=1}^{\infty}]_{\sim_{U^d,\omega}}$ as sets for all sequences $\lbrace f_n \rbrace_{n=1}^{\infty}$ in $\Hil$. Assume that $\lbrace g_n \rbrace_{n=1}^{\infty}$ and $\lbrace h_n \rbrace_{n=1}^{\infty}$ are two sequences in $\Hil$ such that $\lbrace h_n \rbrace_{n=1}^{\infty} \sim_{T^d,\omega} \lbrace g_n \rbrace_{n=1}^{\infty}$. Then, by Step 2,
$$\sup_{n\in \mathbb{N}} \Vert C_{U^d} (g_n -h_n)\Vert_{\ell^{\infty}_{\omega}(X;\Hil)} \leq \Vert G_{U^d,T} \Vert_{\B(\ell^{\infty}_{\omega}(X;\Hil))} \sup_{n\in \mathbb{N}} \Vert C_{T^d} (g_n -h_n)\Vert_{\ell^{\infty}_{\omega}(X;\Hil)} < \infty .$$
Furthermore, since $\lbrace g_n - h_n \rbrace_{n=1}^{\infty} \sim_{T^d,\omega} \lbrace 0 \rbrace_{n=1}^{\infty}$, we have $[\lbrace g_n - h_n \rbrace_{n=1}^{\infty}]_{\sim_{T^d,\omega}} = [\lbrace 0 \rbrace_{n=1}^{\infty}]_{\sim_{T^d,\omega}} = 0\in \Hil^{\infty}_{\omega}(T^d,T)$, which, again by Step 2, yields
\begin{flalign}
\lim_{n\rightarrow \infty} \Vert U^d_k (g_n -h_n)\Vert \omega(k) &\leq \sup_{k\in X} \lim_{n\rightarrow \infty} \Vert U^d_k (g_n -h_n)\Vert \omega(k) \notag \\
&= \sup_{k\in X} \lim_{n\rightarrow \infty} \left\Vert \sum_{l\in X} U^d_k T_l^* T^d_l (g_n -h_n)\right\Vert \omega(k) \notag \\
&= \sup_{k\in X}  \left\Vert \sum_{l\in X} U^d_k T_l^* \lim_{n\rightarrow \infty} T^d_l (g_n -h_n)\right\Vert \omega(k) \notag \\
&= \Vert G_{U^d,T}C_{T^d} 0\Vert_{\ell^{\infty}_{\omega}(X;\Hil)} = 0 \qquad (\text{for each }k\in X), \notag
\end{flalign}
where the interchange of the limit and the sum is justified by the dominated convergence theorem as in the proof of Lemma \ref{hard}.
Thus 
$$\lim_{n\rightarrow \infty} \Vert U^d_k (g_n -h_n)\Vert =0 \qquad  (\forall k\in X),$$ and consequently, $\lbrace g_n \rbrace_{n=1}^{\infty} \sim_{U^d,\omega} \lbrace h_n \rbrace_{n=1}^{\infty}$. We conclude that $[\lbrace f_n \rbrace_{n=1}^{\infty}]_{\sim_{T^d,\omega}} \subseteq  [\lbrace f_n \rbrace_{n=1}^{\infty}]_{\sim_{U^d,\omega}}$ for all sequences $\lbrace f_n \rbrace_{n=1}^{\infty}$ in $\Hil$. The converse inclusion relation is shown analogously by switching the roles of $(T^d,T)$ and $(U^d,U)$. 

\noindent \emph{Step 4.} Let $f =[\lbrace f_n \rbrace_{n=1}^{\infty}]_{\sim_{T^d,\omega}}$ be arbitrary. Then $C_{T^d}f \in V(T^d,T)$ by Lemma \ref{hard}. By Step 3, we have that $[\lbrace f_n \rbrace_{n=1}^{\infty}]_{\sim_{T^d,\omega}}= [\lbrace f_n \rbrace_{n=1}^{\infty}]_{\sim_{U^d,\omega}} \in \Hil_{\omega}^{\infty}(T^d,T)$, and by Steps 1 and 2, we have that $C_{U^d} f = (\lim_{n\rightarrow \infty} U^d_k f_n)_{k\in X} = G_{U^d,T} C_{T^d}f \in V(U^d,U)$. Thus  
$$\Vert f \Vert_{\Hil_{\omega}^{\infty}(U^d,U)} = \Vert C_{U^d} f \Vert_{\ell_{\omega}^{\infty}(X;\Hil)} \leq \Vert G_{U^d,T} \Vert_{\B(\ell_{\omega}^{\infty}(X;\Hil))} \Vert C_{T^d} f \Vert_{\Hil_{\omega}^{\infty}(T^d,T)}.$$
This shows that $\Hil_{\omega}^{\infty}(T^d,T)$ is continuously embedded in $\Hil_{\omega}^{\infty}(U^d,U)$. Analogously, we obtain that $\Hil_{\omega}^{\infty}(U^d,U)$ is continuously embedded in $\Hil_{\omega}^{\infty}(T^d,T)$ after switching the roles of $(T^d,T)$ and $(U^d,U)$. This completes the proof.
%
%
\end{proof}

Via the above theorem, we can extend Theorem \ref{Banachgframeexpansion} as follows if $T$ and $T^d$ also are intrinsically localized. 

\begin{theorem}\label{reconstructioncorollary}
Let $\A$ be a spectral algebra and $\omega$ be an $(\A,p_0)$-admissible weight. Let $T=(T_k)_{k\in X}$ be an intrinsically $\A$-localized g-frame and assume that $T^d = (T^d_k)_{k\in X}$ is an intrinsically $\A$-localized dual g-frame of $T$ such that $T^d\sim_{\A} T$. Then, for each $p\in (p_0,\infty]\cup \lbrace 0,1 \rbrace$,
$$\Hil_{\omega}^p(T^d,T) = \Hil_{\omega}^p(T,T^d)$$
with equivalent norms and 
$$f = \sum_{k\in X} T_k^* T^d_k f = \sum_{k\in X} (T^d_k)^* T_k f \qquad (\forall f \in \Hil_{\omega}^p(T^d,T))$$
with unconditional convergence in $\Hil_{\omega}^p(T^d,T)$ for $p<\infty$ and unconditional convergence in the $\sigma(\Hil, \Hil^{00}(T^d))$-topology in the case $p=\infty$ respectively.
\end{theorem}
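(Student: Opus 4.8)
The plan is to combine the three results developed just before this theorem: Theorem \ref{duallocalized}, Theorem \ref{Banachgframeexpansion}, and Theorem \ref{equalityofspaces}. The key observation is that the hypotheses of Theorem \ref{equalityofspaces} are automatically met here once we take $U = T^d$ and $U^d = T$. Indeed, by assumption $T$ is intrinsically $\A$-localized, $T^d$ is intrinsically $\A$-localized, and $T^d \sim_{\A} T$ (equivalently $T \sim_{\A} T^d$ by symmetry of $\sim_{\A}$, Theorem \ref{equivalencerelation}). The only thing one must check is that $T$ really is a dual g-frame of $T^d$: this is immediate from the symmetric form of the duality relations (\ref{dualgframe}) (or (\ref{dualgframeoperatornotation})), since $\mathcal{I}_{\B(\Hil)} = D_{T^d} C_T = D_T C_{T^d}$ says precisely that $T$ is a dual of $T^d$ whenever $T^d$ is a dual of $T$.

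So the first step is to record that $(T^d, T)$ and $(T, T^d)$ are both dual g-frame pairs with each member intrinsically $\A$-localized and with the two members mutually $\A$-localized. The second step is to invoke Theorem \ref{equalityofspaces} with the substitution $(U^d, U) = (T, T^d)$: its hypotheses $T^d \sim_{\A} T$, $T \sim_{\A} T^d$ (i.e. "$U^d \sim_{\A} T$"), $T^d \sim_{\A} U = T^d$ (which is just intrinsic localization of $T^d$, i.e. "$T^d \sim_{\A} U$"), and $U^d \sim_{\A} U$ (i.e. $T \sim_{\A} T^d$, again) all hold, so we conclude $\Hil_{\omega}^p(T^d,T) = \Hil_{\omega}^p(T,T^d)$ with equivalent norms for every $p \in (p_0,\infty] \cup \lbrace 0,1 \rbrace$. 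The third step is to apply Theorem \ref{Banachgframeexpansion} twice: once to the dual pair $(T^d, T)$, giving $f = \sum_{k\in X} T_k^* T^d_k f$ for all $f \in \Hil_{\omega}^p(T^d,T)$; and once to the dual pair $(T, T^d)$, giving $f = \sum_{k\in X} (T^d_k)^* T_k f$ for all $f \in \Hil_{\omega}^p(T,T^d)$. Since the two spaces coincide (as sets, by Theorem \ref{equalityofspaces}), both expansions are valid for every $f$ in the common space, with the stated convergence modes (unconditional in $\Hil_{\omega}^p(T^d,T)$ for $p<\infty$, unconditional in the $\sigma(\Hil,\Hil^{00}(T^d))$-topology for $p=\infty$) inherited directly from Theorem \ref{Banachgframeexpansion}.

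There is essentially no obstacle here beyond bookkeeping: the theorem is a corollary assembled from machinery already in place, and the only subtlety worth a sentence is the symmetry of the dual relation and of $\sim_{\A}$, which lets us feed $(T, T^d)$ into Theorem \ref{equalityofspaces} in the slot previously occupied by an independent pair $(U^d, U)$. One might also remark that for $p = \infty$ the underlying equivalence relations $\sim_{T^d,\omega}$ and $\sim_{T,\omega}$ define the same equivalence classes (this is exactly Step 3 in the proof of Theorem \ref{equalityofspaces} applied with $U = T^d$), so the two series expansions of $f = [\lbrace f_n \rbrace]$ genuinely refer to the same element of the same Banach space.

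\begin{proof}
Since $T^d$ is a dual g-frame of $T$, the duality relation (\ref{dualgframeoperatornotation}) reads $\mathcal{I}_{\B(\Hil)} = D_T C_{T^d} = D_{T^d} C_T$, which is symmetric in $T$ and $T^d$; hence $T$ is also a dual g-frame of $T^d$. By Theorem \ref{equivalencerelation}, $\sim_{\A}$ is symmetric, so $T^d\sim_{\A} T$ implies $T\sim_{\A} T^d$. Applying Theorem \ref{equalityofspaces} with $(U^d,U) := (T,T^d)$ — whose hypotheses $T^d\sim_{\A} T$, $T\sim_{\A} T^d$, together with intrinsic $\A$-localization of $T$ and of $T^d$, are exactly what is assumed here — yields
$$\Hil_{\omega}^p(T^d,T) = \Hil_{\omega}^p(T,T^d)$$
with equivalent norms for every $p\in (p_0,\infty]\cup\lbrace 0,1 \rbrace$. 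In particular, for $p=\infty$ the two spaces consist of the same equivalence classes (Step 3 of the proof of Theorem \ref{equalityofspaces} with $U=T^d$).

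Now apply Theorem \ref{Banachgframeexpansion} to the dual g-frame pair $(T^d,T)$: for every $f\in \Hil_{\omega}^p(T^d,T)$,
$$f = \sum_{k\in X} T_k^* T^d_k f,$$
with unconditional convergence in $\Hil_{\omega}^p(T^d,T)$ when $p<\infty$, and unconditional convergence in the $\sigma(\Hil,\Hil^{00}(T^d))$-topology when $p=\infty$. Applying Theorem \ref{Banachgframeexpansion} once more, this time to the dual g-frame pair $(T,T^d)$, gives for every $f\in \Hil_{\omega}^p(T,T^d)$ that
$$f = \sum_{k\in X} (T^d_k)^* T_k f,$$
with the analogous convergence. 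Since $\Hil_{\omega}^p(T^d,T) = \Hil_{\omega}^p(T,T^d)$ by the first part, both identities hold for every $f$ in this common space, as claimed.
\end{proof}
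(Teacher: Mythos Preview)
Your proof is correct and follows precisely the paper's approach: the paper's proof is the single line ``The special case $(U^d,U)=(T,T^d)$ of Theorem \ref{equalityofspaces} combined with Theorem \ref{Banachgframeexpansion} yields the claim,'' and you have simply unpacked this by checking the hypotheses (symmetry of duality and of $\sim_{\A}$, and that $T^d\sim_{\A}U$ and $U^d\sim_{\A}T$ become the intrinsic localization assumptions on $T^d$ and $T$). Your remark about the $p=\infty$ case and the coincidence of the equivalence classes is a nice clarification that the paper leaves implicit.
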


\begin{proof}
The special case $(U^d,U)=(T,T^d)$ of Theorem \ref{equalityofspaces} combined with Theorem \ref{Banachgframeexpansion} yields the claim.
\end{proof}

Recall from Theorem \ref{duallocalized} that if $T$ is an intrinsically $\A$-localized g-frame, then $\widetilde{T} \sim_{\A} T$ and $\widetilde{T} \sim_{\A} \widetilde{T}$, where $\widetilde{T}$ denotes the canonical dual g-frame of $T$. Therefore, given any $(\A,p_0)$-admissible weight, the spaces $\Hil^p_{\omega}(\widetilde{T},T)$ ($p\in (p_0,\infty ]\cup \lbrace 0,1\rbrace$) are intrinsically defined and only depend on $T$. In particular, we may deduce the following result in this case. 

\begin{corollary}\label{canonicalduallocalized}
Let $\A$ be a spectral algebra and $\omega$ be an $(\A,p_0)$-admissible weight. If $T=(T_k)_{k\in X}$ is an intrinsically $\A$-localized g-frame, then for each $p\in (p_0,\infty]\cup \lbrace 0,1 \rbrace$,
$$\Hil_{\omega}^p(\widetilde{T},T) = \Hil_{\omega}^p(T,\widetilde{T})$$
with equivalent norms and 
\begin{equation}\label{dualgframeexploc}
    f = \sum_{k\in X} T^*_k T_k S_T^{-1} f = \sum_{k\in X} S_T^{-1} T^*_k T_k f \qquad (\forall f\in \Hil_{\omega}^p(\widetilde{T},T)),
\end{equation}
with unconditional convergence in $\Hil_{\omega}^p(\widetilde{T},T)$ for $p<\infty$ and unconditional convergence in the $\sigma(\Hil, \Hil^{00}(\widetilde{T}))$-topology in the case $p=\infty$ respectively.
\end{corollary}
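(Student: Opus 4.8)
The plan is to deduce this corollary directly from Theorem \ref{reconstructioncorollary} by taking $T^d$ to be the canonical dual g-frame $\widetilde{T} = (T_kS_T^{-1})_{k\in X}$, so that the only work is to check that the hypotheses of that theorem are satisfied and then to translate its conclusion into the notation of (\ref{dualgframeexploc}).

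First I would verify the hypotheses of Theorem \ref{reconstructioncorollary} with $T^d := \widetilde{T}$. Since $T$ is an intrinsically $\A$-localized g-frame by assumption, Theorem \ref{duallocalized}(i) shows that $\widetilde{T}$ is again intrinsically $\A$-localized, and Theorem \ref{duallocalized}(ii) shows that $\widetilde{T}\sim_{\A}T$. Moreover, the g-frame reconstruction formula (\ref{gframerec}) exhibits $\widetilde{T}$ as a dual g-frame of $T$ in the sense of (\ref{dualgframe}). Hence Theorem \ref{reconstructioncorollary} applies with $T^d=\widetilde{T}$ and yields, for each $p$ in the stated range, that $\Hil_{\omega}^p(\widetilde{T},T) = \Hil_{\omega}^p(T,\widetilde{T})$ with equivalent norms, together with the expansion $f = \sum_{k\in X} T_k^*\widetilde{T}_k f = \sum_{k\in X}\widetilde{T}_k^* T_k f$ valid for all $f\in\Hil_{\omega}^p(\widetilde{T},T)$, converging unconditionally in $\Hil_{\omega}^p(\widetilde{T},T)$ when $p<\infty$ and unconditionally in the $\sigma(\Hil,\Hil^{00}(\widetilde{T}))$-topology when $p=\infty$ (note $\Hil^{00}(T^d)=\Hil^{00}(\widetilde{T})$ here).

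It then remains to rewrite the two series in terms of $S_T$. From $\widetilde{T}_k = T_kS_T^{-1}$ one gets $T_k^*\widetilde{T}_k f = T_k^* T_k S_T^{-1} f$, and since the g-frame operator $S_T$, hence also $S_T^{-1}$, is self-adjoint, $\widetilde{T}_k^* = (T_kS_T^{-1})^* = S_T^{-1}T_k^*$, so that $\widetilde{T}_k^* T_k f = S_T^{-1}T_k^* T_k f$. Substituting these identities into the expansion above reproduces exactly (\ref{dualgframeexploc}), with the modes of convergence unchanged.

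I do not expect any genuine obstacle: all of the substance is carried by Theorems \ref{duallocalized} and \ref{reconstructioncorollary}, and the remaining argument is purely a matter of unwinding the definition of $\widetilde{T}$. The only mildly delicate point is the identity $\widetilde{T}_k^* = S_T^{-1}T_k^*$, which relies on the self-adjointness of $S_T^{-1}$ recorded in the preliminaries.
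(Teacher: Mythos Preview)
Your proposal is correct and follows essentially the same route as the paper: the paragraph preceding the corollary invokes Theorem \ref{duallocalized} to ensure $\widetilde{T}\sim_{\A}\widetilde{T}$ and $\widetilde{T}\sim_{\A}T$, after which the corollary is stated as an immediate consequence of Theorem \ref{reconstructioncorollary} with $T^d=\widetilde{T}$. Your explicit unwinding of $\widetilde{T}_k^* = S_T^{-1}T_k^*$ simply spells out the translation step that the paper leaves implicit.
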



\begin{remark}\label{Banachgframe}
In the language of \cite{Kho11}, Corollary \ref{canonicalduallocalized} says that every intrinsically localized g-frame is a Banach g-frame for $\Hil_{\omega}^p(\widetilde{T},T)$ for each $p\in [1,\infty] \cup \lbrace 0 \rbrace$.   
\end{remark}

We conclude this section with the following result on duality relations. Note that the spaces $\Hil^q_{1/\omega}(T^d,T)$ that occur below are well-defined, since, by Lemma \ref{1/wadmissible}, the weight $1/\omega$ is $(\A,1)$-admissible whenever $\omega$ is $(\A,1)$-admissible. 

\begin{theorem}\label{dualspaces}
Let $\A$ be a spectral algebra, $\omega$ be an $(\A,1)$-admissible weight, $T=(T_k)_{k\in X}$ be an intrinsically $\A$-localized g-frame and $T^d = (T^d_k)_{k\in X}$ be a dual g-frame of $T$ such that $T^d\sim_{\A} T$. Then
$$\big(\Hil^p_{\omega}(T^d,T)\big)^{*} \simeq \Hil^q_{1/\omega}(T,T^d),$$
for all $p\in [1,\infty)$ with $\frac{1}{p}+\frac{1}{q}=1$ and for $(p,q)=(0,1)$. In particular, 
$$\Hil^{\infty}_{\omega}(T^d,T) \simeq   \big(\Hil^0_{\omega}(T^d,T)\big)^{**}.$$
\end{theorem}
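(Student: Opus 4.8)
The plan is to realise each space $\Hil^p_\omega(T^d,T)$ as a \emph{complemented} subspace of the ambient Bochner sequence space, on which a mixed g-Gram matrix acts as a bounded idempotent, and then to run the elementary duality theory of complemented subspaces. The structural fact I would establish first is the following. Recall that $G_{T^d,T}=C_{T^d}D_T$ by definition and $D_TC_{T^d}=\mathcal I_{\B(\Hil)}$ by \eqref{dualgframeoperatornotation}, so $G_{T^d,T}$ is an idempotent $\B(\Hil)$-valued matrix and hence a bounded idempotent on $\ell^p_\omega(X;\Hil)$ for every $p$ with $1\le p<\infty$, by $(\A,1)$-admissibility of $\omega$. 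Since $C_{T^d}$ is an isometry $\Hil^p_\omega(T^d,T)\hookrightarrow\ell^p_\omega(X;\Hil)$ with dense domain-image $C_{T^d}(\Hil^{00}(T))=G_{T^d,T}(\ell^{00}(X;\Hil))$ (Corollary \ref{isometrycor} and Proposition \ref{Donto}), density of $\ell^{00}$ in $\ell^p_\omega$ forces the closed range of $G_{T^d,T}$ to coincide with $C_{T^d}\big(\Hil^p_\omega(T^d,T)\big)$; thus $\Hil^p_\omega(T^d,T)\cong\{g\in\ell^p_\omega(X;\Hil):g=G_{T^d,T}g\}$, complemented with complementary projection $\mathcal I-G_{T^d,T}$. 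The case $p=\infty$ is precisely Lemma \ref{hard}, with the ambient space shrunk to $\{g\in\ell^\infty_\omega(X;\Hil):g=G_{T^d,T}g\}$; and for $p=0$ the same argument works once one checks that $G_{T^d,T}$ maps $\ell^0_\omega(X;\Hil)$ into itself — this follows from solidity of $\A$ and the weighted Schur-type bound behind $(\A,1)$-admissibility (for fixed $l$, $\sum_k\|[G_{T^d,T}]_{k,l}\|\,\omega(k)/\omega(l)<\infty$, exactly as in the estimate marked $(\ast)$ in the proof of Lemma \ref{hard}, so $\|[G_{T^d,T}]_{k,l}\|\,\omega(k)\to0$ in $k$ and finitely supported inputs land in $\ell^0_\omega$).

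The same discussion applies verbatim to $\Hil^q_{1/\omega}(T,T^d)$: by Lemma \ref{1/wadmissible} the weight $1/\omega$ is again $(\A,1)$-admissible, by Theorem \ref{equivalencerelation} (symmetry of $\sim_\A$) we have $T\sim_\A T^d$, and $D_{T^d}C_T=\mathcal I_{\B(\Hil)}$ by \eqref{dualgframeoperatornotation}; hence $C_T$ identifies $\Hil^q_{1/\omega}(T,T^d)$ with the complemented subspace $\{g:g=G_{T,T^d}g\}$ of $\ell^q_{1/\omega}(X;\Hil)$ (of $\ell^\infty_{1/\omega}(X;\Hil)$, via Lemma \ref{hard}, when $q=\infty$). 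Now comes the duality step. I would use the standard fact that if $P$ is a bounded idempotent on a Banach space $Y$ with range $M$, then $\nu\mapsto\nu\circ P$ is an isomorphism of $M^*$ onto $\mathcal R(P')\subseteq Y^*$ (isometric when $\|P\|=1$), where $P'$ is the Banach-space adjoint. Applying this with $Y=\ell^p_\omega(X;\Hil)$ (or $\ell^0_\omega(X;\Hil)$ when $p=0$), and using $\big(\ell^p_\omega(X;\Hil)\big)^*\cong\ell^q_{1/\omega}(X;\Hil^*)$ for $1\le p<\infty$ and $\big(\ell^0_\omega(X;\Hil)\big)^*\cong\ell^1_{1/\omega}(X;\Hil^*)$ from \cite{YaSr14} (together with the Riesz identification $\Hil^*\cong\Hil$), I compute via \eqref{BSadjoint} that the Banach-space adjoint of $G_{T^d,T}=[T^d_kT_l^*]_{k,l}$ is $\big([T_l(T^d_k)^*]_{k,l}\big)^T=[T_k(T^d_l)^*]_{k,l}=G_{T,T^d}$ (the mixed g-Gram matrix of the interchanged pair). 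Hence $\mathcal R(P')=\{g\in\ell^q_{1/\omega}(X;\Hil):g=G_{T,T^d}g\}$, which by the preceding paragraph is isomorphic to $\Hil^q_{1/\omega}(T,T^d)$ (for $q=\infty$ this is Lemma \ref{hard} directly). Chaining the isomorphisms yields $\big(\Hil^p_\omega(T^d,T)\big)^*\simeq\Hil^q_{1/\omega}(T,T^d)$ for all $p\in[1,\infty)\cup\{0\}$.

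For the ``in particular'' statement I would iterate. The case $(p,q)=(0,1)$ gives $\big(\Hil^0_\omega(T^d,T)\big)^*\simeq\Hil^1_{1/\omega}(T,T^d)$. Applying the same complemented-subspace/duality argument to $\Hil^1_{1/\omega}(T,T^d)\cong\{g:g=G_{T,T^d}g\}\subseteq\ell^1_{1/\omega}(X;\Hil)$, its dual is isomorphic to the range of the Banach-space adjoint of $G_{T,T^d}$ acting on $\big(\ell^1_{1/\omega}(X;\Hil)\big)^*\cong\ell^\infty_\omega(X;\Hil^*)$, namely to $\{g\in\ell^\infty_\omega(X;\Hil):g=G_{T^d,T}g\}$, and by Lemma \ref{hard} this set is isomorphic to $\Hil^\infty_\omega(T^d,T)$. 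Composing the two isomorphisms gives $\big(\Hil^0_\omega(T^d,T)\big)^{**}\simeq\Hil^\infty_\omega(T^d,T)$. The part I expect to be most delicate is the endpoint bookkeeping: proving the complemented-subspace realisation at $p=0$ (the invariance of $\ell^0_\omega(X;\Hil)$ under $G_{T^d,T}$ and the extension of the transpose formula \eqref{BSadjoint} to the $\ell^0$/$\ell^1$ duality, which is where solidity and admissibility are genuinely used), matching the abstractly defined space $\Hil^\infty_\omega(T^d,T)$ of Definition \ref{Hwinftydef} with the concrete fixed-point set of $G_{T^d,T}$ (supplied by Lemma \ref{hard}), and keeping straight which mixed g-Gram matrix is the Banach-space adjoint of which so that the $p=0$/$q=1$ pairing closes up correctly.
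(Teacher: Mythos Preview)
Your approach is correct and genuinely different from the paper's. The paper proceeds by writing down the sesquilinear pairing $\beta(f,g)=\sum_{k}\langle T^d_k f,T_k g\rangle$, defining $\kappa:\Hil^q_{1/\omega}(T,T^d)\to(\Hil^p_\omega(T^d,T))^*$ by $\kappa(g)(f)=\beta(f,g)$, and then proving bijectivity of $\kappa$ by hand in three separate cases ($1<p<\infty$, $p=0$, $p=1$): surjectivity via the duality $(\ell^p_\omega)^*\cong\ell^q_{1/\omega}$ plus a density/Fubini argument, and injectivity by exhibiting an explicit left inverse $\Omega=D_{T^d}\circ J\circ R_{D_T}$. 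Your route instead realises each $\Hil^p_\omega(T^d,T)$ as the range of the bounded idempotent $G_{T^d,T}$ on $\ell^p_\omega(X;\Hil)$ and invokes the elementary fact that the dual of a complemented subspace is the range of the transposed projection; the computation $(G_{T^d,T})'=G_{T,T^d}$ via \eqref{BSadjoint} then closes the loop. This is cleaner and avoids the case-splitting and the rather involved surjectivity calculations the paper carries out; on the other hand, the paper's argument yields the concrete description of the duality bracket as $\beta$, which your abstract argument does not directly display (though it is of course recoverable). Two minor remarks: the reference to ``the estimate marked $(\ast)$'' is only an analogy --- what you actually need for $G_{T^d,T}(\ell^{00})\subset\ell^0_\omega$ is the \emph{column} bound $\sum_k\|[G_{T^d,T}]_{k,l}\|\omega(k)\le C\omega(l)$, which follows from $G_{T^d,T}\in\B(\ell^1_\omega(X;\Hil))$ rather than from the row estimate in Lemma~\ref{hard}; and the extension of \eqref{BSadjoint} to the $\ell^0_\omega/\ell^1_{1/\omega}$ pairing, which you correctly flag, is routine once one unwinds the duality of \cite{YaSr14}.
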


\begin{proof}
We separate three cases for $p$.

\noindent \emph{Case (a): $1<p<\infty$.} 

Observe that 
$$\beta:\Hil^p_{\omega}(T^d,T) \times \Hil^q_{1/\omega}(T,T^d) \longrightarrow \mathbb{C}, \qquad \beta(f,g) = \sum_{k\in X} \langle T^d_k f, T_k g \rangle$$ 
defines a bounded sesquilinear form. Indeed, by Hölder's inequality,
\begin{flalign}
\left\vert \beta(f,g) \right\vert &\leq \sum_{k\in X} \Vert T^d_k f \Vert \Vert T_k g \Vert \notag \\
&\leq \Vert C_{T^d} f \Vert_{\ell^p_{\omega}(X;\Hil)} \Vert C_{T} g \Vert_{\ell^q_{1/\omega}(X;\Hil)} \notag \\
&= \Vert f \Vert_{\Hil^p_{\omega}(T^d,T)} \Vert g \Vert_{\Hil^q_{1/\omega}(T,T^d)}. \notag
\end{flalign}
As a consequence, the map 
$$\kappa : \Hil^q_{1/\omega}(T,T^d) \longrightarrow (\Hil^p_{\omega}(T^d,T))^{*}, \quad g\mapsto \kappa(g),$$
where the action of $\kappa(g)$ on any $f\in \Hil^p_{\omega}(T^d,T)$ is given by $\kappa(g)(f) = \beta(f,g)$, defines a linear and bounded operator. We will prove that $\kappa$ is bijective.

In order to prove that $\kappa$ is surjective, let $\rho \in (\Hil^p_{\omega}(T^d,T))^{*}$ be arbitrary. Then $\rho \circ D_T \in (\ell^p_{\omega}(X;\Hil))^{*}$ due to Proposition \ref{Donto}. Since $(\ell^p_{\omega}(X;\Hil))^{*} \cong \ell^q_{1/\omega}(X;\Hil)$, there exists a unique $(g_k)_{k\in X} \in \ell^q_{1/\omega}(X;\Hil)$ with $\rho(D_T (h_k)_{k\in X}) = \sum_{k\in X} \langle h_k, g_k \rangle$ for all $(h_k)_{k\in X} \in \ell^p_{\omega}(X;\Hil)$ \cite[Chapter 1]{HyNeVeWe16}. In particular, $g:= D_{T^d} (g_k)_{k\in X} \in \Hil^q_{1/\omega}(T,T^d)$ due to Proposition \ref{Donto}. We will show that 
\begin{equation}\label{goal1}
\rho(f) = \sum_{k\in X} \langle T^d_k f, T_k g \rangle \qquad (\forall f\in \Hil^p_{\omega}(T^d,T)), 
\end{equation}
which readily implies that $\kappa$ is surjective. By density, it suffices to verify (\ref{goal1}) for $f\in \Hil^{00}(T)$. To this end, fix some arbitrary $f\in \Hil^{00}(T)$ and let $\varepsilon >0$. By density of $\ell^1_{1/\omega}(X;\Hil)$ in $\ell^q_{1/\omega}(X;\Hil)$, there exists some $(h^{\varepsilon}_k)_{k\in X} \in \ell^1_{1/\omega}(X;\Hil)$, such that $\Vert (g_k)_{k\in X} - (h^{\varepsilon}_k)_{k\in X}\Vert_{\ell^q_{1/\omega}(X;\Hil)}<\varepsilon$. Then, again by Proposition \ref{Donto}, $h^{\varepsilon}:= D_{T^d}(h^{\varepsilon}_k)_{k\in X} \in \Hil^1_{1/\omega}(T,T^d)$. Using Hölder's inequality, we may start estimating  
\begin{flalign}\label{dualA}
&\left\vert \rho(f) - \sum_{k\in X} \langle T^d_k f, T_k g \rangle \right\vert \notag \\\
&\leq \left\vert \rho(f) - \sum_{k\in X} \langle T^d_k f, T_k h^{\varepsilon} \rangle \right\vert + \left\vert \sum_{k\in X} \langle T^d_k f, T_k (g - h^{\varepsilon})\rangle \right\vert  \notag \\
&\leq \left\vert \rho(f) - \sum_{k\in X} \langle T^d_k f, T_k h^{\varepsilon} \rangle \right\vert + \sum_{k\in X} \Vert T^d_k f \Vert \omega(k) \Vert T_k (g - h^{\varepsilon})\Vert \omega(k)^{-1} \notag \\
&\leq \left\vert \rho(f) - \sum_{k\in X} \langle T^d_k f, T_k h^{\varepsilon} \rangle \right\vert + \Vert f \Vert_{\Hil^p_{\omega}(T^d,T)}  \Vert G_{T,T^d} ((g_k)_{k\in X} - (h^{\varepsilon}_k)_{k\in X})\Vert_{\ell^q_{1/\omega}(X;\Hil)} \notag \\
&< \left\vert \rho(f) - \sum_{k\in X} \langle T^d_k f, T_k h^{\varepsilon} \rangle \right\vert + \varepsilon \Vert f \Vert_{\Hil^p_{\omega}(T^d,T)}  \Vert G_{T,T^d} \Vert_{\B(\ell^q_{1/\omega}(X;\Hil))}.
\end{flalign}
Next, let $F_1 \subseteq F_2\subseteq \dots$ be some nested sequence of finite subsets of $X$ such that $\bigcup_{n=1}^{\infty} F_n = X$. Then we have for each $k\in X$ that 
\begin{flalign}\label{XY}
\frac{1}{\omega(k)}\left\Vert T_k h^{\varepsilon} - \sum_{l\in X} T_k (T^d_l)^* h^{\varepsilon}_l \right\Vert &= \lim_{n\rightarrow \infty} \frac{1}{\omega(k)} \left\Vert T_k h^{\varepsilon} - T_k \sum_{l\in F_n} (T^d_l)^* h^{\varepsilon}_l \right\Vert  \notag \\
&\leq \lim_{n\rightarrow \infty} \left\Vert C_T \left( h^{\varepsilon} - \sum_{l\in F_n} (T^d_l)^* h^{\varepsilon}_l \right) \right\Vert_{\ell^1_{1/\omega}(X;\Hil)} \notag \\
&= \lim_{n\rightarrow \infty} \left\Vert \sum_{l\in X} (T^d_l)^* h^{\varepsilon}_l - \sum_{l\in F_n} (T^d_l)^* h^{\varepsilon}_l \right\Vert_{\Hil^1_{1/\omega}(T,T^d)} = 0 \notag
\end{flalign}
by continuity of $D_{T^d}$ and unconditional convergence of $D_{T^d}(h^{\varepsilon}_l)_{l\in X}$ (see Proposition \ref{Donto}). Thus $T_k h^{\varepsilon} =\sum_{l\in X} T_k (T^d_l)^* h^{\varepsilon}_l$ in $\Hil$ for each $k\in X$. Consequently, we may rewrite the first term in (\ref{dualA}) to
\begin{equation}\label{dualB}
\left\vert \rho(f) - \sum_{k\in X} \langle T^d_k f, T_k h^{\varepsilon} \rangle \right\vert = \left\vert \rho(f) - \sum_{k\in X} \sum_{l\in X}\langle T^d_k f, T_k (T^d_l)^*h^{\varepsilon}_l \rangle \right\vert.    
\end{equation}
Now, we interchange the order of summation in (\ref{dualB}) via Fubini's theorem. This is possible, since $(h^{\varepsilon}_l)_{l\in X} \in \ell^1_{1/\omega}(X;\Hil)$; indeed, 
\begin{flalign}
\sum_{l\in X} \sum_{k\in X}\vert \langle T^d_k f, T_k (T^d_l)^*h^{\varepsilon}_l \rangle \vert &\leq  \sum_{l\in X} \sum_{k\in X}\Vert T^d_k f\Vert \omega(l) \Vert T_k (T^d_l)^* \Vert \Vert h^{\varepsilon}_l \Vert \omega(l)^{-1} \notag \\
&\leq \sum_{l\in X} \Vert h^{\varepsilon}_l \Vert \omega(l)^{-1} \sup_{m\in X}\sum_{k\in X} \Vert T^d_m T_k^*  \Vert \Vert T^d_k f\Vert \omega(m) \notag \\
&= \Vert (h^{\varepsilon}_l)_{l\in X} \Vert_{\ell^1_{1/\omega}(X;\Hil)} \Vert G \cdot (\Vert T^d_k f\Vert)_{k\in X} \Vert_{\ell^{\infty}_{\omega}(X)} \notag \\
&\leq \Vert (h^{\varepsilon}_l)_{l\in X} \Vert_{\ell^1_{1/\omega}(X;\Hil)} \Vert G \Vert_{\B(\ell^{\infty}_{\omega}(X))} \Vert C_{T^d} f\Vert_{\ell^{\infty}_{\omega}(X)} < \infty,  \notag
\end{flalign}
where we used that the scalar matrix $G = \big[\Vert T^d_k T_l^*\Vert \big]_{k,l\in X}$ is contained in $\B(\ell^{\infty}_{\omega}(X))$ due to Lemma \ref{vectortoscalar} and since $\omega$ is $(\A,1)$-admissible. Continuing the calculation (\ref{dualB}), we see again via Hölder's inequality that 
\begin{flalign}
\left\vert \rho(f) - \sum_{k\in X} \langle T^d_k f, T_k h^{\varepsilon} \rangle \right\vert &= \left\vert \rho(f) - \sum_{l\in X}\left\langle \sum_{k\in X} T_k ^* T^d_k f, (T^d_l)^*h^{\varepsilon}_l \right\rangle \right\vert \notag \\    
&= \left\vert \rho(f) - \sum_{l\in X}\langle f, (T^d_l)^*h^{\varepsilon}_l \rangle \right\vert \notag \\    
&= \left\vert \rho(f) - \sum_{l\in X}\langle T^d_l f, h^{\varepsilon}_l \rangle \right\vert \notag \\
&\leq \left\vert \rho(f) - \sum_{l\in X}\langle T^d_l f, g_l \rangle \right\vert + \left\vert \sum_{l\in X}\langle T^d_l f, g_l - h^{\varepsilon}_l \rangle \right\vert \notag \\
&< \left\vert \rho(f) - \rho(D_T C_{T^d}f) \right\vert + \varepsilon \Vert C_{T^d} f \Vert_{\Hil^p_{\omega}(T^d,T)} \notag \\
&= \varepsilon \Vert f \Vert_{\Hil^p_{\omega}(T^d,T)}, \notag
\end{flalign}
where we also applied Theorem \ref{reconstructioncorollary} twice. Combining the latter with (\ref{dualA}) we finally arrive at 
$$\left\vert \rho(f) - \sum_{k\in X} \langle T^d_k f, T_k g \rangle \right\vert < \varepsilon \Vert f \Vert_{\Hil^p_{\omega}(T^d,T)} \left(1+  \Vert G_{T,T^d} \Vert_{\B(\ell^q_{1/\omega}(X;\Hil))}\right).$$
Since $\varepsilon$ can be arbitrarily small, (\ref{goal1}) follows. 

To show that $\kappa$ is injective, it suffices to show the existence of a left-inverse of $\kappa$. Consider the map $R_{D_T}:(\Hil^p_{\omega}(T^d,T))^{*} \longrightarrow (\ell^p_{\omega}(X;\Hil))^{*}, \rho \mapsto \rho \circ D_T$. Then it is straightforward to verify that $R_{D_T}$ is linear and bounded. Next, let $J:(\ell^p_{\omega}(X;\Hil))^{*} \longrightarrow \ell^q_{1/\omega}(X;\Hil)$ denote the canonical isomorphism satisfying $\mu(\, \cdot \, ) = \sum_{k\in X}\langle [\, \cdot \, ]_k, [J(\mu)]_k \rangle$ for $\mu \in (\ell^p_{\omega}(X;\Hil))^{*}$ \cite[Chapter 1]{HyNeVeWe16}. Then Proposition \ref{Donto} guarantees that $\Omega := D_{T^d} \circ J \circ R_{D_T} : (\Hil^p_{\omega}(T^d,T))^{*} \longrightarrow \Hil^q_{1/\omega}(T,T^d)$ is linear and bounded and we claim that $\Omega$ is a left-inverse of $\kappa$. By boundedness of $\Omega$ and $\kappa$ it suffices to verify the identity $\Omega \kappa (g) = g$ on the dense subspace $\Hil^{00}(T^d)$ of $\Hil^q_{1/\omega}(T,T^d)$. To this end, fix some arbitrary $g\in \Hil^{00}(T^d)$. Then for any $f\in \Hil^{00}(T)$ we have due to g-frame reconstruction in $\Hil^{00}(T)\subseteq \Hil$ that 
\begin{flalign}
\langle f,g \rangle &= \sum_{k\in X} \langle T^d_k f, T_k g \rangle \notag \\
&= \kappa(g) (f) \notag \\ 
&= \kappa(g) (D_T C_{T^d}f) \notag \\
&= R_{D_T}\kappa(g)(C_{T^d}f) \notag \\
&= \sum_{k\in X} \langle T^d_k f, [J R_{D_T} \kappa(g)]_k \rangle =: (\ast).\notag
\end{flalign}
On the other hand we also have $\kappa(g) \in \Hil^{*}$, which implies $R_{D_T}\kappa(g) \in (\ell^2(X;\Hil))^{*}$ and thus $J R_{D_T} \kappa(g) \in \ell^2(X;\Hil)$. Therefore, we may write
$$(\ast) = \langle C_{T^d}f, J R_{D_T} \kappa(g) \rangle_{\ell^2(X;\Hil)} = \langle f, D_{T^d} J R_{D_T} \kappa(g) \rangle_{\Hil} = \langle f, \Omega \kappa (g) \rangle_{\Hil} .$$
In other words, we have shown that $\langle f, \Omega \kappa (g) - g \rangle = 0$ for all $f\in \Hil^{00}(T)$, which, by density of $\Hil^{00}(T)$ in $\Hil$, yields $g = \Omega \kappa g \in \Hil^{00}(T)$. Thus $\kappa$ is injective.

\noindent \emph{Case (b): $p=0$.}

In the case $(p,q) = (0,1)$ the proof is analogous to case (a); we only point out the following minor differences: The sesquilinear form $\beta$ on $\Hil^0_{\omega}(T^d,T) \times \Hil^1_{1/\omega}(T,T^d)$ is bounded by $\Vert f \Vert_{\Hil^{0}_{\omega}(T^d,T)} \Vert g \Vert_{\Hil^1_{1/\omega}(T,T^d)}$; $\rho \circ D_T \in (\ell^0_{\omega}(X;\Hil))^{*}$ for every $\rho \in \Hil^0_{\omega}(T^d,T)$ (again due to Proposition \ref{Donto}); and we use that $J:(\ell_{\omega}^{0}(X;\Hil))^* \longrightarrow \ell_{1/\omega}^1(X;\Hil)$ is an isomorphism induced by the sesquilinear form $\sum_{k\in X} \langle f_k, g_k\rangle$ on $\ell_{\omega}^{0}(X;\Hil) \times \ell_{1/\omega}^1(X;\Hil)$ \cite[Theorem 3.1]{YaSr14}. Note, that the surjectivity of $\kappa$ can be shown in a more straightforward manner, because in this case $(g_k)_{k\in X}$ is already contained in $\ell^1_{1/\omega}(X;\Hil)$ and no density argument via $h^{\varepsilon}$ is required to justify the change of order of summation. 

\noindent \emph{Case (c): $p=1$.}

In the case $p=1$ (i.e. $q=\infty$), the adaptations of case (a) to this setting are as follows: Recalling that $C_{T}u = (T_k u)_{k\in X} := (\lim_{n\rightarrow \infty}T_k u_n)_{k\in X} \in \ell^{\infty}_{1/\omega}(X;\Hil)$ whenever $u = [\lbrace u_n \rbrace_{n=1}^{\infty}]_{\sim_{T,1/\omega}} \in \Hil_{1/\omega}^{\infty}(T,T^d)$, we again see via Hölder's inequality that $\beta:\Hil^1_{\omega}(T^d,T) \times \Hil^{\infty}_{1/\omega}(T,T^d) \longrightarrow \mathbb{C}$, $\beta(f,g) = \sum_{k\in X} \langle T^d_k f, T_k g \rangle$ defines a bounded sesquilinear form. So, $\kappa$ can be defined again analogously as in case (a). 

We show that $\kappa$ is surjective. As in case (a), for given $\rho \in (\Hil^1_{\omega}(T^d,T))^*$, there exists a unique $(g_k)_{k\in X} \in \ell^{\infty}_{1/\omega}(X;\Hil)$ with $\rho(D_T (h_k)_{k\in X}) = \sum_{k\in X} \langle h_k, g_k \rangle$ for all $(h_k)_{k\in X} \in \ell^1_{\omega}(X;\Hil)$. We again set $g:= D_{T^d} (g_k)_{k\in X} \in \Hil^{\infty}_{1/\omega}(T,T^d)$ and calculate via Theorem \ref{reconstructioncorollary}
\begin{flalign}
\rho(f) &= \rho(D_T C_{T^d}f) \notag \\
&= \sum_{k\in X} \langle T^d_k f, g_k \rangle \notag \\
&= \sum_{k\in X} \lim_{n\rightarrow \infty} \left\langle T^d_k \sum_{l\in F_n} T_l^* T^d_l f, g_k \right\rangle \notag \\
&= \sum_{k\in X} \lim_{n\rightarrow \infty} \sum_{l\in F_n} \left\langle T^d_l f, T_l(T^d_k)^* g_k \right\rangle \notag \\
&= \sum_{k\in X} \sum_{l\in X}\left\langle T^d_l f, T_l (T^d_k)^* g_k \right\rangle \notag \\
&= \sum_{l\in X} \sum_{k\in X}\left\langle T^d_l f, T_l (T^d_k)^* g_k \right\rangle \notag \\
&= \sum_{l\in X} \lim_{n\rightarrow \infty} \left\langle T^d_l f, T_l \sum_{k\in F_n} (T^d_k)^* g_k .\right\rangle \notag \\
&= \sum_{l\in X} \left\langle T^d_l f, T_l g \right\rangle .\notag
\end{flalign}
This time, the order of summation may be switched due to 
\begin{flalign}
\sum_{l\in X} \sum_{k\in X}\vert \langle T^d_l f, T_l (T^d_k)^* g_k \rangle \vert &\leq  \sum_{l\in X} \sum_{k\in X}\Vert T^d_l f\Vert \omega(l) \Vert T_l (T^d_k)^* \Vert \Vert g_k \Vert \omega(l)^{-1} \notag \\
&\leq \sum_{l\in X} \Vert T^d_l f\Vert \omega(l) \sup_{m\in X}\sum_{k\in X} \Vert T_m(T^d_k)^*  \Vert \Vert g_k\Vert \omega(m)^{-1} \notag \\
&= \Vert f \Vert_{\Hil^1_{\omega}(T^d,T)} \left\Vert G' \cdot (\Vert g_l \Vert)_{l\in X} \right\Vert_{\ell^{\infty}_{1/\omega}(X)} \notag \\
&\leq \Vert f \Vert_{\Hil^1_{\omega}(T^d,T)} \left\Vert G' \right\Vert_{\B(\ell^{\infty}_{1/\omega}(X))} \Vert(g_l)_{l\in X}\Vert_{\ell^{\infty}_{1/\omega}(X;\Hil)} < \infty ,\notag
\end{flalign}
where we analogously used that $G' = 
\big[ \Vert T_k(T^d_l)^*\Vert \big]_{k,l\in X} \in \B(\ell^{\infty}_{1/\omega}(X))$.

Finally, we show the injectivity of $\kappa$ by proving the existence of a left-inverse $\Omega$ of $\kappa$. As in case (a), we consider the operator $R_{D_T} \in \B((\Hil^1_{\omega}(T^d,T))^{*}, (\ell^1_{\omega}(X;\Hil))^{*})$ and the canonical isomorphism $J:(\ell^1_{\omega}(X;\Hil))^{*} \longrightarrow \ell^{\infty}_{1/\omega}(X;\Hil)$. For arbitrary $f\in \Hil^{00}(T) \subseteq \Hil^1_{\omega}(T^d,T)$ and $g = [\lbrace g_n \rbrace_{n=1}^{\infty}]_{\sim_{T,1/\omega}} \in \Hil^{\infty}_{1/\omega}(T,T^d)$ we see that 
$$\kappa(g)(f) = \sum_{k\in X}\lim_{n\rightarrow \infty} \langle T^d_k f, T_k g_n \rangle = \lim_{n\rightarrow \infty} \sum_{k\in X}\langle T^d_k f, T_k g_n\rangle = \lim_{n\rightarrow \infty} \langle f, g_n\rangle ,$$
where we applied g-frame reconstruction in $\Hil$ in the third step and the dominated convergence theorem in the second step upon noticing that $C_{T^d}f \in \ell^1_{\omega}(X;\Hil)$ and $\Vert T_k g_n \Vert \omega(k)^{-1} \leq C$ due to (\ref{Cineq}). At the same time we have as above  
\begin{flalign}
\kappa(g) (f) &= \kappa(g) (D_T C_{T^d}f) \notag \\
&= R_{D_T}\kappa(g)(C_{T^d}f) \notag \\
&= \sum_{k\in X} \langle T^d_k f, [J R_{D_T} \kappa(g)]_k \rangle \notag \\
&= \lim_{n\rightarrow \infty} \left\langle f, \sum_{k\in F_n} (T^d_k)^* [J R_{D_T} \kappa(g)]_k \right\rangle \notag
\end{flalign}
with $J R_{D_T} \kappa(g)\in \ell^{\infty}_{1/\omega}(X;\Hil)$. The composition $\Omega := D_{T^d} \circ J \circ R_{D_T}$ is an element in $\B(\Hil^1_{\omega}(T^d,T))^{*}, \Hil^{\infty}_{1/\omega}(T,T^d))$ and we have 
\begin{flalign}
\Omega \kappa (g) &= \left[ \left\lbrace \sum_{k\in F_n} (T^d_k)^* [J R_{D_T} \kappa(g)]_k \right\rbrace_{n=1}^{\infty}\right]_{\sim_{T,1/\omega}} \notag \\
&= [\lbrace (\Omega \kappa (g))_n \rbrace_{n=1}^{\infty}]_{\sim_{T,1/\omega}} \in \Hil^{\infty}_{1/\omega}(T,T^d). \notag    
\end{flalign}
Therefore, we may conclude from the above that 
$$\lim_{n\rightarrow \infty} \langle f, g_n\rangle = \kappa(g)(f) = \lim_{n\rightarrow \infty} \langle f, (\Omega \kappa (g))_n \rangle \qquad  (\forall f\in \Hil^{00}(T)).$$
Specifying to elements of the form $f = T_k^* h \in \Hil^{00}(T)$ ($k\in X, h\in \Hil$) implies
\begin{equation}\label{dualityfinal}
    \lim_{n\rightarrow \infty} \langle h, T_k \big( g_n - (\Omega \kappa (g))_n\big) \rangle = 0\qquad  (\forall k\in X, \forall h\in \Hil).
\end{equation}
However, since $g - \Omega \kappa (g) \in \Hil^{\infty}_{1/\omega}(T,T^d)$, we know (see Definition \ref{Hwinftydef}) that for each $k\in X$ the limit $\lim_{n\rightarrow \infty} T_k \big( g_n - (\Omega \kappa (g))_n\big) = T_k (g - \Omega \kappa (g)) \in \Hil$ exists and has to be zero as a consequence of (\ref{dualityfinal}). In particular, we have shown that $\lim_{n\rightarrow \infty} \Vert T_k \big( g_n - (\Omega \kappa (g))_n\big) \Vert = 0$ for each $k\in X$. Since $g - \Omega \kappa (g) \in \Hil^{\infty}_{1/\omega}(T,T^d)$ we also have $\sup_{n\in \mathbb{N}} \Vert C_{T} \big( g_n - (\Omega \kappa (g))_n\big) \Vert_{\ell^{\infty}_{1/\omega}(X;\Hil)} < \infty$. Thus $\lbrace g_n \rbrace_{n=1}^{\infty} \sim_{T,1/\omega}  \lbrace (\Omega \kappa (g))_n \rbrace_{n=1}^{\infty}$, i.e. $g = \Omega \kappa (g)$ in $\Hil^{\infty}_{1/\omega}(T,T^d)$. This completes the proof.
\end{proof}

\section{Application to Gabor g-frames}

We now turn to the Hilbert space $\Hil = L^2(\Rd)$ and consider Gabor g-frames, which were introduced in \cite{skret2020}.

We recall some basic notions from time-frequency analysis \cite{gr01}. The fundamental operators in time-frequency analysis are the \emph{translation operator} $T_x$ and the \emph{modulation operator} $M_{\omega}$ given by
$$T_x f(t) = f(t-x) \quad \text{and} \quad M_{\omega}f(t) = e^{2\pi i \omega \cdot t}f(t) \qquad (x,\omega \in \Rd).$$
Their composition $\pi(z) = M_{\omega} T_x$, where $z=(x,w) \in \mathbb{R}^{2d}$, is called a \emph{time-frequency shift} by $z$. Each of the operators $T_x$, $M_{\omega}$ and $\pi(z)$ is unitary on $L^2(\Rd)$. If $g$ is some window function, then the \emph{short-time Fourier transform} (STFT) of a function $f$ with respect to the window $g$ is given by  
$$V_g f (x,w) = \int_{\Rd} f(t) \overline{g(t-x)} e^{-2\pi i \omega \cdot t} \, dt \qquad (x,w\in \Rd).$$
Setting $z=(x,w)\in \mathbb{R}^{2d}$, we may rewrite the above to
$$V_g f (z) = \langle f, \pi(z) g \rangle \qquad (z\in \mathbb{R}^{2d})$$
whenever the bracket is well-defined. An important class of function spaces in time-frequency analysis are the \emph{modulation spaces} defined as follows. 
Let $g_0(t) = 2^{\frac{d}{4}}e^{-\pi t^2}$ denote the normalized Gaussian in $L^2(\mathbb{R}^d)$ (where $t^2 = t\cdot t$) and let $\nu_s(x) = (1+\vert x \vert)^s$ be the standard polynomial weight. Then $M^1_{\nu_s}(\Rd)$ is the Banach space of all $f \in L^2(\mathbb{R}^d)$ such that
\begin{equation}\label{M1vdef}
    \Vert \varphi \Vert_{M^1_{\nu_s}(\Rd)} =  \int_{\mathbb{R}^{2d}} \vert V_{g_0} f(z) \vert \nu_s(z) \, dz < \infty .
\end{equation}
The space $S_0 = M^1_{\nu_0}$ is known as \emph{Feichtinger's algebra} \cite{fei81}. For a $\nu_s$-moderate weight $m$ and $p\in [1,\infty]$ the \emph{modulation space} $M^p_m(\Rd)$ is the space of all elements $f$ in the anti-linear dual space $(M^1_{\nu_s}(\Rd))'$ for which 
\begin{equation}\label{Mpnorm}
    \Vert \varphi \Vert_{M^p_m(\Rd)} = \left( \int_{\mathbb{R}^{2d}} \vert V_{g_0}f(z) \vert^p m(z)^p \, dz \right)^{\frac{1}{p}} < \infty .
\end{equation}

We collect some important properties of modulation spaces we will need.

\begin{proposition}\label{Mpprop}\cite{gr01}
    Let $m$ be a $\nu_s$-moderate weight and $p\in [1,\infty]$.
\begin{itemize}
    \item[(a)] $M^p_m(\mathbb{R}^d)$ is a Banach space with respect to the norm defined in (\ref{Mpnorm}).
    \item[(b)] If $1 \leq p_1 \leq p_2 \leq \infty$ and $m_2(z) \leq Cm_1(z)$ for some $C>0$, then $M^{p_1}_{m_1}(\mathbb{R}^d)$ is continuously and densely embedded into $M^{p_2}_{m_2}(\mathbb{R}^d)$.
    \item[(c)] $M^2_1(\mathbb{R}^d) = L^2(\mathbb{R}^d)$ with equivalent norms.
\end{itemize}
\end{proposition}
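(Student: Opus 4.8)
All three assertions are standard facts from time–frequency analysis, and the plan is to deduce them from mapping properties of the short-time Fourier transform $V_{g_0}$ that are available in \cite{gr01} prior to this statement. I would rely on four ingredients: (i) the orthogonality relations $\Vert V_{g_0}f\Vert_{L^2(\mathbb{R}^{2d})}=\Vert g_0\Vert_2\,\Vert f\Vert_2$; (ii) the pointwise form of the reproducing identity,
$$\vert V_{g_0}f(z)\vert\ \leq\ \Vert g_0\Vert_2^{-2}\,\bigl(\vert V_{g_0}f\vert\ast\vert V_{g_0}g_0\vert\bigr)(z)\qquad(z\in\mathbb{R}^{2d});$$
(iii) the fact that $V_{g_0}g_0$ is, up to a unimodular factor, a Gaussian on $\mathbb{R}^{2d}$, so that $V_{g_0}g_0\cdot\nu_t\in L^r(\mathbb{R}^{2d})$ for every $t\geq 0$ and $r\in[1,\infty]$; and (iv) Young's inequality, which for a $\nu_s$-moderate weight $m$ and $1+\tfrac1{p_2}=\tfrac1{p_1}+\tfrac1r$ gives $\Vert F\ast H\Vert_{L^{p_2}_m(\mathbb{R}^{2d})}\leq C\,\Vert F\Vert_{L^{p_1}_m(\mathbb{R}^{2d})}\,\Vert H\cdot\nu_s\Vert_{L^r(\mathbb{R}^{2d})}$. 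I would also record that $\pi(z)g_0\in M^1_{\nu_s}(\mathbb{R}^d)$ for every $z$, so that $V_{g_0}f(z)=\langle f,\pi(z)g_0\rangle$ makes sense for $f\in(M^1_{\nu_s}(\mathbb{R}^d))'$ and the definitions (\ref{Mpnorm}) are meaningful.

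Assertion (c) is then immediate: $g_0$ is $L^2$-normalized, so $\Vert g_0\Vert_2=1$, and (i) gives $\Vert f\Vert_{M^2_1(\mathbb{R}^d)}=\Vert V_{g_0}f\Vert_{L^2(\mathbb{R}^{2d})}=\Vert f\Vert_2$; hence the spaces coincide, even isometrically. For (a), note that by construction $V_{g_0}\colon M^p_m(\mathbb{R}^d)\to L^p_m(\mathbb{R}^{2d})$ is a linear isometry, and it is injective since the STFT is invertible; it therefore suffices to show that its range is a closed subspace of the Banach space $L^p_m(\mathbb{R}^{2d})$. The reproducing formula $V_{g_0}f=\Vert g_0\Vert_2^{-2}\,(V_{g_0}f)\,\natural\,(V_{g_0}g_0)$ identifies this range with $\ker(\mathrm{Id}-P)$, where $P$ denotes twisted convolution against $\Vert g_0\Vert_2^{-2}V_{g_0}g_0$; since $P$ is bounded on $L^p_m(\mathbb{R}^{2d})$ by (iv) (using $\vert F\,\natural\,H\vert\leq\vert F\vert\ast\vert H\vert$ pointwise and $V_{g_0}g_0\cdot\nu_s\in L^1$), the set $\ker(\mathrm{Id}-P)$ is closed and $M^p_m(\mathbb{R}^d)$ is complete. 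The remaining (quasi-)norm axioms are immediate, and the space is nontrivial because $V_{g_0}\varphi\in\mathcal{S}(\mathbb{R}^{2d})$ for $\varphi\in\mathcal{S}(\mathbb{R}^d)$, so $\mathcal{S}(\mathbb{R}^d)\subseteq M^p_m(\mathbb{R}^d)$.

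For (b), fix $1\leq p_1\leq p_2\leq\infty$ and $m_2\leq Cm_1$. For $f\in M^{p_1}_{m_1}(\mathbb{R}^d)$, multiplying (ii) by $m_1$ and using that $m_1$ is $\nu_s$-moderate yields the pointwise bound $\vert V_{g_0}f\vert\,m_1\leq C'\bigl((\vert V_{g_0}f\vert\,m_1)\ast(\vert V_{g_0}g_0\vert\,\nu_s)\bigr)$ on $\mathbb{R}^{2d}$; choosing $r$ with $\tfrac1r=1-\tfrac1{p_1}+\tfrac1{p_2}$, which lies in $[0,1]$ exactly because $p_1\leq p_2$, and invoking Young's inequality together with (iii) gives $\Vert V_{g_0}f\Vert_{L^{p_2}_{m_1}(\mathbb{R}^{2d})}\leq C''\,\Vert V_{g_0}f\Vert_{L^{p_1}_{m_1}(\mathbb{R}^{2d})}$. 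Combined with $m_2\leq Cm_1$ this gives $\Vert f\Vert_{M^{p_2}_{m_2}(\mathbb{R}^d)}\leq\widetilde C\,\Vert f\Vert_{M^{p_1}_{m_1}(\mathbb{R}^d)}$, so the inclusion $M^{p_1}_{m_1}(\mathbb{R}^d)\hookrightarrow M^{p_2}_{m_2}(\mathbb{R}^d)$ is well defined and continuous. For the density, when $p_2<\infty$ I would show that $\mathcal{S}(\mathbb{R}^d)\subseteq M^{p_1}_{m_1}(\mathbb{R}^d)$ is already dense in $M^{p_2}_{m_2}(\mathbb{R}^d)$: approximate $V_{g_0}f$ in $L^{p_2}_{m_2}(\mathbb{R}^{2d})$ by compactly supported and then mollified functions, and transport the approximants back to $\mathbb{R}^d$ through the bounded adjoint STFT $V_{g_0}^{\ast}$, the smoothness and decay of $g_0$ making the results Schwartz; for $p_2=\infty$ the density is to be read in the weak-$\ast$ sense (the norm closure of $\mathcal{S}(\mathbb{R}^d)$ being the ``small'' modulation space), exactly as in \cite{gr01}.

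The main obstacle is the convolution machinery underlying (a) and (b): on the infinite-measure space $\mathbb{R}^{2d}$ the range of $V_{g_0}$ is not automatically closed, and $L^{p_1}\subseteq L^{p_2}$ fails in general, so it is precisely the restriction to STFT images — via the reproducing identity, the Gaussian decay of $V_{g_0}g_0$, and Young's inequality for $\nu_s$-moderate weights — that makes both statements work; checking those weighted convolution estimates is the one genuinely technical step, everything else being bookkeeping. The $p_2=\infty$ density is the only further subtlety, and it is resolved by passing to the appropriate (weak-$\ast$) topology.
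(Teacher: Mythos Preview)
The paper does not give its own proof of this proposition: it is quoted verbatim as a background result with a citation to \cite{gr01}, and no argument is supplied. Your sketch is correct and is precisely the standard route taken in that reference --- Moyal's identity for (c), closedness of the STFT range via the reproducing formula and weighted Young to get completeness in (a), and the same convolution estimate against the Gaussian $V_{g_0}g_0$ for the embedding in (b). Your handling of the $p_2=\infty$ density (weak-$\ast$ rather than norm) is also the right caveat; in fact the paper only ever uses the continuous embeddings $M^1_{\nu_s}\hookrightarrow L^2$ and $M^1_{\nu_s}\hookrightarrow M^\infty_{\nu_s}$, so the density portion of (b) is not actually invoked downstream.
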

Returning to the g-frame setting, we are interested in families consisting of shifts of some \emph{window operator} $T\in \mathcal{B}(L^2(\mathbb{R}^d))$ in the time-frequency plane. More precisely, the \emph{translation of the operator} $T$ by $z\in \mathbb{R}^{2d}$ \cite{koz92} is given by 
$$\alpha_z (T) := (\pi(z) \otimes \pi(z))(T) = \pi(z) T \pi(z)^*.$$
For a countable set $X \subseteq \mathbb{R}^{2d}$ and a window operator $T\in \mathcal{B}(L^2(\mathbb{R}^d))$ we call the family
$$\G = \G(T,X) = (\alpha_k (T))_{k\in X}$$
a \emph{g-Gabor system}. In the terminology of \cite{skret2020}, such a g-Gabor system $\G(T,X)$ is called a \emph{Gabor g-frame}, if it is a g-frame for $L^2(\mathbb{R}^d)$, i.e. if there exist positive constants $0<A\leq B<\infty$ such that 
\begin{equation}\label{Gaborgframe}
    A \Vert f \Vert^2 \leq \sum_{k \in X} \Vert \alpha_{k} (T) f \Vert^2 \leq B \Vert f \Vert^2 \qquad (\forall f\in L^2(\mathbb{R}^d)).
\end{equation}
In \cite{skret2020} \emph{regular} Gabor g-frames (which correspond to $X=\Lambda \subset \mathbb{R}^{2d}$ being a full-rank lattice) were considered. In particular, via Fourier methods for periodic operators, which ultimately rely on the group structure of the lattice $k$, Skrettingland proved in \cite{skret2020} several results for this class of g-frames, which typically are concluded from suitable localization properties of a given frame in some abstract Hilbert space $\Hil$ \cite{forngroech1}. We will extend some of those results to the case of \emph{irregular} Gabor g-frames (i.e. Gabor g-frames indexed by some arbitrary relatively separated set $X \subset \mathbb{R}^{2d}$) by showing that g-Gabor systems with respect to some suitably nice window operator $T$ are polynomially localized (in the sense of Example \ref{spectralexamples} (1.)).

To this end, we consider the class $\mathcal{B}_{\nu_s \otimes \nu_s}$ of integral operators acting boundedly on $L^2(\mathbb{R}^d)$, whose integral kernel belongs to $M^1_{\nu_s \otimes \nu_s}(\mathbb{R}^{2d})$, where 
\begin{flalign}
    M^1_{\nu_s \otimes \nu_s}(\mathbb{R}^{2d}) &\cong  M^1_{\nu_s}(\mathbb{R}^d) \hat{\otimes} M^1_{\nu_s}(\mathbb{R}^d) \notag \\
    &= \left\lbrace \sum_{n\in \mathbb{N}} \varphi_n \otimes \psi_n : \sum_{n\in \mathbb{N}} \Vert \varphi_n \Vert_{M^1_{\nu_s}(\mathbb{R}^d)} \Vert \psi_n \Vert_{M^1_{\nu_s}(\mathbb{R}^d)} < \infty \right\rbrace \end{flalign}
is the projective tensor product of the Banach space $M^1_{\nu_s}(\mathbb{R}^d)$ with itself.
Here $g\otimes f$ denotes the rank-one operator given by $(g\otimes f)(h) = \langle h, f\rangle g$, where $f,g,h\in L^2(\mathbb{R}^d)$.
The space $\mathcal{B}_{\nu_s \otimes \nu_s}$ was studied in \cite{skret2020} in detail. We collect the following properties, which we will need.

\begin{proposition}\label{beautifuloperatorsprop}
\cite{skret2020} Let $T\in \mathcal{B}_{\nu_s \otimes \nu_s}$. Then the following hold:
\begin{itemize}
    \item[(a)] There exist sequences $\lbrace \varphi_n\rbrace_{n\in \mathbb{N}}$ and $\lbrace \psi_n\rbrace_{n\in \mathbb{N}}$ in $M^1_{\nu_s}(\mathbb{R}^d)$ with 
    $$\sum_{n\in \mathbb{N}} \Vert \varphi_n \Vert_{M^1_{\nu_s}(\mathbb{R}^d)} \Vert \psi_n \Vert_{M^1_{\nu_s}(\mathbb{R}^d)} < \infty,$$
    such that $T$ can be written as a sum of rank-one operators
    \begin{equation}\label{rankonesum}
        T = \sum_{n\in \mathbb{N}} \varphi_n \otimes \psi_n.
    \end{equation}
    The decomposition (\ref{rankonesum}) converges absolutely in $\mathcal{B}_{\nu_s \otimes \nu_s}$, in the space of trace class operators on $L^2(\mathbb{R}^d)$, and in $\mathcal{B}(L^2(\mathbb{R}^d))$.
    \item[(b)] Let $T^* \in \mathcal{B}(L^2(\mathbb{R}^d))$ denote the adjoint operator of $T$. Then $T^* \in \mathcal{B}_{\nu_s \otimes \nu_s}$. In particular, if $T = \sum_{n\in \mathbb{N}} \varphi_n \otimes \psi_n$, then $T^* = \sum_{n\in \mathbb{N}} \psi_n \otimes \varphi_n$.
\end{itemize}
\end{proposition}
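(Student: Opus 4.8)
The plan is to deduce both parts from the identification $M^1_{\nu_s\otimes\nu_s}(\mathbb{R}^{2d})\cong M^1_{\nu_s}(\mathbb{R}^d)\,\hat{\otimes}\,M^1_{\nu_s}(\mathbb{R}^d)$ recalled above, together with the continuity of the natural inclusion $M^1_{\nu_s}(\mathbb{R}^d)\hookrightarrow L^2(\mathbb{R}^d)$. For \emph{part (a)}, I start from the standard fact that, by definition of the (completed) projective tensor product, every element of $M^1_{\nu_s}(\mathbb{R}^d)\,\hat{\otimes}\,M^1_{\nu_s}(\mathbb{R}^d)$ admits a representation $\sum_{n\in\mathbb{N}}\varphi_n\otimes\psi_n$ with $\sum_{n\in\mathbb{N}}\Vert\varphi_n\Vert_{M^1_{\nu_s}(\mathbb{R}^d)}\Vert\psi_n\Vert_{M^1_{\nu_s}(\mathbb{R}^d)}<\infty$ (indeed the projective norm is the infimum of such sums, so one may pick a representation realizing it up to any prescribed $\varepsilon$). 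Applying this to the integral kernel $\kappa_T\in M^1_{\nu_s\otimes\nu_s}(\mathbb{R}^{2d})$ of $T$, one gets such sequences $\{\varphi_n\},\{\psi_n\}$; then one observes that the integral operator with kernel $\varphi(x)\overline{\psi(y)}$ is precisely the rank-one operator $h\mapsto\langle h,\psi\rangle\varphi$, i.e. $\varphi\otimes\psi$ in the notation of the paper. Since the assignment ``kernel $\mapsto$ operator'' is, by definition of $\mathcal{B}_{\nu_s\otimes\nu_s}$, a norm-isomorphism onto its range, the identity $\kappa_T=\sum_n\varphi_n\otimes\psi_n$ transfers to $T=\sum_n\varphi_n\otimes\psi_n$ with absolute convergence in $\mathcal{B}_{\nu_s\otimes\nu_s}$, using $\Vert\varphi_n\otimes\psi_n\Vert_{\mathcal{B}_{\nu_s\otimes\nu_s}}\le\Vert\varphi_n\Vert_{M^1_{\nu_s}(\mathbb{R}^d)}\Vert\psi_n\Vert_{M^1_{\nu_s}(\mathbb{R}^d)}$.

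For the two remaining modes of convergence I invoke Proposition~\ref{Mpprop}(b),(c): since $\nu_s\geq 1$, the space $M^1_{\nu_s}(\mathbb{R}^d)$ embeds continuously into $M^2_1(\mathbb{R}^d)=L^2(\mathbb{R}^d)$, say with constant $C$. A rank-one operator $\varphi\otimes\psi$ has trace norm and operator norm both equal to $\Vert\varphi\Vert_{L^2}\Vert\psi\Vert_{L^2}$, hence
$$\sum_{n}\Vert\varphi_n\otimes\psi_n\Vert_{\mathrm{tr}}=\sum_n\Vert\varphi_n\Vert_{L^2}\Vert\psi_n\Vert_{L^2}\le C^2\sum_n\Vert\varphi_n\Vert_{M^1_{\nu_s}(\mathbb{R}^d)}\Vert\psi_n\Vert_{M^1_{\nu_s}(\mathbb{R}^d)}<\infty .$$
Thus (\ref{rankonesum}) converges absolutely in the trace class and, a fortiori (since $\Vert\cdot\Vert_{\mathcal{B}(L^2(\mathbb{R}^d))}\le\Vert\cdot\Vert_{\mathrm{tr}}$), in $\mathcal{B}(L^2(\mathbb{R}^d))$; and as convergence in $\mathcal{B}_{\nu_s\otimes\nu_s}$ and in trace class both imply convergence in operator norm, where limits are unique, the three limits all equal $T$.

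For \emph{part (b)}, a one-line computation gives $(\varphi\otimes\psi)^*=\psi\otimes\varphi$. Since $X\mapsto X^*$ is an isometry of $\mathcal{B}(L^2(\mathbb{R}^d))$ and (\ref{rankonesum}) converges there, the adjoint commutes with the sum, so $T^*=\sum_n\psi_n\otimes\varphi_n$ in $\mathcal{B}(L^2(\mathbb{R}^d))$. Its integral kernel is $\sum_n\psi_n\otimes\varphi_n$, which lies in $M^1_{\nu_s}(\mathbb{R}^d)\,\hat{\otimes}\,M^1_{\nu_s}(\mathbb{R}^d)\cong M^1_{\nu_s\otimes\nu_s}(\mathbb{R}^{2d})$ by exactly the estimate of part (a) with the roles of $\varphi_n$ and $\psi_n$ interchanged; as $T^*$ is moreover bounded on $L^2(\mathbb{R}^d)$, this gives $T^*\in\mathcal{B}_{\nu_s\otimes\nu_s}$. (Alternatively, one may argue directly that the kernel of $T^*$ is $(x,y)\mapsto\overline{\kappa_T(y,x)}$ and that coordinate-flip together with complex conjugation act boundedly on $M^1_{\nu_s\otimes\nu_s}(\mathbb{R}^{2d})$, since the weight $\nu_s\otimes\nu_s$ is symmetric under the flip; this yields the membership without passing through the rank-one expansion.)

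The only ingredient here that is not pure bookkeeping is the kernel-theorem isomorphism $M^1_{\nu_s\otimes\nu_s}(\mathbb{R}^{2d})\cong M^1_{\nu_s}(\mathbb{R}^d)\,\hat{\otimes}\,M^1_{\nu_s}(\mathbb{R}^d)$ and its compatibility with the passage from kernels to integral operators; but this is recorded in the text preceding the statement, so I would take it as given. Accordingly I expect the main obstacle to be quite mild and essentially a matter of care: keeping track of which topology each partial-sum sequence converges in, and justifying the interchange of the adjoint with the infinite sum in part (b).
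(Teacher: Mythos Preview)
The paper does not supply its own proof of this proposition; it is quoted verbatim from \cite{skret2020} and stated without argument, so there is nothing to compare your proposal against. Your sketch is correct and is essentially the standard derivation: part~(a) is immediate from the defining property of the projective tensor product together with the continuous embedding $M^1_{\nu_s}(\mathbb{R}^d)\hookrightarrow L^2(\mathbb{R}^d)$ (Proposition~\ref{Mpprop}), and part~(b) from continuity of the adjoint on $\mathcal{B}(L^2(\mathbb{R}^d))$; the only cosmetic point is that the symbol $\otimes$ is overloaded (elementary tensor in $M^1_{\nu_s}\,\hat\otimes\,M^1_{\nu_s}$ versus the rank-one operator $h\mapsto\langle h,\psi\rangle\varphi$), which hides a complex conjugation on the second factor, but since $M^1_{\nu_s}$ is closed under conjugation this does not affect the estimates.
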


We are now ready to prove the main theorem of this section, which provides a concrete example of an intrinsically localized operator-valued sequence.

\begin{theorem}\label{polynomiallylocalizedgframe}
Let $X\subset \mathbb{R}^{2d}$ be relatively separated, $s>2d$, and $T\in \mathcal{B}_{\nu_s \otimes \nu_s}$. Then the g-Gabor system $\G=\G(T,X)$ is intrinsically $\J_s$-localized. In particular, $\G(T,X)$ is a g-Bessel sequence for $L^2(\Rd)$.  
\end{theorem}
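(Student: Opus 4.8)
The plan is to check the two conditions implicit in Definition~\ref{defloc}: that $\J_s = \J_s(X)$ is a spectral algebra, and that the $g$-Gram matrix $G_{\G} = [\alpha_k(T)\alpha_l(T)^*]_{k,l\in X}$ lies in it. The first is handled by quoting Example~\ref{spectralexamples}~(1) with the ambient dimension taken to be $2d$ rather than $d$; this is exactly why one needs $s>2d$ and why $X\subset\mathbb{R}^{2d}$ must be relatively separated. For the second I need a constant $C>0$, depending only on $T$, $s$, $d$, with $\|\alpha_k(T)\alpha_l(T)^*\| \le C(1+|k-l|)^{-s}$ for all $k,l\in X$, which is precisely the statement $G_{\G}\in\J_s$. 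The ``in particular'' clause then follows at once from Proposition~\ref{locgBessel}, since $\J_s\subseteq\B(\ell^2(X;L^2(\Rd)))$.

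The first step will be to reduce the estimate to a statement about the single operator $T$. Since $\alpha_z$ is a $*$-homomorphism we have $\alpha_l(T)^* = \alpha_l(T^*)$, and since the time-frequency shifts satisfy $\pi(k)^*\pi(l) = c(k,l)\,\pi(l-k)$ for a unimodular scalar $c(k,l)$, the unitarity of $\pi(k)$ and $\pi(l)$ gives
$$\|\alpha_k(T)\alpha_l(T)^*\| = \|\pi(k)T\pi(k)^*\pi(l)T^*\pi(l)^*\| = \|T\pi(l-k)T^*\|.$$
So it suffices to prove $\|T\pi(z)T^*\| \le C(1+|z|)^{-s}$ for every $z\in\mathbb{R}^{2d}$.

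Next I would use the rank-one decomposition from Proposition~\ref{beautifuloperatorsprop}: write $T = \sum_n \varphi_n\otimes\psi_n$ and $T^* = \sum_n \psi_n\otimes\varphi_n$ with $\sum_n\|\varphi_n\|_{M^1_{\nu_s}}\|\psi_n\|_{M^1_{\nu_s}} < \infty$, both series absolutely convergent in $\B(L^2(\Rd))$. Using the elementary identity $(\varphi_n\otimes\psi_n)\pi(z)(\psi_m\otimes\varphi_m) = \langle\pi(z)\psi_m,\psi_n\rangle\,(\varphi_n\otimes\varphi_m)$, the norm formula $\|\varphi_n\otimes\varphi_m\| = \|\varphi_n\|_{L^2}\|\varphi_m\|_{L^2}$, and the continuous embedding $M^1_{\nu_s}(\Rd)\hookrightarrow M^2_1(\Rd) = L^2(\Rd)$ from Proposition~\ref{Mpprop}~(b),(c), one obtains
$$\|T\pi(z)T^*\| \;\le\; \sum_{n,m}|V_{\psi_m}\psi_n(z)|\,\|\varphi_n\|_{L^2}\|\varphi_m\|_{L^2} \;\lesssim\; \sum_{n,m}|V_{\psi_m}\psi_n(z)|\,\|\varphi_n\|_{M^1_{\nu_s}}\|\varphi_m\|_{M^1_{\nu_s}}.$$
The remaining, and only genuinely analytic, point is the pointwise weighted STFT estimate $(1+|z|)^s\,|V_g f(z)| \lesssim \|f\|_{M^1_{\nu_s}}\|g\|_{M^1_{\nu_s}}$ for $f,g\in M^1_{\nu_s}(\Rd)$. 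I would prove this by combining the Moyal reproducing formula with window $g_0$, the covariance identity $|V_{g_0}(\pi(z)g)(w)| = |V_{g_0}g(w-z)|$, the submultiplicativity and symmetry of $\nu_s$, Young's inequality $L^1\ast L^\infty\subseteq L^\infty$, and once more the embedding $M^1_{\nu_s}\hookrightarrow M^\infty_{\nu_s}$ (again Proposition~\ref{Mpprop}~(b)). Feeding this back with $f = \psi_n$, $g = \psi_m$ leaves $\|T\pi(z)T^*\| \lesssim (1+|z|)^{-s}\big(\sum_n\|\varphi_n\|_{M^1_{\nu_s}}\|\psi_n\|_{M^1_{\nu_s}}\big)^2 < \infty$, which is the desired bound.

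I expect the main obstacle to be exactly that last pointwise STFT estimate: this is where the hypothesis $T\in\mathcal{B}_{\nu_s\otimes\nu_s}$ (i.e.\ that the windows $\varphi_n,\psi_n$ live in $M^1_{\nu_s}$) is actually used, and one must be careful about the time-frequency shift cocycle, about the STFT convention and the reflection that appears in the convolution relation, and about checking that all weights in play are $\nu_s$-moderate so that Proposition~\ref{Mpprop} applies. By contrast, the manipulations with $*$-homomorphisms, unitaries and absolutely convergent rank-one sums are purely formal.
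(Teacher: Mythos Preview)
Your proposal is correct and follows essentially the same approach as the paper: rank-one decomposition of $T$ from Proposition~\ref{beautifuloperatorsprop}, the pointwise STFT convolution estimate $|V_{\psi_m}\psi_n| \le |V_{g_0}\psi_n|\ast|V_{\psi_m}g_0|$ combined with weighted Young's inequality and the embedding $M^1_{\nu_s}\hookrightarrow M^\infty_{\nu_s}$, and the final appeal to Proposition~\ref{locgBessel}. The only cosmetic difference is that you first reduce $\|\alpha_k(T)\alpha_l(T)^*\|$ to $\|T\pi(l-k)T^*\|$ via unitarity, whereas the paper carries the indices $k,l$ throughout and invokes the STFT covariance property at the corresponding step; the ingredients and the logic are otherwise identical.
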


\begin{proof}
If $T\in \mathcal{B}_{\nu_s \otimes \nu_s}$, then, by Proposition \ref{beautifuloperatorsprop}, also $T^*\in \mathcal{B}_{\nu_s \otimes \nu_s}$ and there exist sequences $\lbrace \varphi_n\rbrace_{n\in \mathbb{N}}, \lbrace \psi_n\rbrace_{n\in \mathbb{N}}$ in $M^1_{\nu_s}(\mathbb{R}^d)$ with $\sum_{n\in \mathbb{N}} \Vert \varphi_n \Vert_{M^1_{\nu_s}} \Vert \psi_n \Vert_{M^1_{\nu_s}} < \infty$ such that $T = \sum_{n\in \mathbb{N}} \varphi_n \otimes \psi_n$ and $T^* = \sum_{n\in \mathbb{N}} \psi_n \otimes \varphi_n$. We first show that there exists some constant $C>0$ such that
$$\Vert [G_{\G}]_{k,l} \Vert \leq C (1+\vert k - l \vert)^{-s} \qquad (\forall k, l \in X).$$
Observe that $[G_{\G}]_{k, l} = \alpha_{k}(T) \alpha_{l}(T)^* = \pi(k) T\pi(k)^* \pi(l) T^* \pi(l)^*$. Then for arbitrary $f\in L^2(\mathbb{R}^d)$ we have
\begin{flalign}
    [G_{\G}]_{k, l} f &=  \pi(k) \left( \sum_{m\in \mathbb{N}} \varphi_m \otimes \psi_m \right) \pi(k)^* \pi(l) \left(\sum_{n\in \mathbb{N}} \psi_n \otimes \varphi_n \right) \pi(l)^* f \notag \\
    &= \left( \sum_{m\in \mathbb{N}} (\pi(k)\varphi_m) \otimes (\pi(k) \psi_m) \right) \left(\sum_{n\in \mathbb{N}} (\pi(l)\psi_n) \otimes (\pi(l)\varphi_n) \right) f \notag \\
    &= \left( \sum_{m\in \mathbb{N}} (\pi(k)\varphi_m) \otimes (\pi(k) \psi_m) \right) \sum_{n\in \mathbb{N}} \langle f , \pi(l)\varphi_n \rangle \pi(l)\psi_n \notag \\
    &= \sum_{m\in \mathbb{N}} \sum_{n\in \mathbb{N}} \langle f , \pi(l)\varphi_n \rangle \langle  \pi(l) \psi_n , \pi(k) \psi_m) \rangle \pi(k) \varphi_m .\notag
\end{flalign}
This implies that 
\begin{flalign}
    \Vert [G_{\G}]_{k, l} f \Vert_{L^2} &\leq \sum_{m\in \mathbb{N}} \sum_{n\in \mathbb{N}} \Vert f \Vert_{L^2} \Vert \varphi_n \Vert_{L^2} \Vert \varphi_m \Vert_{L^2} \vert V_{\psi_m} (\pi(l)\psi_n) (k) \vert, \notag \\
    &\leq C_0^2\sum_{m\in \mathbb{N}} \sum_{n\in \mathbb{N}} \Vert f \Vert_{L^2} \Vert \varphi_n \Vert_{M^1_{\nu_s}} \Vert \varphi_m \Vert_{M^1_{\nu_s}} \vert V_{\psi_m} \psi_n (k-l) \vert, \notag
\end{flalign}
where, in the latter estimate, we used the covariance property of the STFT \cite{gr01} and the continuous embedding $M^1_{\nu_s}(\mathbb{R}^d) \hookrightarrow L^2(\mathbb{R}^d)$ (see Proposition \ref{Mpprop}).
Since $\Vert g_0\Vert_{L^2(\mathbb{R}^d)}=1$, the STFT satisfies the pointwise estimate 
$$\vert V_{\psi_m} \psi_n \vert \leq \frac{1}{\Vert g_0 \Vert_{L^2(\mathbb{R}^d)}^2} \vert V_{g_0} \psi_n \vert \ast \vert V_{\psi_m} g_0 \vert = \vert V_{g_0} \psi_n \vert \ast \vert V_{\psi_m} g_0 \vert$$ 
(see \cite[Theorem 11.3.7]{gr01}). Hence, we may apply Young's inequality for weighted $L^p$-spaces \cite[Proposition 11.1.3]{gr01} and obtain that
$$\vert V_{\psi_m} \psi_n (k - l) \vert \nu_s(k - l) \leq \Vert V_{\psi_m} \psi_n \Vert_{L^{\infty}_{\nu_s}} \leq \Vert V_{g_0} \psi_n \Vert_{L^{\infty}_{\nu_s}} \, \Vert V_{\psi_m} g_0 \Vert_{L^1_{\nu_s}}.$$
Since $\nu_s$ is a symmetric weight it holds $\Vert V_{\psi_m} g_0 \Vert_{L^1_{\nu_s}} = \Vert V_{g_0} \psi_m \Vert_{L^1_{\nu_s}}$. Together with the continuous embedding $M^1_{\nu_s}(\mathbb{R}^d) \hookrightarrow M^{\infty}_{\nu_s}(\mathbb{R}^d)$ this yields
\begin{flalign}
  \vert V_{\psi_m} \psi_n (k - l) \vert \nu_s(k - l) 
  &\leq \Vert V_{g_0} \psi_n \Vert_{L^{\infty}_{\nu_s}} \, \Vert V_{g_0} \psi_m\Vert_{L^1_{\nu_s}} \notag \\ 
  &= \Vert \psi_n \Vert_{M^{\infty}_{\nu_s}} \, \Vert \psi_m \Vert_{M^1_{\nu_s}} \notag \\
  &\leq C_1 \Vert \psi_n \Vert_{M^1_{\nu_s}} \, \Vert \psi_m \Vert_{M^1_{\nu_s}} . \notag
\end{flalign}
Altogether, we see that
\begin{flalign}
    \Vert [G_{\G}]_{k, l} \Vert &\leq C_0^2\sum_{m\in \mathbb{N}} \sum_{n\in \mathbb{N}} \Vert \varphi_n \Vert_{M^1_{\nu_s}} \Vert \varphi_m \Vert_{M^1_{\nu_s}} \vert V_{\psi_m} \psi_n (k - l) \vert \notag \\
    &\leq \frac{C_0^2 C_1}{\nu_s(k-l)} \sum_{m\in \mathbb{N}} \sum_{n\in \mathbb{N}} \Vert \varphi_n \Vert_{M^1_{\nu_s}} \Vert \varphi_m \Vert_{M^1_{\nu_s}} \Vert \psi_n \Vert_{M^1_{\nu_s}} \Vert \psi_m \Vert_{M^1_{\nu_s}}  \notag \\
    &= \frac{C_0^2 C_1}{\nu_s(k-l)} \left(\sum_{m\in \mathbb{N}} \Vert \varphi_m \Vert_{M^1_{\nu_s}} \Vert \psi_m \Vert_{M^1_{\nu_s}} \right) \left( \sum_{n\in \mathbb{N}}  \Vert \varphi_n \Vert_{M^1_{\nu_s}}  \Vert \psi_n \Vert_{M^1_{\nu_s}} \right) \notag \\
    &= C(1+\vert k - l \vert)^{-s} . \notag
\end{flalign}
Since $k,l\in X$ were arbitrary, the latter implies that $\G(T,X)$ is intrinsically $\J_s$-localized. By Proposition \ref{locgBessel}, $\G(T,X)$ is therefore a g-Bessel sequence.
\end{proof}

In particular, if a Gabor g-system as above is a g-frame for $L^2(\mathbb{R}^d)$, we may apply our machinery established so far and obtain further results. We note that sufficiency conditions for such Gabor g-systems over a lattice being a frame were given in \cite{skret2020}. 

\begin{theorem}\label{maingaborg}
Let $X\subset \mathbb{R}^{2d}$ be relatively separated, $s>2d+r$ for some $r\geq 0$, $T\in \mathcal{B}_{\nu_s \otimes \nu_s}$, and $m$ be a $\nu_r$-moderate weight. If $\G=\G(T,X)$ constitutes a Gabor g-frame for $L^2(\Rd)$, then $\G$ and its canonical dual $\widetilde{\G}$ are intrinsically and mutually $\J_s$-localized. In other words, there exist constants $C_1, C_2, C_3 >0$ such that
\begin{flalign}\label{3ineqs}
   \Vert T\pi(k)^* \pi(l) T^* \Vert &\leq C_1 (1+\vert k - l \vert)^{-s} \qquad (\forall k, l \in X) \notag \\
   \Vert T\pi(k)^* S_{\G}^{-1} \pi(l) T^* \Vert &\leq C_2 (1+\vert k - l \vert)^{-s} \qquad (\forall k, l \in X) \notag \\
   \Vert T\pi(k)^* S_{\G}^{-2} \pi(l) T^* \Vert &\leq C_3 (1+\vert k - l \vert)^{-s} \qquad (\forall k, l \in X).
\end{flalign}
Furthermore, for each $p\in (\frac{2d}{s-r}, \infty]\cup \lbrace 0\rbrace$, the space $\Hil_m^{p}(\widetilde{\G},\G)$ is a well-defined (quasi-) Banach space and it holds
$$f = \sum_{k\in X} \pi(k)T^*T \pi(k)^* S_{\G}^{-1} f = \sum_{k\in X} S_{\G}^{-1} \pi(k)T^*T \pi(k)^* f \qquad (\forall f\in \Hil_{m}^p(\widetilde{\G},\G))$$ 
with unconditional convergence in $\Hil_m^p(\widetilde{\G},\G)$ for $p\in (\frac{2d}{s-r}, \infty)\cup \lbrace 0\rbrace$ and unconditional convergence in the $\sigma(\Hil, \Hil^{00}(\widetilde{\G}))$-topology in the case $p=\infty$ respectively.
\end{theorem}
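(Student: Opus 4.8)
The plan is to deduce the whole statement from Theorem~\ref{polynomiallylocalizedgframe} together with the abstract results on localized g-frames established earlier, so essentially no new estimate is required. First I would apply Theorem~\ref{polynomiallylocalizedgframe}: since $s>2d+r\geq 2d$ and $T\in\mathcal{B}_{\nu_s\otimes\nu_s}$, the g-Gabor system $\G=\G(T,X)$ is intrinsically $\J_s$-localized, that is, $G_{\G}\in\J_s(X)$. By hypothesis $\G$ is moreover a g-frame for $L^2(\Rd)$, so Theorem~\ref{duallocalized} applies and yields that the canonical dual g-frame $\widetilde{\G}=(\alpha_k(T)S_{\G}^{-1})_{k\in X}$ is intrinsically $\J_s$-localized and that $\G$ and $\widetilde{\G}$ are mutually $\J_s$-localized, i.e. $G_{\widetilde{\G}}\in\J_s$ and $G_{\G,\widetilde{\G}}\in\J_s$. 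This settles the first assertion.

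Next I would translate the three estimates in (\ref{3ineqs}) into these membership statements. Writing $T_k=\alpha_k(T)=\pi(k)T\pi(k)^*$ and using that $\pi(k)$ is unitary together with the self-adjointness of $S_{\G}^{-1}$, one computes
$$[G_{\G}]_{k,l}=T_kT_l^*=\pi(k)\big(T\pi(k)^*\pi(l)T^*\big)\pi(l)^*,\qquad [G_{\G,\widetilde{\G}}]_{k,l}=T_k\widetilde{T}_l^*=\pi(k)\big(T\pi(k)^*S_{\G}^{-1}\pi(l)T^*\big)\pi(l)^*,$$
and $[G_{\widetilde{\G}}]_{k,l}=\widetilde{T}_k\widetilde{T}_l^*=\pi(k)\big(T\pi(k)^*S_{\G}^{-2}\pi(l)T^*\big)\pi(l)^*$. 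Since conjugation by unitaries preserves the operator norm, the three quantities on the left-hand sides of (\ref{3ineqs}) equal $\Vert[G_{\G}]_{k,l}\Vert$, $\Vert[G_{\G,\widetilde{\G}}]_{k,l}\Vert$ and $\Vert[G_{\widetilde{\G}}]_{k,l}\Vert$, respectively. As all three $\B(\Hil)$-valued matrices belong to $\J_s(X)$, the definition of the Jaffard algebra in Example~\ref{spectralexamples}~(1.) immediately produces constants $C_1,C_2,C_3>0$ with the asserted polynomial decay.

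For the statements about the spaces $\Hil_m^p(\widetilde{\G},\G)$ I would set $\A=\J_s$ and $\omega=m$, and invoke Example~\ref{admissibleex}~(1.) read with ambient dimension $2d$ (since $X\subset\mathbb{R}^{2d}$): because $s>2d+r$, every $\nu_r$-moderate weight $m$ is $(\J_s,\tfrac{2d}{s-r})$-admissible, and here $\tfrac{2d}{s-r}<1$ since $s-r>2d\geq 1$. Thus $\G$ is an intrinsically $\J_s$-localized g-frame and $m$ is an $(\J_s,p_0)$-admissible weight with $p_0=\tfrac{2d}{s-r}<1$, so Corollary~\ref{canonicalduallocalized} (together with Corollary~\ref{isometrycor}) applies with this $p_0$: for every $p\in(\tfrac{2d}{s-r},\infty]\cup\{0\}$, which is contained in $(p_0,\infty]\cup\{0,1\}$, the space $\Hil_m^p(\widetilde{\G},\G)$ is a well-defined Banach space (a quasi-Banach space when $p_0<p<1$), and the identity $f=\sum_{k\in X}T_k^*T_kS_{\G}^{-1}f=\sum_{k\in X}S_{\G}^{-1}T_k^*T_kf$ holds for all $f\in\Hil_m^p(\widetilde{\G},\G)$ with unconditional convergence in $\Hil_m^p(\widetilde{\G},\G)$ for $p<\infty$ and in the $\sigma(\Hil,\Hil^{00}(\widetilde{\G}))$-topology for $p=\infty$. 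Substituting $T_k^*T_k=\pi(k)T^*\pi(k)^*\pi(k)T\pi(k)^*=\pi(k)T^*T\pi(k)^*$ rewrites this as the displayed reconstruction formula, completing the proof.

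I do not expect a genuine obstacle here: the analytically substantial part, namely the short-time Fourier transform estimate yielding the polynomial off-diagonal decay of $G_{\G}$, was already carried out in Theorem~\ref{polynomiallylocalizedgframe}, and Theorems~\ref{duallocalized} and~\ref{canonicalduallocalized} are precisely tailored to transport that decay to the dual system and to the associated (quasi-)Banach spaces. The only points demanding a little attention are bookkeeping ones: using the ambient dimension $2d$ (not $d$) when quoting the admissibility example, verifying $\tfrac{2d}{s-r}<1$ so that the Banach endpoint $p=1$ and indeed the whole stated index range are admissible, and the elementary unitary-conjugation identities that put the g-Gram entries into the precise shape appearing in (\ref{3ineqs}).
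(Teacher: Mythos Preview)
Your proposal is correct and follows essentially the same approach as the paper: apply Theorem~\ref{polynomiallylocalizedgframe} for intrinsic $\J_s$-localization of $\G$, invoke Theorem~\ref{duallocalized} for the dual, use Example~\ref{admissibleex}~(1.) (in ambient dimension $2d$) for admissibility of $m$, and conclude via Corollary~\ref{canonicalduallocalized}. Your version is in fact slightly more explicit than the paper's, spelling out the unitary-conjugation identities behind (\ref{3ineqs}), the verification that $p_0=\tfrac{2d}{s-r}<1$, and the substitution $T_k^*T_k=\pi(k)T^*T\pi(k)^*$.
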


\begin{proof}
Since $s>2d$, $\J_s$ is a spectral algebra by Example \ref{spectralexamples} (1.). Furthermore, since $T\in \mathcal{B}_{\nu_s \otimes \nu_s}$, Theorem \ref{polynomiallylocalizedgframe} guarantees that $\G$ is intrinsically $\J_s$-localized, which is equivalent to the first inequality in (\ref{3ineqs}). If $\widetilde{\G}$ denotes the canonical dual g-frame of $\G$, then Theorem \ref{duallocalized} implies that $\widetilde{\G} \sim_{\J_s} \G$ and $\widetilde{\G} \sim_{\J_s} \widetilde{\G}$, which is equivalent to the second and third inequality in (\ref{3ineqs}) respectively. Moreover, the assumptions on $m$ and Example \ref{admissibleex} (1.) imply that $m$ is an $(\J_s, \frac{2d}{s-r})$-admissible weight. Hence, an application of Corollary \ref{canonicalduallocalized} yields the remaining part of the theorem.   
\end{proof}

\begin{remark}
We expect that the modulation spaces $M_{\nu_s \otimes \nu_s}^p$ considered in \cite{skret2020} coincide (up to norm-equivalence) with the co-orbit spaces $\Hil_{\nu_s}^p(\widetilde{\G},\G)$ appearing in the latter theorem. Similarly, we expect that the co-orbit spaces of operators discussed in \cite{dölumnsk23} can be identified with co-orbit spaces associated with certain localized g-frames. We leave a more detailed investigation on this matter to future works.
\end{remark}

\section*{Acknowledgments}

The authors thank Daniel Freeman, Karlheinz Gröchenig, Jakob Holböck, Ilya Krishtal, Franz Luef and Henry McNulty  for their valuable suggestions and related discussions. 

This research was funded in whole or in part by the Austrian Science Fund (FWF) 10.55776/P34624. For open access purposes, the author has applied a CC BY public copyright license to any author accepted manuscript version arising from this submission.

\bibliographystyle{abbrv}

\bibliography{biblioall}
\end{document}